\Crefname{property}{Property}{Properties}
\Crefname{observation}{Observation}{Observations}
\Crefname{theorem}{Theorem}{Theorems}
\Crefname{section}{Section}{Sections}
\Crefname{figure}{Figure}{Figures}
\Crefname{enumi}{Case}{Cases} 
\definecolor{defblue}{rgb}{0.121,0.47,0.705}
\definecolor{orangered}{rgb}{1,0.271,0}
\DeclareTextFontCommand{\emph}{\color{defblue}\em}
\definecolor{lipicsblue}{rgb}{0.08235294118,0.3098039216,0.537254902}
\newcommand\I{\mathcal I}
\title{Large Induced Subgraphs of Bounded Degree in Outerplanar and Planar Graphs} 
\author{Marco {D'Elia}}{Roma Tre University, Rome, Italy}{marco.delia@uniroma3.it}{https://orcid.org/0009-0008-6266-3324}{}
\author{Fabrizio Frati}{Roma Tre University, Rome, Italy}{fabrizio.frati@uniroma3.it}{https://orcid.org/0000-0001-5987-8713}{}
\authorrunning{M. {D'Elia} and F. Frati}
\keywords{Induced graphs, planar graphs, outerplanar graphs, degree} 
\begin{document}

\maketitle

\begin{abstract} 
In this paper, we study the following question. Let $\mathcal G$ be a family of planar graphs and let $k\geq 3$ be an integer. What is the largest value $f_k(n)$ such that every $n$-vertex graph in $\mathcal G$ has an induced subgraph with degree at most $k$ and with $f_k(n)$ vertices? Similar questions, in which one seeks a large induced forest, or a large induced linear forest, or a large induced $d$-degenerate graph, rather than a large induced graph of bounded degree, have been studied for decades and have given rise to some of the most fascinating and elusive conjectures in Graph Theory. We tackle our problem when~$\mathcal G$ is the class of the outerplanar graphs or the class of the planar graphs. In both cases, we provide upper and lower bounds on the value of $f_k(n)$. For example, we prove that every $n$-vertex planar graph has an induced subgraph with degree at most $3$ and with $\frac{5n}{13}>0.384n$ vertices, and that there exist $n$-vertex planar graphs whose largest induced subgraph with degree at most $3$ has $\frac{4n}{7}+O(1)<0.572n+O(1)$ vertices. 
\end{abstract}

\section{Introduction}\label{se:introduction}
The study of induced subgraphs has a central role in Graph Theory. As a prominent example, establishing the truth of the Albertson and Berman conjecture~\cite{DBLP:journals/gc/AkiyamaW87,albertson1979conjecture}, stating that every $n$-vertex planar graph contains an induced forest with at least $\frac{n}{2}$ vertices, is one of the most famous and frequently mentioned (see, e.g., \cite{DBLP:journals/jgt/AngeliniEFG16,DBLP:journals/gc/BorradaileLS17,DBLP:journals/dam/DrossMP19,DBLP:journals/dm/DrossMP19,DBLP:journals/combinatorics/LukotkaMZ15,DBLP:journals/dmtcs/KowalikLS10,DBLP:journals/gc/Le18,DBLP:journals/gc/Pelsmajer04,DBLP:journals/gc/Salavatipour06,DBLP:journals/jgt/ShiX17,wxy-ifbpg-17}) open problems in Graph Theory. The conjectured bound $\frac{n}{2}$ is the best possible~\cite{DBLP:journals/gc/AkiyamaW87}. The best known lower bound is $\frac{2n}{5}$, which comes from the acyclic $5$-colorability of planar graphs~\cite{borodin1976proof}. Other popular related conjectures state that every $n$-vertex bipartite planar graph has an induced forest with at least $\frac{5n}{8}$ vertices~\cite{DBLP:journals/gc/AkiyamaW87} and that every $n$-vertex planar graph has an induced linear forest with at least $\frac{4n}{9}$ vertices~\cite{DBLP:journals/gc/Pelsmajer04}. In the former case, the best known lower bound is $\lceil \frac{4n+3}{7}\rceil$~\cite{wxy-ifbpg-17}, while in the latter case it is $\lceil\frac{n}{3}\rceil$~\cite{DBLP:journals/jgt/Poh90}. 

Combinatorial problems on induced graphs usually have the following form: Given a class~$\mathcal G$ of $n$-vertex graphs, what is the largest function $f(n)$ such that every graph in $\mathcal G$ has an induced subgraph with $f(n)$ vertices and satisfying certain properties? Since in a clique one cannot find any induced subgraph other than a clique, the class $\mathcal G$ usually contains sparse graphs -- most often $\mathcal G$ is the class of the $n$-vertex planar graphs or a subclass of it. More specifically, research on this topic has addressed the problem of determining the maximum size of: (i) an induced forest that one is guaranteed to find in a planar graph~\cite{DBLP:journals/gc/AkiyamaW87,albertson1979conjecture,DBLP:journals/jgt/ShiX17}, in an outerplanar graph~\cite{hosono1990induced}, in a $2$-outerplanar graph~\cite{DBLP:journals/gc/BorradaileLS17}, in a planar graph with given girth~\cite{DBLP:journals/jgt/AlonMT01,DBLP:journals/dam/DrossMP19,DBLP:journals/ipl/FertinGR02,DBLP:journals/dmtcs/KowalikLS10,DBLP:journals/gc/Le18,DBLP:journals/gc/Salavatipour06}, in a graph with bounded treewidth~\cite{DBLP:journals/combinatorics/ChappellP13}, or in a cubic graph~\cite{bhs-lb-87,DBLP:journals/jgt/KellyL18,DBLP:journals/dm/Staton84}; (ii) an induced linear forest that one is guaranteed to find in a planar graph~\cite{DBLP:journals/gc/Pelsmajer04,DBLP:journals/jgt/Poh90}, in an outerplanar graph~\cite{DBLP:journals/gc/Pelsmajer04}, or in a planar graph with given girth~\cite{DBLP:journals/dm/DrossMP19}; (iii) a $k$-degenerate induced graph that one is guaranteed to find in a planar graph~\cite{DBLP:journals/combinatorics/DvorakK18,DBLP:journals/jgt/GuKOQZ22,DBLP:journals/combinatorics/LukotkaMZ15}; and (iv) an induced matching that one is guaranteed to find in a planar graph with degree at most $3$~\cite{DBLP:journals/siamdm/KangMM12}.


In this paper, we study the maximum size of an induced graph of bounded degree that one is guaranteed to find in an outerplanar graph or in a planar graph. It was surprising to us, given that the degree is one of the most important, natural, and used graph parameters, that this problem has not been studied systematically before, to the best of our knowledge. A set of vertices which induces a graph of degree at most $k$ in a graph $G$ is sometimes called a \emph{$k$-stable set}~\cite{DBLP:journals/combinatorics/FountoulakisKM10}. Our results are as follows.
\begin{itemize}
	\item We show that every $n$-vertex outerplanar graph has an induced subgraph with degree at most $3$ and with at least $\frac{2}{3}n$ vertices, an induced subgraph with degree at most $k$ and with at least $\frac{2k+1}{2k+5}n$ vertices, for any even integer $k\geq 4$, and an induced subgraph with degree at most $k$ and with at least $\frac{2k+2/3}{2k+14/3}n$ vertices, for any odd integer $k\geq 5$. On the other hand, for any integer $k\geq 3$, there exists an $n$-vertex outerplanar graph, in fact an $n$-vertex outerpath, such that any induced subgraph with degree at most $k$ has at most $\frac{k-1}{k}n + O(1)$ vertices. The last bound is tight for the class of the $n$-vertex outerpaths.
	\item We show that every $n$-vertex planar graph has an induced subgraph with degree at most~$3$ and with at least $\frac{5}{13}n$ vertices, an induced subgraph with degree at most $4$ and with at least $\frac{27}{59}n$ vertices, and, for any integer $k\geq 5$, an induced subgraph with degree at most $k$ and with at least $\frac{k-2}{k+1}n$ vertices. These results actually follow from bounds with respect to the number of vertices {\em and edges} for (not necessarily planar) graphs. On the other hand, for any integer $k\geq 3$, there exists an $n$-vertex planar graph such that any induced subgraph with degree at most $k$ has at most $\frac{2k-2}{2k+1}n + O(1)$ vertices and an $n$-vertex planar graph such that any induced subgraph with degree at most $k$ has at most~$\frac{k+2}{k+4}n + O(1)$~vertices.  
\end{itemize}

\cref{ta:degree-3} shows the coefficients of the linear terms in the size of the induced subgraphs of degree at most $3$ that we can find in the considered graph classes.

\begin{table}[]
	\centering
	\begin{tabular}{|r||c|c|c|c|}
		\hline
		& Forests & Outerplanar graphs & Planar graphs\\ 
		\hline
		LB & $0.75$~\cite{DBLP:journals/combinatorics/ChappellP13} & $0.\overline{6}$~[\cref{th:outerplanar-lb-3}] & $>0.384$~[\cref{th:planar-lb}] \\ \hline
		UB & $0.75$~\cite{DBLP:journals/combinatorics/ChappellP13}  & $0.\overline{6}$~[\cref{th:outerplanar-ub}] & $<0.572$~[\cref{th:planar-ub}] \\ \hline
	\end{tabular}
	\caption{Summary of the best known coefficients of the linear term for $k=3$.}
	\label{ta:degree-3}
\vspace{-7mm}
\end{table}

{\bf Related work.} Although, as far as we know, the question of determining the maximum size of an induced subgraph of bounded degree that one is guaranteed to find in an outerplanar graph or in a planar graph was not addressed explicitly before, some results from the literature have an implication or a relationship to our problem. Interestingly, linear lower bounds on the size of such subgraphs can be derived from known results on different graph parameters. However, our lower bounds have larger multiplicative coefficient of the linear term; making this coefficient as large as possible is the typical objective in this research field, think again about the Albertson and Berman conjecture~\cite{DBLP:journals/gc/AkiyamaW87,albertson1979conjecture} and the related results mentioned above.

\begin{itemize}
\item For $k=0$, a $k$-stable set is just called \emph{stable set} or \emph{independent set}; the literature about finding large independent sets in planar graphs is one among the richest in the entire field of graph theory, see, e.g.~\cite{ALBERTSON197684,appel1976every1,appel1976every2,DBLP:journals/mst/BiedlW05,caro1985vertex,DBLP:journals/jct/RobertsonSST97}. In particular, a consequence of the four-color theorem is that every planar graph admits an independent set with $\frac{n}{4}$ vertices. This defines an induced graph whose degree is bounded by $k$, for any $k\geq 0$. 

\item Fountoulakis, Kang, and McDiarmid~\cite{DBLP:journals/combinatorics/FountoulakisKM10} studied the maximum size of an induced subgraph of bounded degree that one is expected to find in a random graph. 

\item A \emph{$(c,k)$-defective coloring} of a graph is a coloring of the vertices with $c$ colors such that each vertex $v$ has at most $k$ neighbors with the same color as $v$. Since the vertices in each color class induce a graph whose degree is bounded by $k$, the existence of a $(c,k)$-defective coloring for a graph $G$ implies the existence of a set of~$\frac{n}{c}$ vertices that induce a subgraph of~$G$ with degree at most $k$. Cowen, Cowen, and Woodall~\cite{DBLP:journals/jgt/CowenCW86} proved that every outerplanar graph has a $(2,2)$-defective coloring and that every planar graph has a $(3,2)$-defective coloring. These results imply the existence of sets with $\frac{n}{2}$ and $\frac{n}{3}$ of vertices in an $n$-vertex outerplanar or planar graph, respectively, that induce graphs with degree at most $2$. Our lower bounds are larger than these values, already for $k=3$ in which they are $\frac{2}{3}n$ and $\frac{5}{13}n$, respectively. See~\cite{DBLP:journals/combinatorics/wood18} for many more results on $(c,k)$-defective colorings.

\item Linear forests have degree at most $2$, hence the size of an induced linear forest that one is guaranteed to find in an $n$-vertex outerplanar graph (the optimal size $\lceil \frac{4n+2}{7}\rceil$ was established by Pelsmajer~\cite{DBLP:journals/gc/Pelsmajer04}) or in an $n$-vertex planar graph (the best known lower bound $\lceil\frac{n}{3}\rceil$ is due to Poh~\cite{DBLP:journals/jgt/Poh90}) is a lower bound for the size of an induced graph with degree at most $k$, for any integer $k\geq 3$, that one is guaranteed to find in an $n$-vertex outerplanar or planar graph, respectively. Our lower bounds are larger than these values, already for $k=3$ in which they are $\frac{2}{3}n$ and $\frac{5}{13}n$, respectively.
 
\item Chappell and Pelsmajer~\cite{DBLP:journals/combinatorics/ChappellP13} proved that every $n$-vertex graph with treewidth $t$ contains an induced {\em forest} with degree at most $k$ and with at least $\lceil \frac{2kn +2}{tk+k+1} \rceil$ vertices. This provides a lower bound of $\lceil \frac{2kn +2}{3k+1} \rceil$ on the size of an induced subgraph with degree at most $k$ that one is guaranteed to find in an $n$-vertex outerplanar graph. Our lower bounds are larger than $\lceil \frac{2kn +2}{3k+1} \rceil$ for any $k\geq 3$ (and sufficiently large $n$).

\item Chappell and Pelsmajer~\cite{DBLP:journals/combinatorics/ChappellP13} also proved (and attributed the result to Chappell, Gimbel, and Hartman) that every $n$-vertex forest contains an induced forest with degree at most $k$ and with at least $\lceil \frac{k+1}{k+2}n \rceil$ vertices. This bound is the best possible.  

\item It can be proved by edge-counting arguments that sparse graphs, like planar graphs, have many vertices (and hence large induced subgraphs) with bounded degree. The linear lower bounds one gets in this way are worse than those of our paper, see, e.g.,~\cite{DBLP:journals/siamcomp/Kirkpatrick83,DBLP:conf/esa/SnoeyinkK97}. In the same spirit, Bose, Dujmovi\'c, and Wood~\cite{DBLP:journals/cdm/BoseDW06} considered the following problem. Let~$\mathcal G_{n,w}$ be the family of the $n$-vertex graphs with treewidth $w$, where $n\geq 2w+1$. For given integers $0\leq t\leq w$ and $k\geq 2w$, what is the largest size of an induced graph that one is guaranteed to find in every graph $G$ in $\mathcal G_{n,w}$ such that the treewidth of the induced graph is at most $t$ and the degree of its vertices in $G$ is at most $k$? They provided a bound for this problem which implies that every $n$-vertex outerplanar graph $G$ (in fact, every graph with treewidth~$2$) has a subset of at least $\frac{k-3}{k-1}n$ vertices whose degree in $G$ is at most $k$. This lower bound is weaker than our lower bounds for every integer $k\geq 3$. 

\item Knauer and Ueckerdt~\cite{DBLP:journals/corr/abs-2303-13655} considered the following problem. For given integers $w\geq 1$ and $c\geq 1$, what is the largest size of an induced graph that one is guaranteed to find in every graph of treewidth at most $w$ such that each connected component of the induced graph has at most $c$ vertices? They provided a $\frac{c}{w+c+1}n$ lower bound and a $\frac{c}{w+c}n$ upper bound for this problem. Their results imply (by setting $c=k+1$) a $\frac{k+1}{w+k+2}n$ lower bound for the size of an induced graph of degree at most $k$ that one is guaranteed to find in every graph of treewidth at most $w$. In particular, for outerplanar graphs this yields a $\frac{k+1}{k+4}n$ lower bound, which is weaker than our lower bounds for every integer $k\geq 3$.

\item Finally, as mentioned above, the size of $k$-degenerate subgraphs in planar graphs, for $k=2$~\cite{DBLP:journals/combinatorics/DvorakK18}, $k=3$~\cite{DBLP:journals/jgt/GuKOQZ22}, and $k=4$~\cite{DBLP:journals/combinatorics/LukotkaMZ15} has been studied. A lower bound on the size of an induced subgraph with degree at most $k$ is also a lower bound on the size of a $k$-degenerate induced subgraph, and an upper bound for the latter size is an upper bound for the former one. The upper and lower bounds in~\cite{DBLP:journals/jgt/GuKOQZ22,DBLP:journals/combinatorics/LukotkaMZ15} are in fact larger than ours. 
\end{itemize}

The rest of the paper is organized as follows. In \cref{se:preliminaries} we give some preliminaries; in \cref{se:outerplanar} and \cref{se:planar},  we present our results for outerplanar graphs and planar graphs, respectively; finally, in \cref{se:conclusions}, we conclude and present some open problems. 




\section{Preliminaries}\label{se:preliminaries}
We use standard terminology in Graph Theory \cite{Diestelbook}. For a graph $G$, we let $V(G)$ and $E(G)$ be its vertex and edge sets, respectively. For a set $\I \subseteq V(G)$, we denote by $G[\I]$ the \emph{induced subgraph} of $G$, that is, the graph whose vertex set is $\I$ and whose edge set consists of every edge $uv\in E(G)$ such that $u,v\in \I$. The \emph{degree of a vertex} $v$ is the number of its incident edges, and the \emph{degree of a graph} is the maximum degree of any of its vertices. A set $\mathcal{D} \subseteq V(G)$ is a \emph{dominating set} if every vertex of $G$ is in $\mathcal{D}$ or is adjacent to a vertex in $\mathcal D$.

A \emph{drawing} of a graph maps each vertex to a point in the plane and each edge to a Jordan arc between the points representing the end-vertices of the edge. A drawing is \emph{planar} if no two edges cross, except at common end-vertices. A \emph{planar graph} is a graph that admits a planar drawing. A planar drawing partitions the plane into connected regions, called \emph{faces}; the unique unbounded face is the \emph{outer face}, while the other faces are \emph{internal}. 
%
%
A planar drawing is \emph{outerplanar} if every vertex is incident to the outer face. An \emph{outerplanar graph} is a graph that admits an outerplanar drawing. A \emph{maximal outerplanar graph} is an outerplanar graph such that adding any edge to it results in a graph that contains a multiple edge or is not outerplanar. Consider a maximal outerplanar graph $G$ and observe that $G$ has a unique outerplanar drawing, up to a reflection and a homeomorphism of the plane. We call \emph{outerplane embedding}, and denote it by $\mathcal O_G$, the topological information associated with such a drawing. Without loss of information, one can think that $\mathcal O_G$ is just an outerplanar drawing of $G$. Observe that the outer face of $\mathcal O_G$ is delimited by a Hamiltonian cycle, while the internal faces are delimited by cycles with $3$ vertices. An edge $uv$ of $G$ splits $G$ into two outerplanar graphs, namely the subgraphs of $G$ that are induced by the vertices encountered when walking in clockwise and counter-clockwise direction along the outer face of $\mathcal O_G$ from $u$ to $v$. These are called \emph{$uv$-split subgraphs}  of $G$. The vertices $u$ and $v$, as well as the edge $uv$, belong to both $uv$-split subgraphs of $G$. If $uv$ is incident to the outer face of $\mathcal O_G$, then one of these subgraphs is just the edge $uv$ and the other one is $G$. We define the \emph{distance} in $\mathcal O_G$ between two distinct edges $uv$ and $xy$ as the number of steps that are needed to reach $xy$ from $uv$, where a step moves an edge $wz$ (initially $wz:=uv$) to an edge that appears together with $wz$ on the boundary of an internal face  of $\mathcal O_G$. Thus, for example, if $uv$ and $xy$ are incident to the same internal face of $\mathcal O_G$, their distance in $\mathcal O_G$ is $1$. The \emph{$xy$-parent} of the edge $uv$ is the edge $rs$ that appears together with $uv$ on the boundary of an internal face  of $\mathcal O_G$ and that is such that the distance between $xy$ and $uv$ in $\mathcal O_G$ is $1$ plus the distance between $xy$ and $rs$ in $\mathcal O_G$. The \emph{weak dual} of $\mathcal O_G$ is the graph with a vertex for each internal face of $\mathcal O_G$ and with an edge between two vertices if the corresponding faces of $\mathcal O_G$ share an edge. The weak dual of $\mathcal O_G$ is a tree with degree at most $3$. We say that $G$ is a \emph{maximal outerpath} if its weak dual is a path. An \emph{outerpath} is a subgraph of a maximal outerpath.

\section{Induced Subgraphs of Outerplanar Graphs} \label{se:outerplanar}

In this section, we consider the size of the largest induced subgraph of degree at most $k$ that one is guaranteed to find in an $n$-vertex outerplanar graph. \cref{ta:outerplanar} shows the coefficients of the linear terms in the size of the induced subgraphs we can find, for various values of $k$.
\begin{table}[htb]
\centering
\begin{tabular}{|r||c|c|c|c|c|}
\hline
   & $k=3$ & $k=4$ & $k=5$ & $k=6$ & $k=7$ \\ \hline
LB & $0.\overline{6}$     & $>0.69$     & $0.\overline{72}$     & $>0.76$   & $>0.78$     \\ \hline
UB & $0.\overline{6}$     & $0.75$     & $0.8$     & $0.8\overline{3}$  & $<0.858$     \\ \hline
\end{tabular}
\caption{Results on outerplanar graphs with different values of $k$.}
\vspace{-7mm}
\label{ta:outerplanar}
\end{table}

We start with an upper bound, which comes from a family of outerpaths, see~\cref{fig:upper-bound-outerpath}.

\begin{figure}
    \centering
    \includegraphics[scale=.7, page=1]{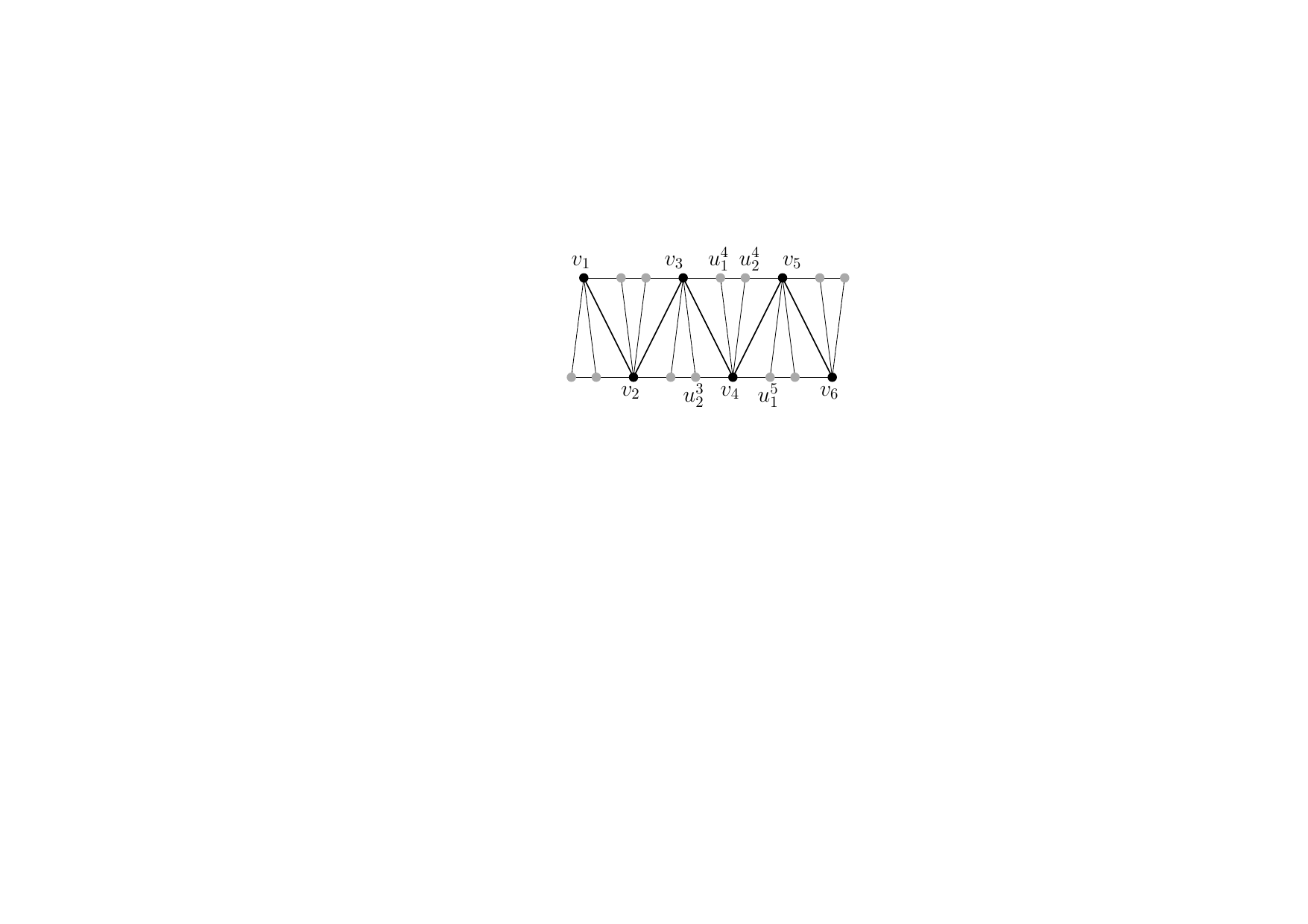}
    \caption{Illustration for~\cref{th:outerplanar-ub}, with $n=18$, $k=3$, and $h=6$.}
    \label{fig:upper-bound-outerpath}
\end{figure}

\begin{theorem} \label{th:outerplanar-ub}
For every $n\geq 1$ and for every integer $k\geq 3$, there exists an $n$-vertex outerpath $G$ with the following property. Consider any set $\mathcal I \subseteq V(G)$ such that $G[\mathcal I]$ has degree at most~$k$. Then $|\mathcal I|\leq \frac{k-1}{k}n+2$.
\end{theorem}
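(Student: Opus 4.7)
The plan is to exhibit an explicit $n$-vertex outerpath $G$ realizing the claimed upper bound. As illustrated in \cref{fig:upper-bound-outerpath} for $n = 18$ and $k = 3$, the construction arranges the vertices as $h := \lfloor n/k \rfloor$ consecutive ``blocks'' of $k$ vertices each along the outer boundary and adds chords so that every block contains a ``hub'' vertex $w$ of degree $k+1$ in $G$ whose $k+1$ neighbors lie essentially within that block. A preliminary verification shows that $G$ is indeed an outerpath: the chord family is non-crossing and the resulting triangulation has a path as its weak dual.

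To prove the bound, I would use a charging argument. Fix any $\mathcal{I} \subseteq V(G)$ such that $G[\mathcal{I}]$ has degree at most $k$. For each of the $h - 2$ interior blocks $B_j$, I would argue that $B_j$ contains at least one vertex not in $\mathcal{I}$: letting $w_j$ be its hub, either $w_j \notin \mathcal{I}$, in which case $w_j$ itself is such a vertex, or $w_j \in \mathcal{I}$, in which case $\deg_G(w_j) = k+1 > k \geq \deg_{G[\mathcal{I}]}(w_j)$ forces at least one neighbor of $w_j$ to be excluded from $\mathcal{I}$, and by the construction such a neighbor can always be chosen inside $B_j$. Charging this excluded vertex to $B_j$, and noting that the charges across distinct interior blocks go to distinct vertices of $V(G)\setminus \mathcal{I}$, we obtain $n - |\mathcal{I}| \geq h - 2$, which rearranges to the claimed inequality $|\mathcal{I}| \leq n - h + 2 \leq \frac{k-1}{k}n + 2$.

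The main obstacle is designing the gadget so that the ``confinement'' property genuinely holds while the whole graph remains an outerpath. Since a block contains only $k$ vertices yet its hub needs $k+1$ neighbors, some neighbors of every hub must lie in adjacent blocks, so one cannot simply declare each hub's neighborhood private to its block. The crux is to route the chords in such a way that, regardless of which neighbor of $w_j$ one chooses to declare as the ``excluded neighbor,'' a valid choice always exists inside $B_j$; this amounts to having sufficiently many neighbors of $w_j$ belong to $B_j$ and avoiding that a single external excluded vertex simultaneously resolves the degree constraints of two distinct hubs. Verifying that the resulting chord family is crossing-free and that its weak dual is a path is then a routine geometric check. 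Finally, the additive $+2$ in the bound corresponds precisely to the two endpoint blocks $B_1$ and $B_h$: since each of them has only one neighbouring block, its hub may have degree only $k$ in $G$ and so may fail to force an exclusion.
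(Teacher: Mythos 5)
Your construction is essentially the paper's (the $v_i$ are your hubs, the $u^i_j$ fill out the blocks, and the chords are exactly as you describe), but your \emph{argument} is a charging argument that, as you yourself acknowledge, has an unresolved gap at its core, and that gap is genuine. In this construction a block $B_j$ contains $k$ vertices, so the hub $w_j$ has at most $k-1$ neighbors inside $B_j$; forcing an exclusion via the degree bound requires $w_j$ to have $k+1$ relevant neighbors, so at least two of them necessarily live in the adjacent blocks. It can therefore happen that \emph{all} of $w_j$'s in-block neighbors lie in $\mathcal I$ and the excluded neighbor is external (e.g.\ $u^{j+1}_1$), so the confinement property you need --- ``a valid choice always exists inside $B_j$'' --- simply fails. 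Worse, an external vertex such as $u^{j+1}_1$ is simultaneously a neighbor of $w_j$ and of $w_{j+1}$, so without further care a single excluded vertex can absorb the charges of two distinct hubs, breaking the distinctness claim ``the charges across distinct interior blocks go to distinct vertices.'' You correctly flag this as ``the crux'' and then leave it unresolved; as written the proof does not close.

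The paper sidesteps all of this with a \emph{replacement} (local-exchange) argument rather than a charging argument. Given any $\mathcal I$ with $G[\mathcal I]$ of degree at most $k$, one repeatedly swaps a hub $v_i\in\mathcal I$ ($2\le i\le h-1$) for a missing neighbor $w\in\{u^{i-1}_{k-1},u^i_1,\dots,u^i_{k-1},u^{i+1}_1\}\setminus\mathcal I$; the point is that this swap can only affect the degree of $w$ and of $v_{i\pm 1}$, and one checks directly that it does not create a vertex of degree exceeding $k$ (in particular $w$ has $G$-degree at most $3\le k$, and $v_{i\pm 1}$ gain $w$ as a neighbor only while losing $v_i$). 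Iterating yields an $\mathcal I'$ of the same size avoiding all of $v_2,\dots,v_{h-1}$, whence $|\mathcal I|=|\mathcal I'|\le n-(h-2)$. This buys exactly what your charging scheme was missing: there is never any need to certify that the excluded vertex can be confined to one block, nor to worry about double counting, because after the exchanges it is the hubs themselves that are excluded and they are trivially distinct. If you want to salvage the charging route, you would have to either add a careful tiebreaking/injectivity argument for the charges or redesign the gadget so that each hub has $k+1$ private neighbors, which conflicts with the block size $k$ --- the exchange argument is the cleaner fix.
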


\begin{proof}
Let $G$ be the $n$-vertex outerpath depicted in~\cref{fig:upper-bound-outerpath} and defined as follows. Let $h=\lfloor \frac{n}{k}\rfloor$. The graph contains a path $v_1,\dots,v_h$, vertices $u^i_{1},\dots,u^i_{k-1}$, for $i=1,\dots,h$, and edges $u^i_{j}u^i_{j+1}$, $v_iu^i_j$, $v_{i-1}u^i_{1}$, and $u^i_{k-1}v_{i+1}$, whenever the end-vertices of such edges are in $G$. Additional $n-h\cdot k$ isolated vertices, which do not play any role in the proof, are added to $G$, so that it has $n$ vertices. 

Consider any set $\mathcal I \subseteq V(G)$ such that $G[\mathcal I]$ has degree at most $k$. We prove that there exists a set $\mathcal I' \subseteq V(G)$ such that $G[\mathcal I']$ has degree at most $k$, such that $|\mathcal I'|=|\mathcal I|$, and such that $\mathcal I'$ does not contain any of $v_2,\dots,v_{h-1}$. Suppose that $\mathcal I$ contains a vertex $v_i$, for some $i\in \{2,\dots,h-1\}$. Then there exists a vertex $w$ among $u^{i-1}_{k-1},u^i_{1},u^i_{2},\dots,u^i_{k-1},u^{i+1}_{1}$ that is not in $\mathcal I$, as otherwise $v_i$ would have degree $k+1$ in $G[\mathcal I]$. Replace $v_i$ in $\mathcal I$ with $w$. Clearly, the size of $\mathcal I$ remains the same. Also, $G[\mathcal I]$ still has degree at most $k$. Indeed, the only vertices that have degree larger than $k$ in $G$ are $v_1,\dots,v_h$. Also, $w$ is a neighbor of $v_i$, which however is not in $\mathcal I$ after the replacement, and might be a neighbor of $v_{i-1}$ or $v_{i+1}$. However, $v_{i-1}$ or $v_{i+1}$ are also neighbors of $v_i$, hence if they are in $\mathcal I$, their degree in $G[\mathcal I]$ does not increase. The repetition of this replacement results in the desired set $\mathcal I'$. 

It remains to observe that $|\mathcal I'|\leq n-(h-2)=n-\lfloor \frac{n}{k}\rfloor+2\leq \frac{k-1}{k}n+2$. 
\end{proof}

The upper bound of \cref{th:outerplanar-ub} was proved by considering $n$-vertex outerpaths. For this graph class, the bound is tight, up to additive constants, as proved in the following theorem.

 \begin{theorem} \label{th:outerpath-lb}
For every $n\geq 1$, for every $n$-vertex outerpath $G$, and every integer $k \geq 3$, there exists a set $\mathcal I \subseteq V(G)$ such that $G[\mathcal I]$ has degree at most $k$ and $|\mathcal I|\geq \frac{k-1}{k}n$.
\end{theorem}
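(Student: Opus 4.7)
We argue by strong induction on $n$. As in the proof of \cref{th:outerplanar-ub} we may assume without loss of generality that $G$ is a \emph{maximal} outerpath, since adding chords only increases vertex degrees. Recall that the weak dual of a maximal outerpath is a path $t_1{-}t_2{-}\cdots{-}t_{n-2}$, every vertex $v$ belongs to a contiguous range of triangles $[i(v),j(v)]$ with $\deg_G(v)=j(v)-i(v)+2$, and each step $t_l\to t_{l+1}$ eliminates exactly one vertex (the unique element of $t_l\setminus t_{l+1}$). For the base case $n\le k+2$, either the maximum degree of $G$ is at most $k$ and we take $\mathcal I=V(G)$, or $n=k+2$ and the unique vertex $v$ of degree $k+1$ is universal in $G$; then $\mathcal I=V(G)\setminus\{v\}$ has $|\mathcal I|=k+1\ge\frac{k-1}{k}(k+2)$ with induced maximum degree at most $k$.

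For the inductive step ($n\ge k+3$), let $V^\star:=\{v\in V(G):j(v)\le k\}$. By the one-vertex-per-step property, $|V^\star|=k$, and $G-V^\star$ is precisely the maximal outerpath on triangles $t_{k+1},\dots,t_{n-2}$, so it has $n-k$ vertices. By induction there is $\mathcal I'\subseteq V(G)\setminus V^\star$ with $|\mathcal I'|\ge\frac{k-1}{k}(n-k)$ and $G[\mathcal I']$ of maximum degree at most $k$. Let $\alpha,\beta$ be the endpoints of the edge $e_k$ shared by $t_k$ and $t_{k+1}$. A short argument (using that any edge between $V^\star$ and $V(G)\setminus V^\star$ belongs to some triangle $t_l$ with $l\le k$, whose outside endpoint must then lie in $t_k\cap t_{k+1}=e_k$) shows that $\alpha$ and $\beta$ are the only vertices of $V(G)\setminus V^\star$ adjacent in $G$ to any vertex of $V^\star$. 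It therefore suffices to find $W\subseteq V^\star$ with $|W|\ge k-1$ such that $\mathcal I:=\mathcal I'\cup W$ has maximum degree at most $k$, which yields $|\mathcal I|\ge\frac{k-1}{k}n$.

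The only $\mathcal I'$-vertices whose degrees in $G[\mathcal I]$ can exceed their degrees in $G[\mathcal I']$ are $\alpha$ and $\beta$; thus the constraints on $W$ reduce to $|N_G(\alpha)\cap W|\le s_\alpha$ and $|N_G(\beta)\cap W|\le s_\beta$, where $s_\alpha:=k-\deg_{G[\mathcal I']}(\alpha)$ if $\alpha\in\mathcal I'$ and $s_\alpha:=+\infty$ otherwise (and analogously for $s_\beta$), together with $G[W]$ having maximum degree at most $k$. The induced subgraph $G[V^\star\cup\{\alpha,\beta\}]$ is itself a maximal outerpath on $k+2$ vertices---the prefix outerpath on $t_1,\dots,t_k$---whose structure is determined by the first pivots of $G$, and a case analysis on this structure produces such a $W$ of size $k-1$ in every case. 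The hardest case, and the main obstacle, is when the prefix outerpath is a single fan centred at $\alpha$ (or, symmetrically, at $\beta$) and that centre has slack zero, so that every vertex of $V^\star$ is a neighbour of the centre and no $V^\star$-vertex can be added without exceeding its allowed degree. We resolve this by an \emph{adjustment}: remove the zero-slack centre from $\mathcal I'$ (losing one vertex), which also decreases by one the degree of the other endpoint of $e_k$ in $\mathcal I'$ and so increases its slack by one; we then include all $k$ vertices of $V^\star$ in $W$, using the extra slack to accommodate the unique $V^\star$-neighbour of the remaining endpoint on the outer cycle. The net change yields $|\mathcal I|\ge\frac{k-1}{k}(n-k)-1+k=\frac{k-1}{k}n$, while removing vertices from $\mathcal I'$ cannot violate any other degree constraint.
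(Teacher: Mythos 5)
Your proof takes a genuinely different route from the paper's. The paper strengthens the inductive hypothesis (it carries a \emph{special vertex} that is guaranteed to have at most $k-1$ neighbours in the returned set) and builds the piece to be chopped off adaptively, by walking along the weak dual until a subgraph of size at least $k+2$ is accumulated. You instead use the plain statement as inductive hypothesis, fix the cut at the $k$-th triangle, and compensate for the weaker hypothesis with slack bookkeeping and a local ``adjustment'' (removing one vertex of $\mathcal I'$). This is a nice idea, and I believe it can be made to work, but as written there are real gaps.

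First, the constraint you impose on $W$ is mis-stated. You write that, besides the slack bounds on $\alpha$ and $\beta$, it suffices that ``$G[W]$ has maximum degree at most $k$.'' Since $W\subseteq V^\star$ and $|V^\star|=k$, the graph $G[W]$ has at most $k$ vertices and hence trivially has maximum degree at most $k-1$; the stated constraint is vacuous. The constraint you actually need is on the degree of each $w\in W$ in $G[\mathcal I]$, which also counts edges from $w$ to $\alpha$ and $\beta$ when these lie in $\mathcal I'$. That is automatic only when $|W|\le k-1$ (then $\deg_{G[W]}(w)+2\le k$); when you add all $k$ vertices of $V^\star$, a vertex of $V^\star$ appearing in every one of $t_1,\dots,t_k$ has degree $k+1$ in $G$, and keeping both $\alpha$ and $\beta$ would overload it. This vertex is the third vertex $\gamma$ of $t_k$, and the situation is exactly a fan centred at $\gamma$ --- a case that is not covered by the single ``fan centred at $\alpha$'' case you discuss.

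Second, the sentence ``a case analysis on this structure produces such a $W$ of size $k-1$ in every case'' is the entire content of the inductive step and is left unargued. The one case you do treat (fan at $\alpha$ with slack zero) is neither correctly delimited nor the only nontrivial one. Note in particular that the two endpoints of $e_k$ are \emph{not} symmetric: exactly one of them (say $\beta$) first appears in $t_k$ and therefore has exactly one $V^\star$-neighbour, namely $\gamma$, while the other ($\alpha$) has $a=k-i(\alpha)+1\ge 2$ $V^\star$-neighbours. So there is no ``fan centred at $\beta$,'' and a fan at $\alpha$ is only the extreme case $a=k$; your removal trick is in fact needed whenever $a-s_\alpha\ge 2$, not only when $s_\alpha=0$ and the prefix is a fan. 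This asymmetry (the fact that $b=1$ always holds for the newer endpoint) is the structural fact that makes the whole case analysis tractable, yet it is not stated or used in your write-up. Until the case analysis is actually carried out --- together with the corrected degree constraint on the $W$-vertices --- the proof is not complete.
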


\begin{proof}


Without loss of generality, we can assume that $G$ is a maximal outerpath, as otherwise edges can be added to it so that it becomes maximal, and then a subset $\mathcal I$ with the properties required by~\cref{th:outerpath-lb} in the augmented graph satisfies the same properties in the original graph. Let $e$ be an edge that is incident both to the outer face of the outerplane embedding~$\mathcal O_G$ of $G$ and to an internal face corresponding to an extreme of the weak dual of $\mathcal O_G$; we refer to $e$ as to the \emph{special edge}. Also, let $u$ be any of the two end-vertices of $e$, and let $v$ be the other one; we refer to $u$ as to the \emph{special vertex}. We prove, by induction on $n$, that there exists a set $\mathcal{I}\subseteq V(G)$ such that: (i) $|\mathcal{I}| \geq \frac{k-1}{k}n$; (ii) $G[\I]$ has degree at most $k$; and (iii) if the special vertex $u$ is in $\mathcal{I}$, then $u$ has at most $k-1$ neighbors in $\mathcal{I}$. This implies the statement of the theorem. 

There are two base cases:
\begin{itemize}
    \item {\bf Case 1: $n \leq k$.} In this case, we let $\mathcal{I}:=V(G)$. Then $|\mathcal{I}| = n > \frac{k-1}{k}n$. Since there are at most $k$ vertices in $\mathcal{I}$, we have that $G[\I]$ has degree at most $k-1$.

    \item {\bf Case 2: $n = k + 1$.} Let $x$ be any vertex of $G$. We let $\mathcal{I}:=V(G)\setminus \{x\}$. Then $|\mathcal{I}| = k>\frac{(k-1)(k+1)}{k}=\frac{k-1}{k}n$. Since there are at most $k$ vertices in $\mathcal{I}$, we have that $G[\I]$ has degree at most $k-1$.
\end{itemize}


In the inductive case, we have $n \geq k+2$. Our strategy is as follows. We identify an edge $wz$ of $G$ with the following properties. Let $G_R$ be the $wz$-split subgraph of $G$ not containing~$uv$. Also, let $G_L$ be the subgraph of $G$ induced by $V(G)\setminus V(G_R)$. We require that $G_L$ has at least $k$ vertices and that it contains a vertex $h$ such that, by defining $\I=\I_L\cup \I_R$, where $\I_L:=V(G_L)\setminus \{h\}$ and $\I_R\subseteq V(G_R)$ is the set obtained by applying induction on $G_R$, the conditions of the claim are satisfied. We now make this argument precise. 




\begin{figure}[htb]
    \centering
	\begin{subfigure}[b]{0.30\textwidth}
		\centering
		\includegraphics[scale=.8, page=1]{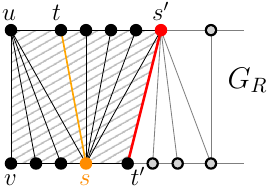}
		\subcaption{}
	\end{subfigure}
	\hfill
	\begin{subfigure}[b]{0.30\textwidth}
		\centering
		\includegraphics[scale=.8, page=2]{img/outerpathPages.pdf}
		\subcaption{}
	\end{subfigure}
    \hfill
	\begin{subfigure}[b]{0.30\textwidth}
		\centering
		\includegraphics[scale=.8, page=3]{img/outerpathPages.pdf}
		\subcaption{}
	\end{subfigure}
	\caption{Illustration for the proof of \cref{th:outerpath-lb}. Graph $G_L$ is striped, vertices and edges in $G_R$ are either not shown or colored gray, the old active vertex $s$ and edge $st$ are orange, while the new active vertex $s'$ and edge $s't'$ are red. (a) All the neighbors of $s$ are moved into $G_L$, which still has at most $k+1$ vertices. (b) All the neighbors of $s$ are moved into $G_L$, which now has at least $k+2$ vertices. (c) The desired vertex $h$ is set to be $s$, the desired edge $wz$ is set to be $s't'$, $s'$ and $t'$ are moved to $G_R$, $\mathcal{I_L} = V(G_L) \setminus \{h\}$, and induction is applied on $G_R$ with $wz$ as a special edge.}
    \label{fig:lower-bound-outerpath}
\end{figure}

In order to find the desired edge $wz$, subgraph $G_L$, and vertex $h$, we work iteratively. We initialize $G_L$ to consist of the vertices $u$ and $v$ (and of the edge $uv$), and $G_R$ to $G[V(G)\setminus \{u,v\}]$. Notice that only one of $u$ and $v$ has at least two neighbors in $G_R$; such a vertex is called \emph{active vertex} and $uv$ is the \emph{active edge}. While $G_L$ has at most $k+1$ vertices, let $s$ be the current active vertex and let $st$ be the current active edge. We move to $G_L$ all the neighbors of $s$ that are in $G_R$. Let $s'$ and $t'$ be the neighbors of $s$ such that the edge $s't'$ exists and its distance from $uv$ in $\mathcal O_G$ is maximum. Now two cases are possible. First, if $G_L$ still has at most $k+1$ vertices, as in \cref{fig:lower-bound-outerpath}(a), we continue the iteration. Only one of $s'$ and $t'$ has at least two neighbors in $G_R$; such a vertex is the new active vertex and $s't'$ is the new active edge.  Conversely, if $G_L$ has at least $k+2$ vertices, as in \cref{fig:lower-bound-outerpath}(b), we stop the iteration, we move $s'$ and $t'$ back to $G_R$, as in \cref{fig:lower-bound-outerpath}(c), we define the desired vertex $h$ to be $s$, the desired edge $wz$ to be $s't'$, and the desired graph $G_L$ to be the current graph $G_L$. We define $\mathcal I_L=V(G_L)\setminus \{h\}$. We apply induction on $G_R$ with $wz$ as a special edge and with the one between $w$ and $z$ that has more than two neighbors in $G_R$ as a special vertex. The induction returns us a set $\I_R\subseteq V(G_R)$ and we define $\I:=\I_L\cup \I_R$. 

We now show that the set $\I$ satisfies the required properties.
\begin{itemize}
\item First, we prove that $|\I|\geq \frac{k-1}{k}n$. By induction, we have $|\I_R|\geq \frac{k-1}{k}|V(G_R)|$. Also, $|V(G_L)|$ is at least $k$, given that it is at least $k+2$ before $s'$ and $t'$ are moved back to $G_R$. Since $|\I_L|=|V(G_L)|-1$, we have that $|\I_L|\geq \frac{k-1}{k}|V(G_L)|$. Hence $|\I|=|\I_L|+|\I_R|\geq \frac{k-1}{k}(|V(G_L)|+|V(G_R)|)=\frac{k-1}{k}n$. 
\item Second, we prove that, if $u\in \I$, then $u$ has at most $k-1$ neighbors in $\mathcal{I}$. Notice that $u$ can have at least $k$ neighbors in $\mathcal{I}$ only if it has degree at least $k\geq 3$. This implies that $u$ is selected as the active vertex in the first step of the iteration for the definition of $G_L$. If $u$ has degree at least $k+1$, then the iteration defining $G_L$ stops at the first step by inserting in $G_L$ all the neighbors of $u$, which implies that $h=u$ and that $u$ is not in $\I$. However, if $u$ has degree $k$, then after inserting in $G_L$ all the neighbors of $u$, we have that $G_L$ has $k+1$ vertices. Now the active vertex becomes one of $u$'s neighbors, which is then selected as $h$, and hence not inserted in $\I$, given that the second step of the iteration results in $G_L$ having at least $k+2$ vertices. Hence, $u$ has $k-1$ neighbors in $\I$. 
\item Finally, we prove that $G[\I]$ has degree at most $k$. Since $w$ and $z$ are the only vertices of $G_R$ with neighbors in $G_L$, we have that any vertex in $V(G_R)\setminus \{w,z\}$ has degree at most $k$ in $G[\I]$, by induction. Next, consider the graph $G_L$ and the active edge $st$ in the step of the iteration defining $G_L$ before the last one, i.e., before all the neighbors of $s$ in $G_R$ are moved to $G_L$ and two of them are moved back to $G_R$ since $G_L$ has at least $k+2$ vertices. At that point, $G_L$ has at most $k+1$ vertices, hence all its vertices other than $s$ and $t$ have degree at most $k$ in $G$ and hence also in $G[\I]$. Furthermore, $s$ is not in~$\I$, since at the next step of the iteration it is selected as the vertex $h$. Also, $t$ has at most one neighbor not in $G_L$, since it is not the active vertex, which bounds its degree in $G[\I]$ to $k$, given that out of its potential $k$ neighbors in $G_L$, one of them, namely $s$, is not inserted into $\I$. All the vertices that are inserted into $G_L$ at the last step of the iteration, other than $w$ and $z$, have degree $3$ in $G$, and hence at most $3$ in $G[\I]$. The vertex between $w$ and $z$ that is not selected as the special vertex has two neighbors in $G_R$ and at most two neighbors in $G_L$, however one of the two neighbors in $G_L$ is $h$, hence that vertex has degree at most $3\leq k$ in $G[\I]$. Finally, the vertex between $w$ and $z$ that is selected as the special vertex has at most $k-1$ neighbors from $G_R$ in $\I$ (this is where we use the condition on the degree of the special vertex), and at most two neighbors in $G_L$, however one of the two neighbors in $G_L$ is $h$, hence that vertex has degree at most $k$ in $G[\I]$. 
\end{itemize}
This concludes the proof of the theorem.
\end{proof}

We conclude the section by showing lower bounds for the size of an induced subgraph of degree at most $k$ that one is guaranteed to find in an outerplanar graph. We exhibit three lower bounds, one for $k=3$, one for $k\geq 4$ with $k$ even, and one for $k\geq 5$ with $k$ odd. The first lower bound is tight, because of \cref{th:outerplanar-ub}. 
%
%
Before stating the theorems, we present some tools that we are going to use for their proofs. By the same reasoning used for outerpaths, we can assume that the input is a maximal outerplanar graph $G$. Our first tool is a decomposition lemma, which is stated in the following.

\begin{lemma}\label{le:decompose-outerplanar}
Let $G$ be an $n$-vertex maximal outerplanar graph, let $\ell\geq 1$ be an integer, and let $xy$ be an edge incident to the outer face of $\mathcal O_G$. If $n\geq \ell+2$, there exists an edge $uv\in E(G)$ such that a $uv$-split subgraph $H$ of $G$ satisfies the following properties:
\begin{itemize}
\item $H$ has a number $h$ of vertices such that $\ell+2\leq h\leq 2\ell+1$; and 
\item if $uv$ does not coincide with the edge $xy$, then $H$ does not contain the edge $xy$.
\end{itemize}
\end{lemma}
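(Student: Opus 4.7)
The plan is to translate the claim into a statement about the weak dual tree $T$ of $\mathcal O_G$ and apply a standard balanced-decomposition argument. Since $G$ is maximal outerplanar, $T$ has $n-2$ vertices (one per internal triangle) and maximum degree $3$. I would root $T$ at the (unique) triangle $t_0$ whose boundary contains the outer-face edge $xy$. Because $xy$ lies on the outer face, it is not shared between two internal faces, so $t_0$ has degree at most $2$ in $T$, giving the root at most two children; every other vertex of $T$, having degree at most $3$ in $T$ minus its parent edge, also has at most two children. Hence, as a rooted tree, $T$ is binary.

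For each vertex $v$ of $T$, let $T_v$ denote the subtree of $T$ rooted at $v$, and let $H_v$ denote the subgraph of $G$ induced by the vertices appearing in any triangle of $T_v$. Standard properties of the weak dual give the following correspondence: if $v \neq t_0$ and $rs$ is the chord of $G$ that corresponds to the edge of $T$ joining $v$ to its parent, then $H_v$ is precisely the $rs$-split subgraph of $G$ not containing $xy$, and it is a maximal outerplanar graph with $|T_v|$ internal triangles, hence $|V(H_v)|=|T_v|+2$; if $v=t_0$, then $H_v=G$, with $n=|T|+2$ vertices, and one can take $uv=xy$. It therefore suffices to find $v\in V(T)$ with $\ell\leq |T_v|\leq 2\ell-1$.

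I would produce such a $v$ by choosing a \emph{deepest} vertex $v$ of $T$ for which $|T_v|\geq \ell$. Such $v$ exists because $|T|=n-2\geq \ell$ by hypothesis. By maximality of depth, every child $c$ of $v$ satisfies $|T_c|\leq \ell-1$; combined with the fact that $v$ has at most two children (whether it is the root or not), this gives $|T_v|\leq 1+2(\ell-1)=2\ell-1$, so $|T_v|\in[\ell,2\ell-1]$. Translating back via $|V(H_v)|=|T_v|+2$, the subgraph $H=H_v$ has between $\ell+2$ and $2\ell+1$ vertices; if $v\neq t_0$ the associated edge $uv$ is a chord distinct from $xy$ and $H$ avoids $xy$, whereas if $v=t_0$ one takes $uv=xy$ and $H=G$, which trivially satisfies the second property vacuously.

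I expect the main obstacle to be the bookkeeping in the first paragraph: cleanly stating, and briefly justifying, that rooting at $t_0$ makes $T$ binary (which uses precisely the hypothesis that $xy$ is on the outer face and hence not a chord) and that each subtree of $T$ corresponds via its parent chord to a $uv$-split subgraph with exactly $|T_v|+2$ vertices. Once binariness of the rooted tree is established, the deepest-heavy-subtree argument that yields the window $[\ell,2\ell-1]$ is entirely routine.
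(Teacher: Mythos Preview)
Your proof is correct. It is essentially the same idea as the paper's, packaged differently: the paper argues by induction on $n$, at each step looking at the triangle on $xy$, splitting into the two subgraphs $G_1,G_2$ across its other two edges, and either stopping (when both have fewer than $2\ell+2$ vertices, so one lies in the target range) or recursing into the one with at least $2\ell+2$ vertices. Unwinding that induction is precisely a descent in the weak dual tree rooted at $t_0$, and your ``deepest vertex with $|T_v|\ge\ell$'' is a direct, non-inductive way to locate the stopping point. The key structural fact---that the rooted tree is binary because $xy$, being on the outer face, is incident to only one internal triangle---is the same in both arguments and is exactly what caps $|T_v|$ at $2\ell-1$. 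Your formulation has the minor advantage of making the binary branching explicit; the paper's has the minor advantage of avoiding the bookkeeping you flag (the subtree/split-subgraph correspondence). One cosmetic point: you overload $v$ for both a tree vertex and an endpoint of the sought edge $uv$, which is worth renaming.
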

\begin{proof}
The proof is by induction on $n$. In the base case we have $\ell+2\leq n\leq 2\ell+1$. Then let $uv$ coincide with $xy$, thus satisfying the second property. Also, $uv$ splits $G$ into two outerplanar graphs, one of which is $G$ and has hence the required number of vertices.

If $n\geq 2\ell+2$, let $z$ be the vertex such that the cycle $(x,y,z)$ bounds an internal face of~$\mathcal O_G$. Let~$G_1$ be the $xz$-split subgraph of $G$ that does not contain~$y$. Let~$G_2$ be the $yz$-split subgraph of $G$ that does not contain~$x$. We distinguish two cases.

\begin{itemize}
\item If both $G_1$ and $G_2$ have less than $2\ell+2$ vertices, then  at least one of them has at least $\ell+2$ vertices, given that $|V(G_1)|+|V(G_2)|=n+1\geq 2\ell+3$, where the first equality is due to the fact that $G_1$ and $G_2$ share~$z$. Then $xz$ or $yz$ is the required edge, depending on whether $G_1$ or $G_2$ has at least $\ell+2$ vertices, respectively. 
\begin{figure}[htb]
	\centering
	\includegraphics[scale=.8, page=1]{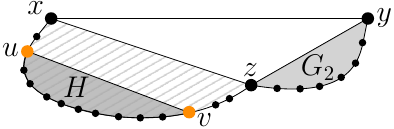}
	\caption{Illustration for the proof of~\cref{le:decompose-outerplanar}. The case in which $G_1$ has at least $2\ell+2$ vertices. Most internal edges of $G$ are not shown, the interior of $G_1$ is striped.}
	\label{fig:lower-bound-outerplanar-lemma}
\end{figure}
\item Otherwise, one of $G_1$ and $G_2$, say $G_1$, has at least $2\ell+2$ vertices. See \cref{fig:lower-bound-outerplanar-lemma}. By induction, $G_1$ contains an edge $uv$ such that a $uv$-split subgraph $H$ of $G_1$ has a number~$h$ of vertices such that $\ell+2\leq h\leq 2\ell+1$ and does not contain the edge $xz$. The latter condition implies that $H$ is also one of the two $uv$-split subgraphs of $G$ and that $H$ does not contain the edge $xy$, as required.
\end{itemize}
This concludes the proof of the lemma.
\end{proof}

For a subgraph $H$ of $G$, we denote by $\overline{H}$ the subgraph of $G$ induced by $V(G)\setminus V(H)$. Note that $\overline{H}$ is a (not necessarily maximal) outerplanar graph. The basic strategy we use in order to find a large set $\mathcal I \subseteq V(G)$ of vertices such that $G[\mathcal I]$ has degree at most $k$ is the following. By employing \cref{le:decompose-outerplanar}, we find an edge $uv$ such that, in a $uv$-split subgraph $H$ of $G$, we can select a set~$\mathcal I_H\subset V(H)$ of vertices that contains the fraction of the vertices in $V(H)$ we are aiming for ($\frac{2}{3}$ for $k=3$, $\frac{2k+1}{2k+5}$ for $k \geq 4$ with $k$ even, and $\frac{2k+2/3}{2k+14/3}$ for $k \geq 5$ with $k$ odd), that induces a subgraph of $G$ with degree at most $k$, and that does not contain $u$ and $v$. We also apply induction to find a large set of vertices $\mathcal I_{\overline{H}}$ in $\overline{H}$ such that $G[\mathcal I_{\overline{H}}]$ has degree at most $k$. Then the set $\mathcal I=\mathcal I_H \cup \mathcal I_{\overline{H}}$ has the required number of vertices and $G[\mathcal I]$ has degree at most~$k$. This is formalized in the following lemma.

\begin{lemma}\label{le:tool-degree-k}
Let $k\geq 3$ be an integer and let $0<c<1$ be a real number. Suppose that $G$ contains an edge $uv$ such that one of the two $uv$-split subgraphs of $G$, say $H$, has a set~$\mathcal I_H$ of vertices such that $\mathcal I_H$ contains neither $u$ nor $v$, such that $|\mathcal I_H|\geq c\cdot |V(H)|$, and such that $G[\mathcal I_H]$ has degree at most $k$. Suppose also that $\overline{H}$ has a set $\mathcal I_{\overline{H}}$ of vertices such that $|\mathcal I_{\overline{H}}|\geq c\cdot |V(\overline{H})|$ and such that $G[\mathcal I_{\overline{H}}]$ has degree at most $k$.
	
Then the set $\mathcal I:=\mathcal I_H\cup \mathcal I_{\overline{H}}$ contains at least $c\cdot n$ vertices and $G[\mathcal I]$ has degree at most $k$.
\end{lemma}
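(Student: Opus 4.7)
The plan is to verify the two assertions of the lemma separately, relying on the basic fact that by definition $V(\overline{H})=V(G)\setminus V(H)$, so $V(H)$ and $V(\overline{H})$ partition $V(G)$ and $|V(H)|+|V(\overline{H})|=n$. For the size bound, since $\mathcal{I}_H\subseteq V(H)$ and $\mathcal{I}_{\overline{H}}\subseteq V(\overline{H})$ live in disjoint sets, we immediately get $|\mathcal{I}|=|\mathcal{I}_H|+|\mathcal{I}_{\overline{H}}|$, and adding the two hypothesized inequalities yields $|\mathcal{I}|\geq c(|V(H)|+|V(\overline{H})|)=c\cdot n$.

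The heart of the proof is a separation observation for $uv$-split subgraphs, which I would record explicitly: in the outerplane embedding $\mathcal{O}_G$, the chord $uv$ splits the closed disk bounded by the outer Hamiltonian cycle into two regions, whose boundary vertex sets are exactly those of the two $uv$-split subgraphs. By planarity, every edge of $G$ other than $uv$ lies in the closure of one of these regions, and hence the only vertices of $V(H)$ that can be adjacent in $G$ to a vertex of $V(\overline{H})$ are $u$ and $v$ themselves.

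Armed with this, the degree bound follows by two symmetric checks. For any $w\in\mathcal{I}_H$, the hypothesis $u,v\notin\mathcal{I}_H$ implies $w\neq u,v$, so by the separation observation every neighbor of $w$ in $G$ lies in $V(H)$; therefore the neighbors of $w$ in $\mathcal{I}$ coincide with its neighbors in $\mathcal{I}_H$, and there are at most $k$ of them by assumption. For any $w\in\mathcal{I}_{\overline{H}}$, we automatically have $w\neq u,v$ since $u,v\notin V(\overline{H})$; the only way a neighbor of $w$ could lie in $\mathcal{I}\setminus\mathcal{I}_{\overline{H}}=\mathcal{I}_H$ would be via $u$ or $v$, but those are explicitly excluded from $\mathcal{I}_H$. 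Hence $\deg_{G[\mathcal{I}]}(w)=\deg_{G[\mathcal{I}_{\overline{H}}]}(w)\leq k$.

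The main (and only) subtle point is the separation observation; everything else is bookkeeping on disjoint unions. No induction or case analysis is needed, so this is really a gluing lemma whose role is to cleanly decouple the inductive work on $\overline{H}$ from the local construction of $\mathcal{I}_H$ inside $H$ in the theorems that follow.
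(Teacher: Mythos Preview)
Your proof is correct and follows essentially the same approach as the paper's. Both arguments hinge on the same two facts: that $|V(H)|+|V(\overline{H})|=n$ for the size bound, and that $u$ and $v$ are the only vertices of $H$ with neighbors in $\overline{H}$ for the degree bound; you simply spell out the separation observation and the two symmetric degree checks in more detail than the paper does.
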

\begin{proof}
	That $\mathcal I$ contains at least $c \cdot n$ vertices follows by the assumption on the cardinalities of $\mathcal I_H$ and $\mathcal I_{\overline{H}}$, and by $|V(H)|+|V(\overline{H})|=n$. That $G[\mathcal I]$ has degree at most $k$ follows by the assumption on the degrees of $G[\mathcal I_H]$ and $G[\mathcal I_{\overline{H}}]$, and by the fact that $u$ and $v$, which do not belong to $\mathcal I_H$, are the only vertices of $H$ with neighbors in $\overline{H}$. 
\end{proof}

We are now ready to present our theorems. We start with the one for induced subgraphs of degree at most $3$.

\begin{theorem} \label{th:outerplanar-lb-3}
	For every $n\geq 1$, for every $n$-vertex outerplanar graph $G$, there exists a set $\mathcal I \subseteq V(G)$ such that $G[\mathcal I]$ has degree at most $3$ and $|\mathcal I|\geq \frac{2}{3}n$.
\end{theorem}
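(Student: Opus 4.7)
The plan is to proceed by induction on $n$. First, I would reduce to the case that $G$ is a maximal outerplanar graph: if $G$ is not maximal, one can augment it by adding edges until it becomes so, and any $\mathcal{I}\subseteq V(G)$ with $G[\mathcal{I}]$ of degree at most $3$ in the augmented graph remains a valid choice for the original $G$. For small values of $n$ I would handle the claim directly: for $n\leq 3$ any set works, while for $n=4,5$ one can simply take $V(G)$ (for $n=4$, where the maximum degree is already at most $3$) or remove the apex of the fan (for $n=5$), which in either case yields an $\mathcal{I}$ of size at least $\lceil \frac{2n}{3}\rceil$.

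For the inductive step with $n$ sufficiently large, I would invoke \cref{le:decompose-outerplanar} with a fixed constant $\ell$ to locate an edge $uv\in E(G)$ together with a $uv$-split subgraph $H$ of $G$ whose number of vertices $h$ satisfies $\ell+2\leq h\leq 2\ell+1$. Since $H$ is a triangulation of a polygon having $uv$ as one of its outer-boundary edges, it admits only finitely many combinatorial shapes. I would exhibit a subset $\mathcal{I}_H\subseteq V(H)\setminus\{u,v\}$ with $|\mathcal{I}_H|\geq \frac{2h}{3}$ and such that $G[\mathcal{I}_H]$ has degree at most $3$. Applying the inductive hypothesis to $\overline{H}$ yields a set $\mathcal{I}_{\overline{H}}$, and \cref{le:tool-degree-k} with $c=\frac{2}{3}$ then combines the two sets into the desired $\mathcal{I}$ satisfying $|\mathcal{I}|\geq \frac{2n}{3}$ and $G[\mathcal{I}]$ of degree at most $3$.

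The main obstacle is the construction of $\mathcal{I}_H$. Since $\mathcal{I}_H$ must avoid $u$ and $v$ while still having size at least $\frac{2h}{3}$, one needs $h-2\geq \frac{2h}{3}$, that is, $h\geq 6$, so the smallest useful value is $\ell=4$. For $h=6$, the choice $\mathcal{I}_H=V(H)\setminus\{u,v\}$ works automatically: the induced subgraph has only four vertices, so its degree is trivially at most $3$. For larger values of $h$, however, deleting $u$ and $v$ from $H$ can leave behind vertices whose degree in the induced subgraph exceeds $3$, so one must either pick $\ell$ large enough that sufficient slack is available to drop all such high-degree vertices while still retaining at least $\frac{2h}{3}$ of the vertices of $V(H)\setminus\{u,v\}$, or enumerate the (finitely many) triangulations of $H$ and specify $\mathcal{I}_H$ case by case. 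Either variant reduces the argument to a finite combinatorial verification, after which \cref{le:tool-degree-k} closes the induction.
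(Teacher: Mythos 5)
The core of your argument is the claim that, for a suitable constant $\ell$, every $uv$-split subgraph $H$ with $\ell+2\leq h\leq 2\ell+1$ vertices admits a subset $\mathcal{I}_H\subseteq V(H)\setminus\{u,v\}$ of size at least $\frac{2h}{3}$ inducing a graph of degree at most $3$. You assert that this ``reduces the argument to a finite combinatorial verification,'' but you neither carry out the verification nor supply a reason it should succeed, and in fact the claim is false for several small sizes and is, at best, extremely delicate for all sizes.

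Concretely, take $h=7$ and let $H$ be a fan whose apex $w$ satisfies $w\notin\{u,v\}$, so that $w$ is adjacent to all other six vertices of $H$. You would need $|\mathcal{I}_H|\geq\lceil 14/3\rceil=5$, hence $\mathcal{I}_H=V(H)\setminus\{u,v\}$, but then $w$ has four neighbours in $\mathcal{I}_H$. If instead $w\notin\mathcal{I}_H$, only four vertices remain. So \emph{no} valid $\mathcal{I}_H$ exists for this $H$, and the same happens for $h=8$. This shows the difficulty cannot be resolved ``for free'' by enumeration. Raising $\ell$ does not make the obstruction disappear, because the constraint of excluding $u$ and $v$ always bites: $\frac{2}{3}$ is the \emph{exact} constant for outerpaths by \cref{th:outerplanar-ub} and \cref{th:outerpath-lb}, so the slack available after discarding $u,v$ and the forced high-degree vertices is $O(1)$, not $\Theta(h)$. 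Already for $h=12$, a zig-zag triangulation has eight degree-$4$ vertices none of which is adjacent to $u$ or $v$, and the best one can do is exactly $\frac{2h}{3}=8$ retained vertices with no room to spare; one bad extra vertex and the construction breaks. So the assertion of ``sufficient slack'' is not just unproven but misleading.

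The paper copes with exactly this difficulty, and that is what its $16$-case analysis is for. When a $uv$-split subgraph of a problematic size (such as $7$, $8$, or $11$) exists, the paper does \emph{not} try to produce an $\mathcal{I}_H$ for it in isolation; instead it proves that such an $H$ must have a constrained structure (e.g.\ for $h=7$ the only problematic case is the fan), and from that structure it extracts ``$5$-over-$7$'', ``$6$-over-$8$'', and ``$8$-over-$11$'' sets that do contain one of $u,v$ but with tightly controlled degree. These auxiliary sets are then used to build $\mathcal{I}_{H'}$ for larger split subgraphs $H'$, and only after ruling out all problematic intermediate sizes does the paper invoke \cref{le:decompose-outerplanar} (with $\ell=10$). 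Your sketch drops this interdependence entirely and replaces it with an unsubstantiated appeal to brute force; the brute force, carried out honestly, would discover the obstructions above and still leave you with the job of working around them, which is precisely what the published proof does.
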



\begin{proof}
Let $G$ be an $n$-vertex maximal outerplanar graph. The proof is by induction on $n$. In the base case we have $n\leq 11$. An easy edge-counting argument proves that we can find a set $\mathcal I$ with $n$ vertices (if $1\leq n\leq 4$), with $n-1$ vertices (if $5\leq n\leq 6$), with $n-2$ vertices (if $7\leq n\leq 8$), or with $n-3$ vertices (if $9\leq n\leq 10$) such that $G[\mathcal I]$ has degree at most $3$. In all four cases the ratio $\frac{|\mathcal I|}{n}$ is larger than or equal to $\frac{2}{3}$.  The edge-counting argument is as follows. Observe that $\mathcal O_G$ has $n-3$ internal edges and that, for any set $\mathcal I\subseteq V(G)$, a vertex with degree at least $4$ in $G[\mathcal I]$ has at least two incident edges that are internal in $\mathcal O_G$. Hence, we can initialize $\mathcal I=V(G)$ and then repeatedly remove from $\mathcal I$ a vertex with degree at least~$4$ in $G[\mathcal I]$, while such a vertex exists. If $n\leq 4$, then $G$ has at most $1$ edge that is internal in $\mathcal O_G$, hence no vertex is removed from $\mathcal I$. If $5\leq n\leq 6$, then $G$ has up to $3$ edges that are internal in $\mathcal O_G$, hence up to $1$ vertex is removed from $\mathcal I$. If $7\leq n\leq 8$, then $G$ has up to $5$ edges that are internal in $\mathcal O_G$, hence up to $2$ vertices are removed from $\mathcal I$. Finally, if $9\leq n\leq 10$, then $G$ has up to $7$ edges that are internal in $\mathcal O_G$, hence up to $3$ vertices are removed from $\mathcal I$. 

If $n=11$, the above argument requires some amendment, since $G$ has $8$ internal edges, hence charging pairs of internal edges to removed degree-$4$ vertices only upper bounds the number of removed vertices by $4$, while we need to show that the number of removed vertices is at most $3$ in order to get our desired~$\frac{2}{3}$ ratio. The key observation is that, after initializing~$\mathcal I=V(G)$ and removing from $\mathcal I$ the first vertex $v$ with degree at least~$4$ in $G$, the number of internal edges in $G[\mathcal I]$ decreases at least by three. This is obvious if the degree of $v$ in $G$ is at least $5$. If the degree of $v$ in $G$ is $4$, let $e_1$ and $e_2$ be the internal edges of $G$ incident to $v$, and let $v_0$, $v_1$, $v_2$, and $v_3$ be the neighbors of $v$, where $e_1=vv_1$ and $e_2=vv_2$ lie inside the cycle~$(v,v_0,v_1,v_2,v_3)$ of $G$. Then one of the edges of~ the path $(v_0,v_1,v_2,v_3)$ is an internal edge~$e$ of~$G$, as otherwise $G$ would not be a maximal outerplanar graph or it would contain $5$ vertices. Hence, the removal of $v$ from $\mathcal I$ not only removes $e_1$ and $e_2$ from~$G[\mathcal I]$, but also makes~$e$ an external edge of $G[\mathcal I]$. Since the number of internal edges of $G[\mathcal I]$ after the removal of $v$ is at most $5$, at most $2$ more degree-$4$ vertices are then removed from~$G[\mathcal I]$.

Suppose now that $n\geq 12$. As argued before the theorem, it suffices to discuss the case in which $G$ is a maximal outerplanar graph. We define a $uv$-split subgraph $H$ of $G$ with a suitable size, then we select a large set $\mathcal I_H\subseteq V(H)$ of vertices, with $u,v \notin \mathcal I_H$, such that $G[\mathcal I_H]$ has degree at most $3$. Applying induction on $\overline{H}$ to get a set $\mathcal I_{\overline H}$ and then applying \cref{le:tool-degree-k} allows us to define the desired set $\mathcal I$. We distinguish $16$ cases, labeled according to the size of a $uv$-split subgraph that we can find in $G$ -- we need to consider every size in this range. Eventually, we will use \cref{le:decompose-outerplanar} with $\ell=10$, in order to find a $uv$-split subgraph with a number of vertices between $12$ and~$21$, however for our arguments we need to consider $uv$-split subgraphs of smaller sizes, as well. Specifically, if we can find a~$uv$-split subgraph~$H$ with a number of vertices that is~$6$,~$9$,~$10$, or between~$12$ and~$21$, we can select a large set~$\mathcal I_H\subseteq V(H)$ of vertices, with~$u,v \notin \mathcal I_H$, such that~$G[\mathcal I_H]$ has degree at most~$3$, which allows us to apply \cref{le:tool-degree-k} to find the desired set~$\mathcal I$ of vertices in~$G$. If~$H$ has~$7$,~$8$, or~$11$ vertices, we can still select a large set~$\mathcal I_H\subseteq V(H)$ of vertices such that~$G[\mathcal I_H]$ has degree at most~$3$, however this requires~$u$ and/or~$v$ to be part of~$\mathcal I_H$. The fact that~$u$ and/or~$v$ belong to~$\mathcal I_H$ prevents us from employing \cref{le:tool-degree-k} to find the desired set of vertices in~$G$, however the existence of such a set~$\mathcal I_H$ is used when~$uv$-split subgraphs with more vertices are considered. In the following, whenever we consider a $uv$-split subgraph $H$ of $G$,  we denote by $w$ the vertex such that the cycle $(u,v,w)$ bounds an internal face of $\mathcal O_H$ and by $h$ the number of vertices of $H$. Also, we denote by $H_1$ the $uw$-split subgraph of $G$ not containing $v$ and by $H_2$ the $vw$-split subgraph of $G$ not containing $u$. We let $h_1$ and $h_2$ be the number of vertices of $H_1$ and $H_2$, respectively, where $h_1\geq 2$, $h_2\geq 2$,  and $h_1+h_2=h+1$, given that $w$ belongs both to $H_1$ and to $H_2$. Finally, we denote by $\mathcal I_{\overline{H}} \subseteq V({\overline{H}})$ a set such that $G[\mathcal I_{\overline{H}}]$ has degree at most $3$ and $|\mathcal I_{\overline{H}}|\geq \frac{2}{3}|V({\overline{H}})|$. This set exists by induction.

{\bf Case h=6:} Suppose that $G$ contains an edge $uv$ such that a $uv$-split subgraph $H$ of $G$ has $6$ vertices. We define $\mathcal I_H:=V(H)\setminus\{u,v\}$. Note that $|\mathcal I_H|/|V(H)|=\frac{4}{6}=\frac{2}{3}$. Also, $\mathcal I_H$ contains $4$ vertices, hence $G[\mathcal I_H]$ has degree at most~$3$. By \cref{le:tool-degree-k}, the set $\mathcal I:=\mathcal I_H\cup \mathcal I_{\overline{H}}$ contains at least $\frac{2}{3}n$ vertices and $G[\mathcal I]$ has degree at most~$3$, as desired. We henceforth assume that $G$ has no edge $uv$ such that a $uv$-split subgraph of $G$ has $6$ vertices. 


{\bf Case h=7:}  Suppose that $G$ contains an edge $uv$ such that a $uv$-split subgraph $H$ of $G$ has $h=7$ vertices. Then we have $h_1\geq 3$, as otherwise $H_2$ would have $6$ vertices. Symmetrically, $h_2\geq 3$. If there exists a vertex in $V(H)\setminus \{w\}$ that is not adjacent to $w$, then we define $\mathcal I_H:=V(H)\setminus\{u,v\}$. Then $|\mathcal I_H|/|V(H)|=\frac{5}{7}>\frac{2}{3}$. Also, $G[\mathcal I_H]$ has degree at most $3$, given that $w$ is adjacent to at most $3$ vertices in $\mathcal I_H$, by assumption, given that each vertex in $V(H_1)\setminus \{u,w\}$ is not adjacent to at least $1$ vertex of $H_2$, and given that each vertex in $V(H_2)\setminus \{v,w\}$ is not adjacent to at least $1$ vertex of $H_1$. By \cref{le:tool-degree-k}, the set $\mathcal I:=\mathcal I_H\cup \mathcal I_{\overline{H}}$ contains at least $\frac{2}{3}n$ vertices and $G[\mathcal I]$ has degree at most $3$, as desired. 

We henceforth assume that, if $G$ has an edge $uv$ such that a $uv$-split subgraph $H$ of $G$ has $7$ vertices, then the degree of $w$ in $H$ is $6$. This implies that $H$ admits a \emph{$5$-over-$7$ set with $u$}, a \emph{$5$-over-$7$ set with $v$}, and a \emph{$6$-over-$7$ set with $u$ and $v$}, which are defined as follows. A $5$-over-$7$ set with $u$ is a set $\mathcal I_H$ of vertices such that $|\mathcal I_H|=5$, such that $u\in \mathcal I_H$ and $v\notin \mathcal I_H$, such that $G[\mathcal I_H]$ has degree at most $3$, and such that the degree of $u$ in $G[\mathcal I_H]$ is at most $1$. Indeed, the set $\mathcal I_H:=V(H)\setminus \{v,w\}$ satisfies these properties, see \cref{fig:outerplanar-lb-7}(a). The definition of a $5$-over-$7$ set with $v$ is analogous, with $v$ in place of $u$ and vice versa, and is realized by the set $\mathcal I_H:=V(H)\setminus \{u,w\}$, see \cref{fig:outerplanar-lb-7}(b). A $6$-over-$7$ set with $u$ and $v$ is a set $\mathcal I_H$ of vertices such that $|\mathcal I_H|=6$, such that $u,v\in \mathcal I_H$, such that $G[\mathcal I_H]$ has degree at most $3$, and such that the degree of $u$ and $v$ in $G[\mathcal I_H]$ is at most $2$. Indeed, the set $\mathcal I_H:=V(H)\setminus \{w\}$ satisfies these properties, see \cref{fig:outerplanar-lb-7}(c). Although we cannot use such sets to exclude the existence of a $uv$-split subgraph with $7$ vertices, since they contain $u$ or $v$, we will use them in later cases.

\begin{figure}[htb]
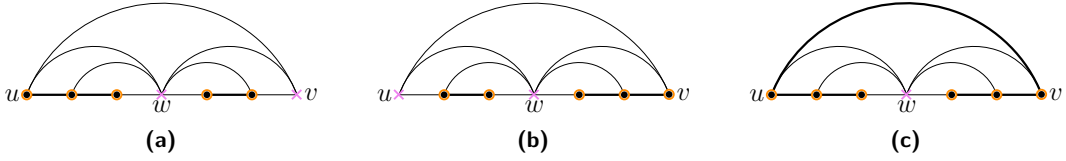

	\centering
	\begin{subfigure}[b]{0.3\textwidth}
		\centering
		\includegraphics[scale=.7, page=7]{img/lb_outerplanar.pdf}
		\subcaption{}
	\end{subfigure}
	\hfill
	\begin{subfigure}[b]{0.3\textwidth}
		\centering
		\includegraphics[scale=.7, page=8]{img/lb_outerplanar.pdf}
		\subcaption{}
	\end{subfigure}
	\hfill
	\begin{subfigure}[b]{0.3\textwidth}
		\centering
		\includegraphics[scale=.7, page=9]{img/lb_outerplanar.pdf}
		\subcaption{}
	\end{subfigure}
	\caption{A $uv$-split subgraph $H$ with $7$ vertices in which the vertex $w$ has degree $6$. (a) A $5$-over-$7$ set with~$u$. (b) A $5$-over-$7$ set with $v$.  (c) A $6$-over-$7$ set with $u$ and $v$. Vertices in the sets have a circular contour, while vertices not in the sets have a cross.}
	\label{fig:outerplanar-lb-7}
\end{figure}


{\bf Case h=8:}  Suppose that $G$ contains an edge $uv$ such that a $uv$-split subgraph~$H$ of~$G$ has $h=8$ vertices. Since $h_1+h_2=h+1$ and $G$ has no edge $uv$ such that a $uv$-split subgraph of $G$ has $6$ vertices, we have that either one of $h_1$ and $h_2$ is $2$ and the other one is $7$, or one of $h_1$ and $h_2$ is $4$ and the other one is $5$. 
\begin{itemize}
	\item Suppose that $h_1=2$ and $h_2=7$, the case in which $h_1=7$ and $h_2=2$ is symmetric. By assumption, $H_2$ admits a $5$-over-$7$ set with $w$, say $\mathcal I^w_{H_2}$, and a $6$-over-$7$ set with $w$ and~$v$, say $\mathcal I^{wv}_{H_2}$. This implies that $H$ admits a \emph{$6$-over-$8$ set with $u$} and a \emph{$6$-over-$8$ set with $v$}. The former is a set $\mathcal I_H$ of vertices such that $|\mathcal I_H|=6$, such that $u\in \mathcal I_H$ and $v\notin \mathcal I_H$, such that $G[\mathcal I_H]$ has degree at most $3$, and such that the degree of $u$ in $G[\mathcal I_H]$ is at most~$2$. Indeed, the set $\mathcal I^w_{H_2}\cup \{u\}$ satisfies these properties, see \cref{fig:outerplanar-lb-8-1}(a). The definition of a $6$-over-$8$ set with $v$ is analogous, with $v$ in place of $u$ and vice versa, and is realized by the set $\mathcal I^{wv}_{H_2}$, see \cref{fig:outerplanar-lb-8-1}(b). We also have that $H$ admits a \emph{$6$-over-$8$ set with $u$-deg-$1$} or a \emph{$6$-over-$8$ set with $v$-deg-$1$}. The former is a $6$-over-$8$ set with $u$ such that the degree of $u$ in $G[\mathcal I_H]$ is at most~$1$. The latter is a $6$-over-$8$ set with $v$ such that the degree of $v$ in $G[\mathcal I_H]$ is at most~$1$. Indeed, with the current assumption $h_1=2$ and $h_2=7$, we have that $H$ admits a $6$-over-$8$ set with $u$-deg-$1$, while if $h_1=7$ and $h_2=2$, then $H$ admits a $6$-over-$8$ set with $v$-deg-$1$. Although we cannot use such sets to exclude the existence of a $uv$-split subgraph with $8$ vertices, given that they contain $u$ or $v$, we will use them in later cases.

\begin{figure}[htb]
	\centering
	\begin{subfigure}[b]{0.45\textwidth}
		\centering
		\includegraphics[scale=.7, page=10]{img/lb_outerplanar.pdf}
		\subcaption{}
	\end{subfigure}
	\hfill
	\begin{subfigure}[b]{0.45\textwidth}
		\centering
		\includegraphics[scale=.7, page=11]{img/lb_outerplanar.pdf}
		\subcaption{}
	\end{subfigure}
	\caption{A $uv$-split subgraph $H$ with $8$ vertices with $h_1=2$ and $h_2=7$. (a) A $6$-over-$8$ set with~$u$. (b) A $6$-over-$8$ set with $v$.}
	\label{fig:outerplanar-lb-8-1}
\end{figure}

	\item Suppose that $h_1=4$ and $h_2=5$, the case in which $h_1=5$ and $h_2=4$ is symmetric. If $w$ has at most $3$ neighbors among the vertices in $V(H)\setminus\{u,v\}$, then we can define $\mathcal I_H:=V(H)\setminus\{u,v\}$. Then $|\mathcal I_H|/|V(H)|=\frac{6}{8}>\frac{2}{3}$. Also, $G[\mathcal I_H]$ has degree at most $3$. Indeed, consider any vertex $z\in \mathcal I_H$. If $z=w$, then $z$ is adjacent to at most $3$ vertices in~$\mathcal I_H$, by assumption. If $z\in V(H_2)$ and $z\neq w$ then $z$ is not adjacent to the $2$ vertices of~$H_1$ different from $u$ and $w$ and is not adjacent to $z$ itself. Since $u$ and $v$ are not in~$\mathcal I_H$, we have that $z$ has at most $3$ neighbors in $G[\mathcal I_H]$. The argument for the case in which $z\in V(H_1)$ and $z\neq w$ is analogous.

\begin{figure}[htb]
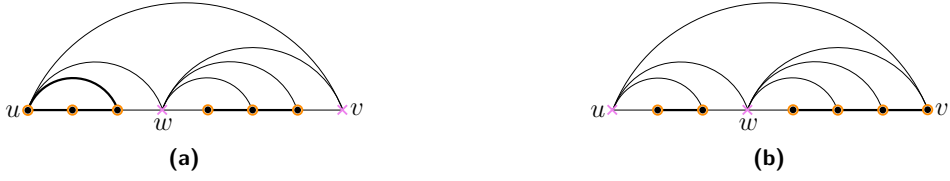

	\centering
	\begin{subfigure}[b]{0.45\textwidth}
		\centering
		\includegraphics[scale=.7, page=12]{img/lb_outerplanar.pdf}
		\subcaption{}
	\end{subfigure}
	\hfill
	\begin{subfigure}[b]{0.45\textwidth}
		\centering
		\includegraphics[scale=.7, page=13]{img/lb_outerplanar.pdf}
		\subcaption{}
	\end{subfigure}
	\caption{A $uv$-split subgraph $H$ with $8$ vertices with $h_1=4$ and $h_2=5$. (a) A $6$-over-$8$ set with~$u$. (b) A $6$-over-$8$ set with $v$.}
	\label{fig:outerplanar-lb-8-2}
\end{figure}
	
	We henceforth assume that, if $G$ has an edge $uv$ such that a $uv$-split subgraph $H$ of~$G$ has $8$ vertices and if the $uw$-split subgraph $H_1$ has $4$ or $5$ vertices, then $w$ has degree at least $6$ in $H$. This again implies that $H$ admits a $6$-over-$8$ set with $u$ and a $6$-over-$8$ set with $v$. Indeed, the former is realized by the set $V(H)\setminus \{v,w\}$, see \cref{fig:outerplanar-lb-8-2}(a), and the latter is realized by the set $V(H)\setminus \{u,w\}$, see \cref{fig:outerplanar-lb-8-2}(b). Also, $H$ admits a $6$-over-$8$ set with $u$-deg-$1$ or a $6$-over-$8$ set with $v$-deg-$1$. Indeed, with the current assumption $h_1=4$ and $h_2=5$, we have that $H$ admits a $6$-over-$8$ set with $v$-deg-$1$, see again \cref{fig:outerplanar-lb-8-2}(b), while if $h_1=5$ and $h_2=4$ then $H$ admits a $6$-over-$8$ set with $u$-deg-$1$.
\end{itemize} 

{\bf Case h=9:}  Suppose that $G$ contains an edge $uv$ such that a $uv$-split subgraph $H$ of $G$ has $h=9$ vertices. Since $h_1+h_2=h+1$ and $G$ has no edge $xy$ such that an $xy$-split subgraph of $G$ has $6$ vertices, we have that either one of $h_1$ and $h_2$ is $2$ and the other one is $8$, or  one of $h_1$ and $h_2$ is $3$ and the other one is $7$, or both $h_1$ and $h_2$ are $5$. 

\begin{itemize}
	\item Suppose that $h_1=2$ and $h_2=8$, the case in which $h_1=8$ and $h_2=2$ is symmetric. By assumption, $H_2$ admits a $6$-over-$8$ set with $w$, say $\mathcal I^w_{H_2}$. Then we can define $\mathcal I_H:=\mathcal I^w_{H_2}$. Since $\mathcal I^w_{H_2}$ is a $6$-over-$8$ set with $w$, we have that $G[\mathcal I_H]$ has degree at most $3$ and $|\mathcal I_H|/|V(H)|=\frac{6}{9}=\frac{2}{3}$. Since $u$ and $v$ do not belong to $\mathcal I_H$, by \cref{le:tool-degree-k}, the set $\mathcal I:=\mathcal I_H\cup \mathcal I_{\overline{H}}$ contains at least $\frac{2}{3}n$ vertices and $G[\mathcal I]$ has degree at most~$3$, as desired. 
	\item Suppose next that $h_1=3$ and $h_2=7$, the case in which $h_1=7$ and $h_2=3$ is symmetric. By assumption, $H_2$ admits a $5$-over-$7$ set with $w$, say $\mathcal I^w_{H_2}$. We define $\mathcal I_H:=\mathcal I^w_{H_2} \cup \{t\}$, where $t$ is the vertex of $H_1$ different from $u$ and $w$. Note that $|\mathcal I_H|/|V(H)|=\frac{6}{9}=\frac{2}{3}$. Since $\mathcal I^w_{H_2}$ is a $5$-over-$7$ set with $w$ and since $w$ is the only vertex of $H_2$ adjacent to $t$, we have that $G[\mathcal I_H]$ has degree at most $3$; in particular, $w$ has degree at most $1$ in $G[\mathcal I^w_{H_2}]$, hence at most $2$ in $G[\mathcal I_H]$. Since $u$ and $v$ do not belong to $\mathcal I_H$, by \cref{le:tool-degree-k}, the set $\mathcal I:=\mathcal I_H\cup \mathcal I_{\overline{H}}$ contains at least $\frac{2}{3}n$ vertices and $G[\mathcal I]$ has degree at most~$3$, as desired. 
	\item Suppose finally that $h_1=5$ and $h_2=5$. We define $\mathcal I_H:=V(H)\setminus\{u,v,w\}$. Note that $|\mathcal I_H|/|V(H)|=\frac{6}{9}=\frac{2}{3}$. Also, each vertex $z$ in $\mathcal I_H\cap V(H_1)$ is adjacent to at most $2$ vertices in $\mathcal I_H$, namely those in $\mathcal I_H\cap V(H_1)$ different from $z$ itself, given that $u$ and $w$ are the only vertices of $H_1$ adjacent to vertices of $H_2$. Analogously, each vertex in $\mathcal I_H\cap V(H_2)$ is adjacent to at most $2$ vertices in $\mathcal I_H$. It follows that $G[\mathcal I_H]$ has degree at most~$2$. By \cref{le:tool-degree-k}, the set $\mathcal I:=\mathcal I_H\cup \mathcal I_{\overline{H}}$ contains at least $\frac{2}{3}n$ vertices and $G[\mathcal I]$ has degree at most~$3$, as desired. 
\end{itemize} 
We henceforth assume that $G$ has no edge $uv$ such that a $uv$-split subgraph of $G$ has~$9$~vertices. 

{\bf Case h=10:}  Suppose that $G$ contains an edge $uv$ such that a $uv$-split subgraph $H$ of $G$ has $h=10$ vertices. Since $h_1+h_2=h+1$ and $G$ has no edge $xy$ such that an $xy$-split subgraph of $G$ has $6$ or $9$ vertices, we have that either one of $h_1$ and $h_2$ is $3$ and the other one is $8$, or one of $h_1$ and $h_2$ is $4$ and the other one is $7$. 

\begin{itemize}
	\item Suppose that $h_1=3$ and $h_2=8$, the case in which $h_1=8$ and $h_2=3$ is symmetric. By assumption, $H_2$ admits a $6$-over-$8$ set with $w$, say $\mathcal I^w_{H_2}$. Then we can define $\mathcal I_H:=\mathcal I^w_{H_2}\cup \{t\}$, where $t$ is the vertex of $H_1$ different from $u$ and $w$. Note that $|\mathcal I_H|/|V(H)|=\frac{7}{10}>\frac{2}{3}$. Since $\mathcal I^w_{H_2}$ is a $6$-over-$8$ set with $w$ and since $w$ is the only vertex of $H_2$ adjacent to $t$, we have that $G[\mathcal I_H]$ has degree at most $3$; in particular, $w$ has degree at most $2$ in $G[\mathcal I^w_{H_2}]$, hence at most $3$ in $G[\mathcal I_H]$. Since $u$ and $v$ do not belong to $\mathcal I_H$, by \cref{le:tool-degree-k}, the set $\mathcal I:=\mathcal I_H\cup \mathcal I_{\overline{H}}$ contains at least $\frac{2}{3}n$ vertices and $G[\mathcal I]$ has degree at most~$3$, as desired. 
	\item Suppose that $h_1=4$ and $h_2=7$, the case in which $h_1=7$ and $h_2=4$ is symmetric. By assumption, $H_2$ admits a $5$-over-$7$ set with $w$, say $\mathcal I^w_{H_2}$. Then we can define $\mathcal I_H:=\mathcal I^w_{H_2}\cup \{t_1,t_2\}$, where $t_1$ and $t_2$ are the vertices of $H_1$ different from $u$ and $w$. Note that $|\mathcal I_H|/|V(H)|=\frac{7}{10}>\frac{2}{3}$. Since $\mathcal I^w_{H_2}$ is a $5$-over-$7$ set with $w$ and since $w$ is the only vertex of $H_2$ adjacent to $t_1$ and $t_2$, we have that $G[\mathcal I_H]$ has degree at most $3$; in particular, $w$ has degree at most $1$ in $G[\mathcal I^w_{H_2}]$, hence at most $3$ in $G[\mathcal I_H]$. Since $u$ and $v$ do not belong to $\mathcal I_H$, by \cref{le:tool-degree-k}, the set $\mathcal I:=\mathcal I_H\cup \mathcal I_{\overline{H}}$ contains at least $\frac{2}{3}n$ vertices and $G[\mathcal I]$ has degree at most~$3$, as desired. 
\end{itemize} 
We henceforth assume that $G$ has no edge $uv$ such that a $uv$-split subgraph~of~$G$~has~$10$~vertices.  

{\bf Case h=11:}  Suppose that $G$ contains an edge $uv$ such that a $uv$-split subgraph $H$ of $G$ has $h=11$ vertices. Since $h_1+h_2=h+1$ and $G$ has no edge $xy$ such that an $xy$-split subgraph of $G$ has $6$, $9$, or $10$ vertices, we have that either one of $h_1$ and $h_2$ is $4$ and the other one is $8$, or one of $h_1$ and $h_2$ is $5$ and the other one is $7$. 

\begin{itemize}
    \item Suppose that $h_1=4$ and $h_2=8$, the case in which $h_1=8$ and $h_2=4$ is symmetric. By assumption, $H_2$ admits a $6$-over-$8$ set with $w$, say $\mathcal I^w_{H_2}$, and a $6$-over-$8$ set with $v$, say $\mathcal I^{v}_{H_2}$. Also by assumption, we have that $\mathcal I^w_{H_2}$ is actually a $6$-over-$8$ set with $w$-deg-$1$ or that $\mathcal I^{v}_{H_2}$ is actually a $6$-over-$8$ set with $v$-deg-$1$. 
 
    Suppose first that $\mathcal I^w_{H_2}$ is a $6$-over-$8$ set with $w$-deg-$1$. Then we can define $\mathcal I_H:=\mathcal I^w_{H_2}\cup \{t_1,t_2\}$, where $t_1$ and $t_2$ are the vertices of $H_1$ different from $u$ and $w$. Note that $|\mathcal I_H|/|V(H)|=\frac{8}{11}>\frac{2}{3}$. Since $\mathcal I^w_{H_2}$ is a $6$-over-$8$ set with $w$-deg-$1$ and since $w$ is the only vertex of $H_2$ adjacent to $t_1$ and $t_2$, we have that $G[\mathcal I_H]$ has degree at most $3$; in particular, $w$ has degree at most $1$ in $G[\mathcal I^w_{H_2}]$, hence at most $3$ in $G[\mathcal I_H]$. Since $u$ and $v$ do not belong to $\mathcal I_H$, by \cref{le:tool-degree-k}, the set $\mathcal I:=\mathcal I_H\cup \mathcal I_{\overline{H}}$ contains at least $\frac{2}{3}n$ vertices and $G[\mathcal I]$ has degree at most~$3$, as desired. 
    
    Suppose next that $\mathcal I^{v}_{H_2}$ is a $6$-over-$8$ set with $v$-deg-$1$. If $w$ has at most one neighbor among $t_1$ and $t_2$, then we can define $\mathcal I_H:= \mathcal I^w_{H_2}\cup \{t_1,t_2\}$ and conclude the induction as above, hence we can assume that $w$ is adjacent to both $t_1$ and $t_2$. This implies that $u$ is a neighbor of at most one of $t_1$ and $t_2$. It follows that $H$ admits a \emph{$8$-over-$11$ set with $u$} and a \emph{$8$-over-$11$ set with $v$}. The former is a set $\mathcal I_H$ of vertices such that $|\mathcal I_H|=8$, such that $u\in \mathcal I_H$ and $v\notin \mathcal I_H$, such that $G[\mathcal I_H]$ has degree at most $3$, and such that the degree of $u$ in $G[\mathcal I_H]$ is at most~$1$. Indeed, the set $(\mathcal I^w_{H_2}\setminus \{w\})\cup \{u,t_1,t_2\}$ satisfies these properties. The definition of a $8$-over-$11$ set with $v$ is analogous, with $v$ in place of $u$ and vice versa, and is realized by the set $\mathcal I^{v}_{H_2}\cup\{t_1,t_2\}$.


 \item Suppose that $h_1=5$ and $h_2=7$, the case in which $h_1=7$ and $h_2=5$ is symmetric. By assumption, $H_2$ admits a $5$-over-$7$ set with $w$, say $\mathcal I^w_{H_2}$, and a $5$-over-$7$ set with $v$, say $\mathcal I^v_{H_2}$.
 
 Suppose first that $w$ is adjacent to at most $2$ out of the $3$ vertices $t_1$, $t_2$, and $t_3$ of $H_1$ different from $u$ and $w$. Then we can define $\mathcal I_H:= \mathcal I^w_{H_2}\cup \{t_1,t_2,t_3\}$. Note that $|\mathcal I_H|/|V(H)|=\frac{8}{11}>\frac{2}{3}$. Since $\mathcal I^w_{H_2}$ is a $5$-over-$7$ set with $w$ and since $w$ is the only vertex of $H_2$ adjacent to $t_1$, $t_2$ or $t_3$, we have that $G[\mathcal I_H]$ has degree at most $3$; in particular, $w$ has degree at most $1$ in $G[\mathcal I^w_{H_2}]$, hence at most $3$ in $G[\mathcal I_H]$, by the assumption that it is not adjacent to all of $t_1$, $t_2$, and $t_3$. Since $u$ and $v$ do not belong to $\mathcal I_H$, by \cref{le:tool-degree-k}, the set $\mathcal I:=\mathcal I_H\cup \mathcal I_{\overline{H}}$ contains at least $\frac{2}{3}n$ vertices and $G[\mathcal I]$ has degree at most~$3$, as desired. 
 
 Suppose next that $w$ is adjacent to all of $t_1$, $t_2$, and $t_3$. This implies that $u$ is a neighbor of at most one of $t_1$, $t_2$, and $t_3$. It follows that $H$ admits a $8$-over-$11$ set with $u$ and a $8$-over-$11$ set with $v$. The former is realized by the set $(\mathcal I^w_{H_2}\setminus \{w\})\cup \{u,t_1,t_2,t_3\}$ and the latter by the set $\mathcal I^{v}_{H_2}\cup\{t_1,t_2,t_3\}$. 
\end{itemize} 

Although we cannot exclude the existence of a $uv$-split subgraph with $11$ vertices, we will use the existence of a $8$-over-$11$ set with $u$ and of a $8$-over-$11$ set with $v$ in later cases.


{\bf Case h=12:}  Suppose that $G$ contains an edge $uv$ such that a $uv$-split subgraph $H$ of $G$ has $h=12$ vertices. Since $h_1+h_2=h+1$ and $G$ has no edge $xy$ such that an $xy$-split subgraph of $G$ has $6$, $9$, or $10$ vertices, we have that either one of $h_1$ and $h_2$ is $2$ and the other one is $11$, or one of $h_1$ and $h_2$ is $5$ and the other one is $8$. 

\begin{itemize}
    \item Suppose that $h_1=2$ and $h_2=11$, the case in which $h_1=11$ and $h_2=2$ is symmetric. By assumption, $H_2$ admits a $8$-over-$11$ set with $w$, say $\mathcal I^w_{H_2}$. Then we can define $\mathcal I_H:= \mathcal I^w_{H_2}$. Note that $|\mathcal I_H|/|V(H)|=\frac{8}{12}=\frac{2}{3}$. Since $\mathcal I^w_{H_2}$ is a $8$-over-$11$ set with $w$, we have that $G[\mathcal I_H]$ has degree at most $3$. Since $u$ and $v$ do not belong to $\mathcal I_H$, by \cref{le:tool-degree-k}, the set $\mathcal I:=\mathcal I_H\cup \mathcal I_{\overline{H}}$ contains at least $\frac{2}{3}n$ vertices and $G[\mathcal I]$ has degree at most~$3$, as desired. 
    
    \item Suppose that $h_1=5$ and $h_2=8$, the case in which $h_1=8$ and $h_2=5$ is symmetric. By assumption, $H_2$ admits a $6$-over-$8$ set with $w$, say $\mathcal I^w_{H_2}$. Let $\mathcal I_{H_2}$ be the set $\mathcal I^w_{H_2}\setminus \{w\}$. Then we can define $\mathcal I_H:=\mathcal I_{H_2}\cup \{t_1,t_2,t_3\}$, where $t_1$, $t_2$, and $t_3$ are the vertices of $H_1$ different from $u$ and $w$. Note that $|\mathcal I_{H_2}|=5$, hence $|\mathcal I_H|/|V(H)|=\frac{8}{12}=\frac{2}{3}$. Since $u$ and $w$ are the only vertices of $H_1$ with neighbors in $H_2$, each of $t_1$, $t_2$, and $t_3$ has degree at most $2$ in $G[\mathcal I_H]$. Since $\mathcal I^w_{H_2}$ is a $6$-over-$8$ set, we have that $G[\mathcal I^w_{H_2}]$ and then also $G[\mathcal I_{H_2}]$ has degree at most $3$. Since $v$ and $w$ are the only vertices of $H_2$ with neighbors in $H_1$, it follows that every vertex in $\mathcal I_{H_2}$ has degree at most $3$ in $G[\mathcal I_H]$. Since $u$ and $v$ do not belong to $\mathcal I_H$, by \cref{le:tool-degree-k}, the set $\mathcal I:=\mathcal I_H\cup \mathcal I_{\overline{H}}$ contains at least $\frac{2}{3}n$ vertices and $G[\mathcal I]$ has degree at most~$3$, as desired. 
\end{itemize}

We henceforth assume that $G$ has no edge $uv$ such that a $uv$-split subgraph of $G$ has $12$ vertices. 

{\bf Case h=13:}  Suppose that $G$ contains an edge $uv$ such that a $uv$-split subgraph $H$ of $G$ has $h=13$ vertices. Since $h_1+h_2=h+1$ and $G$ has no edge $xy$ such that an $xy$-split subgraph of $G$ has $6$, $9$, $10$, or $12$ vertices, we have that either one of $h_1$ and $h_2$ is $3$ and the other one is $11$, or $h_1=h_2=7$. 

\begin{itemize}
    \item Suppose that $h_1=3$ and $h_2=11$, the case in which $h_1=11$ and $h_2=3$ is symmetric. By assumption, $H_2$ admits a $8$-over-$11$ set with $w$, say $\mathcal I^w_{H_2}$. Then we can define $\mathcal I_H:=\mathcal I^w_{H_2}\cup \{t_1\}$, where $t_1$ is the vertex of $H_1$ different from $u$ and $w$. Note that $|\mathcal I_H|/|V(H)|=\frac{9}{13}>\frac{2}{3}$. Since $\mathcal I^w_{H_2}$ is a $8$-over-$11$ set with $w$, we have that $G[\mathcal I_H]$ has degree at most $3$; in particular, $w$ has at most one neighbor in $\mathcal I^w_{H_2}$, hence it has degree at most $2$ in $G[\mathcal I_H]$. Since $u$ and $v$ do not belong to $\mathcal I_H$, by \cref{le:tool-degree-k}, the set $\mathcal I:=\mathcal I_H\cup \mathcal I_{\overline{H}}$ contains at least $\frac{2}{3}n$ vertices and $G[\mathcal I]$ has degree at most~$3$, as desired. 
    
    \item Suppose that $h_1=h_2=7$. By assumption, $H_1$ admits a $5$-over-$7$ set with $w$, say $\mathcal I^w_{H_1}$, and $H_2$ admits a $5$-over-$7$ set with $w$, say $\mathcal I^w_{H_2}$. Then we can define $\mathcal I_H:=\mathcal I^w_{H_1}\cup \mathcal I^w_{H_2}$. Note that $|\mathcal I_H|/|V(H)|=\frac{9}{13}>\frac{2}{3}$. Since $\mathcal I^w_{H_1}$ and $\mathcal I^w_{H_2}$ are $5$-over-$7$ sets with $w$, since $u$ and $w$ are the only vertices of $H_1$ with neighbors in $H_2$, and since $v$ and $w$ are the only vertices of $H_2$ with neighbors in $H_1$, we have that $G[\mathcal I_H]$ has degree at most $3$; in particular, $w$ has degree at most $1$ in each of $G[\mathcal I^w_{H_1}]$ and $G[\mathcal I^w_{H_2}]$, hence at most $2$ in $G[\mathcal I_H]$. Since $u$ and $v$ do not belong to $\mathcal I_H$, by \cref{le:tool-degree-k}, the set $\mathcal I:=\mathcal I_H\cup \mathcal I_{\overline{H}}$ contains at least $\frac{2}{3}n$ vertices and $G[\mathcal I]$ has degree at most~$3$, as desired. 
\end{itemize}

 We henceforth assume that $G$ has no edge $uv$ such that a $uv$-split subgraph of $G$ has $13$ vertices. 

{\bf Case h=14:}  Suppose that $G$ contains an edge $uv$ such that a $uv$-split subgraph $H$ of $G$ has $h=14$ vertices. Since $h_1+h_2=h+1$ and $G$ has no edge $xy$ such that an $xy$-split subgraph of $G$ has $6$, $9$, $10$, $12$, or $13$ vertices, we have that either one of $h_1$ and $h_2$ is $4$ and the other one is $11$, or one of $h_1$ and $h_2$ is $7$ and the other one is $8$. 

\begin{itemize}
    \item Suppose that $h_1=4$ and $h_2=11$, the case in which $h_1=11$ and $h_2=4$ is symmetric. By assumption, $H_2$ admits a $8$-over-$11$ set with $w$, say $\mathcal I^w_{H_2}$. Then we can define $\mathcal I_H:=\mathcal I^w_{H_2}\cup \{t_1,t_2\}$, where $t_1$ and $t_2$ are the vertices of $H_1$ different from $u$ and $w$. Note that $|\mathcal I_H|/|V(H)|=\frac{10}{14}>\frac{2}{3}$. Since $\mathcal I^w_{H_2}$ is a $8$-over-$11$ set with $w$, we have that $G[\mathcal I_H]$ has degree at most $3$; in particular, $w$ has at most one neighbor in $\mathcal I^w_{H_2}$, hence it has degree at most $3$ in $G[\mathcal I_H]$. Since $u$ and $v$ do not belong to $\mathcal I_H$, by \cref{le:tool-degree-k}, the set $\mathcal I:=\mathcal I_H\cup \mathcal I_{\overline{H}}$ contains at least $\frac{2}{3}n$ vertices and $G[\mathcal I]$ has degree at most~$3$, as desired. 
    
    \item Suppose that $h_1=7$ and $h_2=8$, the other case is symmetric. By assumption, $H_1$ admits a $5$-over-$7$ set with $w$, say $\mathcal I^w_{H_1}$, and $H_2$ admits a $6$-over-$8$ set with $w$, say $\mathcal I^w_{H_2}$. Then we can define $\mathcal I_H:=\mathcal I^w_{H_1}\cup \mathcal I^w_{H_2}$. Note that $|\mathcal I_H|/|V(H)|=\frac{10}{14}>\frac{2}{3}$. Since $\mathcal I^w_{H_1}$ is a $5$-over-$7$ set with $w$, since $\mathcal I^w_{H_2}$ is a $6$-over-$8$ set with $w$, since $u$ and $w$ are the only vertices of $H_1$ with neighbors in $H_2$, and since $v$ and $w$ are the only vertices of $H_2$ with neighbors in $H_1$, we have that $G[\mathcal I_H]$ has degree at most $3$; in particular, $w$ has degree at most $1$ in $G[\mathcal I^w_{H_1}]$ and at most $2$ in $G[\mathcal I^w_{H_2}]$, hence at most $3$ in $G[\mathcal I_H]$. Since $u$ and $v$ do not belong to $\mathcal I_H$, by \cref{le:tool-degree-k}, the set $\mathcal I:=\mathcal I_H\cup \mathcal I_{\overline{H}}$ contains at least $\frac{2}{3}n$ vertices and $G[\mathcal I]$ has degree at most~$3$, as desired. 
\end{itemize}

We henceforth assume that $G$ has no edge $uv$ such that a $uv$-split subgraph of $G$ has $14$ vertices. 

{\bf Case h=15:}  Suppose that $G$ contains an edge $uv$ such that a $uv$-split subgraph $H$ of $G$ has $h=15$ vertices. Since $h_1+h_2=h+1$ and $G$ has no edge $xy$ such that an $xy$-split subgraph of $G$ has $6$, $9$, $10$, $12$, $13$, or $14$ vertices, we have that either one of $h_1$ and $h_2$ is $5$ and the other one is $11$, or $h_1=h_2=8$. 

\begin{itemize}
    \item Suppose that $h_1=5$ and $h_2=11$, the case in which $h_1=11$ and $h_2=5$ is symmetric. By assumption, $H_2$ admits a $8$-over-$11$ set with $w$, say $\mathcal I^w_{H_2}$. Then we can define $\mathcal I_H:=(\mathcal I^w_{H_2}\setminus\{w\})\cup \{t_1,t_2,t_3\}$, where $t_1$, $t_2$, and $t_3$ are the vertices of $H_1$ different from $u$ and $w$. Note that $|\mathcal I_H|/|V(H)|=\frac{10}{15}=\frac{2}{3}$. Since $\mathcal I^w_{H_2}$ is a $8$-over-$11$ set with $w$, we have that $G[\mathcal I_H]$ has degree at most $3$. Since $u$ and $v$ do not belong to $\mathcal I_H$, by \cref{le:tool-degree-k}, the set $\mathcal I:=\mathcal I_H\cup \mathcal I_{\overline{H}}$ contains at least $\frac{2}{3}n$ vertices and $G[\mathcal I]$ has degree at most~$3$, as desired. 
    
    \item Suppose that $h_1=h_2=8$. By assumption, $H_1$ admits a $6$-over-$8$ set with $w$, say $\mathcal I^w_{H_1}$, and $H_2$ admits a $6$-over-$8$ set with $w$, say $\mathcal I^w_{H_2}$. Let $\mathcal I_{H_1}$ be the set $\mathcal I^w_{H_1}\setminus \{w\}$ and let $\mathcal I_{H_2}$ be the set $\mathcal I^w_{H_2}\setminus \{w\}$. Then we can define $\mathcal I_H:=\mathcal I_{H_1}\cup \mathcal I_{H_2}$. Note that $|\mathcal I_H|/|V(H)|=\frac{10}{15}=\frac{2}{3}$. Since $u$ and $w$ are the only vertices of $H_1$ with neighbors in $H_2$, the degree in $G[\mathcal I_H]$ of each vertex in $\mathcal I_{H_1}$ is the same as its degree in $G[\mathcal I_{H_1}]$, hence at most $3$. Analogously,  the degree in $G[\mathcal I_H]$ of each vertex in $\mathcal I_{H_2}$ is at most $3$. Since $u$ and $v$ do not belong to $\mathcal I_H$, by \cref{le:tool-degree-k}, the set $\mathcal I:=\mathcal I_H\cup \mathcal I_{\overline{H}}$ contains at least $\frac{2}{3}n$ vertices and $G[\mathcal I]$ has degree at most~$3$, as desired.  
\end{itemize}

We henceforth assume that $G$ has no edge $uv$ such that a $uv$-split subgraph of $G$ has $15$ vertices. 

{\bf Case h=16:}  We observe that, with the current assumptions, $G$ does not contain any edge $uv$ such that a $uv$-split subgraph $H$ of $G$ has $h=16$ vertices. Indeed, since $h_1+h_2=h+1$, if $H$ had $h=16$ vertices, one of $H_1$ and $H_2$ would contain $6$, $9$, $10$, $12$, $13$, $14$, or $15$ vertices, which we already excluded.

{\bf Case h=17:}  Suppose that $G$ contains an edge $uv$ such that a $uv$-split subgraph $H$ of $G$ has $h=17$ vertices. Since $h_1+h_2=h+1$ and $G$ has no edge $xy$ such that an $xy$-split subgraph of $G$ has $9$, $10$, $12$, $13$, $14$, $15$, or $16$ vertices, we have that one of $h_1$ and $h_2$ is $7$ and the other one is $11$. Suppose that $h_1=7$ and $h_2=11$, as the other case is symmetric. By assumption, $H_1$ admits a $5$-over-$7$ set with $w$, say $\mathcal I^w_{H_1}$, and $H_2$ admits a $8$-over-$11$ set with $w$, say $\mathcal I^w_{H_2}$. Then we can define $\mathcal I_H:=\mathcal I^w_{H_1}\cup \mathcal I^w_{H_2}$. Note that $|\mathcal I_H|/|V(H)|=\frac{12}{17}>\frac{2}{3}$. Since $\mathcal I^w_{H_1}$ is a $5$-over-$7$ set with $w$, since $\mathcal I^w_{H_2}$ is a $8$-over-$11$ set with $w$, since $u$ and $w$ are the only vertices of $H_1$ with neighbors in $H_2$, and since $v$ and $w$ are the only vertices of $H_2$ with neighbors in $H_1$, we have that $G[\mathcal I_H]$ has degree at most $3$; in particular, $w$ has degree at most $1$ in each of $G[\mathcal I^w_{H_1}]$ and $G[\mathcal I^w_{H_2}]$, hence at most $2$ in $G[\mathcal I_H]$. Since $u$ and $v$ do not belong to $\mathcal I_H$, by \cref{le:tool-degree-k}, the set $\mathcal I:=\mathcal I_H\cup \mathcal I_{\overline{H}}$ contains at least $\frac{2}{3}n$ vertices and $G[\mathcal I]$ has degree at most~$3$, as desired. We henceforth assume that $G$ has no edge $uv$ such that a $uv$-split subgraph of $G$ has $17$ vertices. 

{\bf Case h=18:}  Suppose that $G$ contains an edge $uv$ such that a $uv$-split subgraph $H$ of $G$ has $h=18$ vertices. Since $h_1+h_2=h+1$ and $G$ has no edge $xy$ such that an $xy$-split subgraph of $G$ has $10$, $12$, $13$, $14$, $15$, $16$, or $17$ vertices, we have that one of $h_1$ and $h_2$ is $8$ and the other one is $11$. Suppose that $h_1=8$ and $h_2=11$, as the other case is symmetric. By assumption, $H_1$ admits a $6$-over-$8$ set with $w$, say $\mathcal I^w_{H_1}$, and $H_2$ admits a $8$-over-$11$ set with $w$, say $\mathcal I^w_{H_2}$. Then we can define $\mathcal I_H:=\mathcal I^w_{H_1}\cup \mathcal I^w_{H_2}$. Note that $|\mathcal I_H|/|V(H)|=\frac{13}{18}>\frac{2}{3}$. Since $\mathcal I^w_{H_1}$ is a $6$-over-$8$ set with $w$, since $\mathcal I^w_{H_2}$ is a $8$-over-$11$ set with $w$, since $u$ and $w$ are the only vertices of $H_1$ with neighbors in $H_2$, and since $v$ and $w$ are the only vertices of $H_2$ with neighbors in $H_1$, we have that $G[\mathcal I_H]$ has degree at most $3$; in particular, $w$ has degree at most $2$ in $G[\mathcal I^w_{H_1}]$ and at most $1$ in $G[\mathcal I^w_{H_2}]$, hence at most $3$ in $G[\mathcal I_H]$. Since $u$ and $v$ do not belong to $\mathcal I_H$, by \cref{le:tool-degree-k}, the set $\mathcal I:=\mathcal I_H\cup \mathcal I_{\overline{H}}$ contains at least $\frac{2}{3}n$ vertices and $G[\mathcal I]$ has degree at most~$3$, as desired. We henceforth assume that $G$ has no edge $uv$ such that a $uv$-split subgraph of $G$ has $18$ vertices. 

{\bf Case h=19:}  We observe that, with the current assumptions, $G$ does not contain any edge $uv$ such that a $uv$-split subgraph $H$ of $G$ has $h=19$ vertices. Indeed, since $h_1+h_2=h+1$, if $H$ had $h=19$ vertices, one of $H_1$ and $H_2$ would contain $9$, $10$, $12$, $13$, $14$, $15$, $16$, $17$, or $18$ vertices, which we already excluded.

{\bf Case h=20:}  We observe that, with the current assumptions, $G$ does not contain any edge $uv$ such that a $uv$-split subgraph $H$ of $G$ has $h=20$ vertices. Indeed, since $h_1+h_2=h+1$, if $H$ had $h=20$ vertices, one of $H_1$ and $H_2$ would contain $10$, $12$, $13$, $14$, $15$, $16$, $17$, $18$, or $19$ vertices, which we already excluded.

{\bf Case h=21:} By \cref{le:decompose-outerplanar}, applied with $\ell=10$, since $G$ has no edge $uv$ such that a $uv$-split subgraph of $G$ has a number of vertices between $12$ and $20$, we have that $G$ has an edge $uv$ such that a $uv$-split subgraph has $h=21$ vertices. Since $h_1+h_2=h+1$ and $G$ has no edge $xy$ such that an $xy$-split subgraph of $G$ has a number of vertices between $12$ and $20$, we have that $h_1$ and $h_2$ are both $11$. By assumption, $H_1$ admits a $8$-over-$11$ set with $w$, say $\mathcal I^w_{H_1}$, and $H_2$ admits a $8$-over-$11$ set with $w$, say $\mathcal I^w_{H_2}$. Then we can define $\mathcal I_H:=\mathcal I^w_{H_1}\cup \mathcal I^w_{H_2}$. Note that $|\mathcal I_H|/|V(H)|=\frac{15}{21}>\frac{2}{3}$. Since $\mathcal I^w_{H_1}$ and $\mathcal I^w_{H_2}$ are $8$-over-$11$ sets with $w$, it follows that the degree of $w$ in each of $G[\mathcal I^w_{H_1}]$ and $G[\mathcal I^w_{H_2}]$ is at most $1$, hence its degree in $G[\mathcal I_H]$ is at most $2$. Since the degree in $G[\mathcal I_H]$ of every vertex in $\mathcal I^w_{H_i}\setminus \{w\}$ is the same as its degree in $G[\mathcal I^w_{H_i}]$, for $i=1,2$, it follows that the degree of $G[\mathcal I_H]$ is at most $3$. Since $u$ and $v$ do not belong to $\mathcal I_H$, by \cref{le:tool-degree-k}, the set $\mathcal I:=\mathcal I_H\cup \mathcal I_{\overline{H}}$ contains at least $\frac{2}{3}n$ vertices and $G[\mathcal I]$ has degree at most~$3$, as desired. 
\end{proof}


We conclude with the theorem for induced subgraphs with degree at most $k\geq 4$.

\begin{theorem} \label{th:outerplanar-lb-k}
	For every $n\geq 1$, for every $n$-vertex outerplanar graph $G$, and for every integer $k\geq 4$, there exists a set $\mathcal I \subseteq V(G)$ such that $G[\mathcal I]$ has degree at most $k$ and $|\mathcal I|\geq \frac{2k+1}{2k+5}n$ (if $k$ is even) or $|\mathcal I|\geq \frac{2k+2/3}{2k+14/3}n$ (if $k$ is odd).
\end{theorem}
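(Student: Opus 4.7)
The plan is to follow the blueprint of \cref{th:outerplanar-lb-3}: reduce to the case where $G$ is a maximal outerplanar graph, induct on $n$, and at each inductive step apply \cref{le:decompose-outerplanar} to extract a $uv$-split subgraph $H$ of bounded size. Inside $H$ we select a set $\mathcal{I}_H\subseteq V(H)\setminus\{u,v\}$ with $|\mathcal{I}_H|\geq c\cdot |V(H)|$ and such that $G[\mathcal{I}_H]$ has degree at most $k$, where $c=\tfrac{2k+1}{2k+5}$ for even $k$ and $c=\tfrac{3k+1}{3k+7}$ for odd $k$. Invoking the inductive hypothesis on $\overline{H}$ to build $\mathcal I_{\overline H}$ and gluing via \cref{le:tool-degree-k} then yields the theorem.

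The observation driving the choice of constants is the following: if $H$ has $h$ vertices and we set $\mathcal{I}_H:=V(H)\setminus\{u,v\}$, then every vertex of $\mathcal I_H$ keeps all its $H$-neighbors except the apex $w$ of the triangle $(u,v,w)$ adjacent to $uv$ in $\mathcal O_H$, whose degree in $G[\mathcal I_H]$ is $\deg_H(w)-2$. A short calculation shows $(h-2)/h\geq c$ as soon as $h\geq k+3$, in both parities (solving $(2k+5)(h-2)\geq (2k+1)h$ and $(3k+7)(h-2)\geq (3k+1)h$). Hence, whenever \cref{le:decompose-outerplanar} produces (for a suitable $\ell$ near $k+1$) a split subgraph of size $h\in[k+3,2k+5]$ in which $\deg_H(w)\leq k+2$, the naive choice $\mathcal I_H=V(H)\setminus\{u,v\}$ already does the job.

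The substantive case is when $\deg_H(w)>k+2$, i.e.\ $w$ is a high-degree apex supporting a large fan inside $H$. We then split $H$ further at $w$ into $H_1$ (the $uw$-split of $G$ not containing $v$) and $H_2$ (the $vw$-split not containing $u$), and assemble $\mathcal I_H$ from selections in $H_1$ and $H_2$. As in \cref{th:outerplanar-lb-3}, the right tool is a small catalogue of \emph{partial selections} of small split subgraphs, analogous to the $N$-over-$M$ sets used there: each partial selection records, for some subgraph $F$, a choice $\mathcal I_F$ of prescribed size that contains or avoids each of the distinguished boundary vertices $u,v,w$ and that bounds the residual degree at the retained ones, so that two such selections sharing $w$ can be glued along $w$ without exceeding degree $k$. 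The catalogue is built inductively on the size of the split subgraph, using \cref{le:decompose-outerplanar} to recurse on strictly smaller pieces, and exhausts the finitely many small sizes produced by that lemma.

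The base case will handle $n$ up to a constant $n_0(k)$ by a degree-counting argument analogous to the base case of \cref{th:outerplanar-lb-3}: starting from $\mathcal I=V(G)$ and iteratively removing a vertex of degree $>k$ in $G[\mathcal I]$, each such removal erases at least two chords of $\mathcal O_G$, and $\mathcal O_G$ has only $n-3$ chords, so only few vertices are removed. The main obstacle will be the size of the case analysis needed to define the partial selections in the high-degree-apex case: the framework parallels that of \cref{th:outerplanar-lb-3}, but both the number of partial-selection types and the arithmetic that verifies the gluing bound at $w$ grow with $k$, and an extra parity-dependent bookkeeping appears when $\deg_H(w)$ is odd, which is precisely what forces the weaker constant $\tfrac{3k+1}{3k+7}$ in place of $\tfrac{2k+1}{2k+5}$ in the odd case.
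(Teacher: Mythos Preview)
Your framework matches the paper's: reduce to maximal outerplanar, induct, extract a bounded-size split subgraph $H$ via \cref{le:decompose-outerplanar}, build $\mathcal I_H\subseteq V(H)\setminus\{u,v\}$, and glue via \cref{le:tool-degree-k}; the catalogue of partial selections you describe corresponds to the paper's $(h-2)$-over-$h$ and $(h-1)$-over-$h$ sets. Two concrete points are off, though.

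\textbf{The easy-case criterion is wrong.} You claim that when $\deg_H(w)\leq k+2$ the naive choice $\mathcal I_H=V(H)\setminus\{u,v\}$ works because only $w$ needs attention. But $w$ is not the only potentially high-degree vertex: if $H_1$ (the $uw$-split subgraph not containing $v$) has $h_1>k+2$ vertices, then a vertex $z\in V(H_1)\setminus\{u,w\}$ has all its neighbors inside $H_1$ and can have degree up to $h_1-1>k$ in $G[\mathcal I_H]$, since removing only $u$ does not help enough. The paper's actual easy case requires both $h_1\leq k+2$ and $h_2\leq k+2$ (together with the bound on $w$); when one side is large the paper recurses into it using the partial selections, regardless of $\deg_H(w)$. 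Relatedly, the paper does not take $\ell$ near $k+1$: it uses $\ell=\tfrac{3k}{2}+2$, so $h$ ranges up to $3k+5$, and organizes the argument into three statements (S1)--(S3) depending on whether $h=k+3$, $k+4\le h\le \tfrac{3k}{2}+3$, or $\tfrac{3k}{2}+4\le h\le 3k+5$, allowing $|\mathcal I_H|$ to be $h-2$, $h-3$, or $h-4$ respectively. The structural point your sketch misses is that (S2) proves an alternative: either a good set avoiding $u,v$ exists, \emph{or} the partial selections \emph{including} $u$ and/or $v$ with controlled boundary degree exist; the latter are then consumed in (S3).

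\textbf{The parity explanation is wrong.} The weaker constant for odd $k$ has nothing to do with the parity of $\deg_H(w)$. It comes from the threshold $\tfrac{3k}{2}+3$ that separates the $h-2$ regime from the $h-3$ regime: when $k$ is odd this is non-integral and must be replaced by $\tfrac{3k-1}{2}+3$, and the ratio $(h-3)/h$ at the shifted boundary $h=\tfrac{3k-1}{2}+4$ equals exactly $\tfrac{3k+1}{3k+7}$, just short of $\tfrac{2k+1}{2k+5}$.
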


\begin{proof}
Let $G$ be an $n$-vertex maximal outerplanar graph and let $k$ be an integer greater than or equal to $4$. We assume first that $k$ is even, we will discuss later the case in which $k$ is odd. 

The proof is by induction on $n$. In the base case we have $n\leq \frac{3k}{2}+3$. If $n\leq k+1$, then we can set $\mathcal I := V(G)$ and obviously $G[\mathcal I]=G$ has degree at most $k$ and $|\mathcal I|> \frac{2k+1}{2k+5}n$. If $k+2\leq n \leq \frac{3k}{2}+3$, then the proof is similar to the one for the base case of \cref{th:outerplanar-lb-3}. Namely, we can initialize $\mathcal I=V(G)$ and then repeatedly remove from $\mathcal I$ a vertex with degree at least~$k+1$ in $G[\mathcal I]$, while such a vertex exists. The removal of the first vertex from~$\mathcal I$ removes at least $k$ internal edges from $G[\mathcal I]$ (by the same arguments presented for the case $n=11$ in the proof of \cref{th:outerplanar-lb-3}), while the removal of the second vertex from~$\mathcal I$ removes at least $k-1$ internal edges from $G[\mathcal I]$. Since $G$ has $n-3\leq \frac{3k}{2}$ internal edges, and since $2k-1>\frac{3k}{2}$ for $k>2$, it follows that a single vertex is removed from $\mathcal I$. Hence, $|\mathcal I|=n-1\geq  \frac{2k+1}{2k+5}n$, where the last inequality is equivalent to $n\geq \frac{2k+5}{4}$, which holds true by the assumption $n\geq k+2$ and since $k+2\geq \frac{2k+5}{4}$ for $k\geq 0$.

In the inductive case, we have $n\geq \frac{3k}{2}+4$. As in \cref{th:outerplanar-lb-3}, the general strategy consists of defining a $uv$-split subgraph~$H$ of~$G$ with a suitable size and of then selecting a large set $\mathcal I_H\subseteq V(H)$ of vertices, with $u,v \notin \mathcal I_H$, such that $G[\mathcal I_H]$ has degree at most $k$. Applying induction on~$\overline{H}$ to get a set $\mathcal I_{\overline H}$ and then applying \cref{le:tool-degree-k} allows us to obtain the desired set~$\mathcal I$. 

Let $H$ be a $uv$-split subgraph of $G$ with $h$ vertices. A set $\mathcal I_H\subseteq V(H)$ is called \emph{good} for~$H$ if it contains neither $u$ nor $v$, if $G[\mathcal I_H]$ has degree at most $k$, and if:
\begin{itemize}
\item $k+3\leq h\leq \frac{3k}{2}+3$ and $|\mathcal I_H|\geq h-2$; or
\item $\frac{3k}{2}+4\leq h\leq 2k+4$ and $|\mathcal I_H|\geq h-3$; or 
\item $2k+5\leq h\leq 3k+5$ and $|\mathcal I_H|\geq h-4$. 
\end{itemize}
Also, a set $\mathcal I_H\subseteq V(H)$ is an \emph{$(h-2)$-over-$h$ set with $u$} if $|\mathcal I_H|=h-2$, if $u\in \mathcal I_H$ and $v\notin \mathcal I_H$, if $G[\mathcal I_H]$ has degree at most $k$, and if the degree of $u$ in $G[\mathcal I_H]$ is at most $h-k-3$. An \emph{$(h-2)$-over-$h$ set with $v$} is defined analogously,
with $v$ in place of $u$ and vice versa. Finally, a set $\mathcal I_H\subseteq V(H)$ is an \emph{$(h-1)$-over-$h$ set with $u$ and $v$} if $|\mathcal I_H|=h-1$, if $u,v\in \mathcal I_H$, if $G[\mathcal I_H]$ has degree at most $k$, and if the degree of $u$ and $v$ in $G[\mathcal I_H]$ is at most $h-k-2$.

We are going to prove the following three statements.
\begin{itemize}
\item {\bf (S1)} If $h=k+3$, then there exists a good set for $H$.
\item {\bf (S2)} Suppose that:
\begin{itemize}
\item $k+4\leq h\leq \frac{3k}{2}+3$;
\item $G$ has no $st$-split subgraph with $k+3$ vertices; and
\item for every integer $j$ with $k+4\leq j\leq h-1$ and for every $st$-split subgraph $J$ of $G$ with~$j$ vertices, we have that $J$ has a $(j-2)$-over-$j$ set with $s$, a $(j-2)$-over-$j$ set with $t$, and a $(j-1)$-over-$j$ set with $s$ and $t$. 
\end{itemize}
Then there exists a good set for $H$, or there exist an $(h-2)$-over-$h$ set with $u$, an $(h-2)$-over-$h$ set with $v$, and an $(h-1)$-over-$h$ set with $u$ and $v$.
\item {\bf (S3)} Suppose that:
\begin{itemize}
\item $\frac{3k}{2}+4\leq h\leq 3k+5$;
\item $G$ has no $st$-split subgraph with $k+3$ vertices;
\item for every integer $j$ with $k+4\leq j\leq \frac{3k}{2}+3$ and for every $st$-split subgraph $J$ of $G$ with $j$ vertices, we have that $J$ has a $(j-2)$-over-$j$ set with $s$, a $(j-2)$-over-$j$ set with $t$, and a $(j-1)$-over-$j$ set with $s$ and $t$; and
\item for every integer $j$ with $\frac{3k}{2}+4\leq j\leq h-1$, $G$ has no $st$-split subgraph with $j$ vertices. 
\end{itemize}
Then there exists a good set for $H$.
\end{itemize}

Before proving the statements, we show that they imply the theorem (if $k$ is even). 

First, suppose that $G$ has a $uv$-split subgraph $H$ with $k+3$ vertices. By statement (S1), there exists a good set $\mathcal I_H$ for $H$; by definition, $|\mathcal I_H|=k+1$. Hence, $\frac{|\mathcal I_H|}{|V(H)|}=\frac{k+1}{k+3}>\frac{2k+1}{2k+5}$, where the last inequality is true for every value of $k$. Also, $\mathcal I_H$ contains $k+1$ vertices, hence $G[\mathcal I_H]$ has degree at most~$k$. By \cref{le:tool-degree-k}, the set $\mathcal I:=\mathcal I_H\cup \mathcal I_{\overline{H}}$ contains at least $\frac{2k+1}{2k+5}n$ vertices and $G[\mathcal I]$ has degree at most~$k$, as desired. We henceforth assume that $G$ has no edge $uv$ such that a $uv$-split subgraph of $G$ has $k+3$ vertices. 

Second, suppose that $G$ has a $uv$-split subgraph $H$ with $h$ vertices, where $k+4\leq h\leq \frac{3k}{2}+3$, such that there exists no $(h-2)$-over-$h$ set with $u$, or there exists no $(h-2)$-over-$h$ set with~$v$, or there exists no $(h-1)$-over-$h$ set with $u$ and $v$. Among all such subgraphs, choose one such that $h$ is minimum. Then the assumptions of statement (S2) are met, thus there exists a good set $\mathcal I_H$ for~$H$. Hence, $\frac{|\mathcal I_H|}{|V(H)|}=\frac{h-2}{h}>\frac{k+1}{k+3}>\frac{2k+1}{2k+5}$, where the first inequality comes from $h\geq k+4$. Also, $G[\mathcal I_H]$ has degree at most~$k$ since $\mathcal I_H$ is a good set. By \cref{le:tool-degree-k}, the set $\mathcal I:=\mathcal I_H\cup \mathcal I_{\overline{H}}$ contains at least $\frac{2k+1}{2k+5}n$ vertices and $G[\mathcal I]$ has degree at most~$k$, as desired. We henceforth assume that for every integer $h$ with $k+4\leq h\leq \frac{3k}{2}+3$ and for every $uv$-split subgraph $H$ of $G$ with $h$ vertices, $H$ has an $(h-2)$-over-$h$ set with $u$, an $(h-2)$-over-$h$ set with $t$, and an $(h-1)$-over-$h$ set with $u$ and $v$.

Third, setting $\ell=\frac{3k}{2}+2$ and observing that $n\geq \ell+2$ by the hypothesis of the inductive case, \cref{le:decompose-outerplanar} ensures that $G$ contains a $uv$-split subgraph $H$ with a number $h$ of vertices between $\frac{3k}{2}+4$ and~$3k+5$.  Among all such subgraphs, choose one such that $h$ is minimum. Then the assumptions of statement (S3) are met, thus there exists a good set $\mathcal I_H$ for $H$.  If $\frac{3k}{2}+4\leq h\leq 2k+4$, then $\frac{|\mathcal I_H|}{|V(H)|}=\frac{h-3}{h}\geq \frac{(3k/2)+1}{(3k/2)+4}>\frac{2k+1}{2k+5}$, where the last inequality is true for $k\geq 0$, while if $2k+5\leq h\leq 3k+5$, then $\frac{|\mathcal I_H|}{|V(H)|}=\frac{h-4}{h}\geq \frac{2k+1}{2k+5}$. Also, $G[\mathcal I_H]$ has degree at most~$k$ since $\mathcal I_H$ is a good set. By \cref{le:tool-degree-k}, the set $\mathcal I:=\mathcal I_H\cup \mathcal I_{\overline{H}}$ contains at least $\frac{2k+1}{2k+5}n$ vertices and $G[\mathcal I]$ has degree at most~$k$, as desired. This completes the induction.

It remains to prove the three statements. We denote by $w$ the vertex such that the cycle $(u,v,w)$ bounds an internal face of $\mathcal O_H$, by $H_1$ the $uw$-split subgraph of $G$ not containing $v$, and by $H_2$ the $vw$-split subgraph of $G$ not containing $u$. We let $h_1$ and $h_2$ be the number of vertices of $H_1$ and $H_2$, respectively, where $h_1\geq 2$, $h_2\geq 2$,  and $h_1+h_2=h+1$. 

{\bf Statement (S1).} We define $\mathcal I_H:=V(H)\setminus\{u,v\}$. Obviously, $|\mathcal I_H|=k+1=h-2$ and $G[\mathcal I_H]$ has degree at most~$k$, given that $\mathcal I_H$ contains $k+1$ vertices, hence $\mathcal I_H$ is a good set.

{\bf Statement (S2).}  We distinguish three cases. 

\begin{itemize}
\item In the {\em first case}, vertex $w$ has at least $k+1$ neighbors in $V(H)\setminus \{u,v\}$. We prove that the set $\mathcal I^u_H:=V(H)\setminus\{w,v\}$ is an $(h-2)$-over-$h$ set with $u$. The proof exploits the fact that any two vertices of $H$ share at most two neighbors in $H$ (as otherwise such two vertices and their common neighbors would induce $K_{2,3}$, whereas $H$ is an outerplanar graph).

First, $|\mathcal I^u_H|=h-2$, as desired. Second, $u\in \mathcal I^u_H$ and $v\notin \mathcal I^u_H$, by construction. Third, we prove that any vertex $z\neq u$ of $\mathcal I^u_H$ has degree at most $k$ in $G[\mathcal I^u_H]$. Namely, $w$ has at least $k+2$ neighbors in $\mathcal I^u_H$ (at least $k+1$ in $V(H)\setminus \{u,v\}$, by hypothesis, plus $u$). One of such neighbors might be $z$ and at most $2$ of them might be shared with~$z$. Hence, out of the $h-3$ vertices of $\mathcal I^u_H$ different from~$z$, at least $(k-1)$ are different from~$z$ and are not neighbors of~$z$ in $G[\mathcal I^u_H]$, which proves that the degree of $z$ in $G[\mathcal I^u_H]$ is at most $(h-3)-(k-1)$. Since $h\leq \frac{3k}{2}+3$, we have $(h-3)-(k-1)\leq \frac{k}{2}+1\leq k$, where the last inequality is equivalent to $k\geq 2$. Finally, we prove that $u$ has degree at most $h-k-3$ in $G[\mathcal I^u_H]$. Note that $u$ and $w$ share $v$ as a neighbor in $H$, hence out of the at least $k+2$ neighbors of $w$ in $\mathcal I^u_H$, at most one of them is a neighbor of $u$. Hence, out of the $h-3$ vertices of $\mathcal I^u_H$ different from~$u$, at least $k$ are different from $u$ and are not neighbors of $u$ in $G[\mathcal I^u_H]$, which proves that the degree of $u$ in $G[\mathcal I^u_H]$ is at most $(h-3)-k$. Observe that $h-k-3\leq k$ since $h\leq \frac{3k}{2}+3$ and~$k\geq 0$. 

The proof that the set $\mathcal I^v_H:=V(H)\setminus\{u,w\}$ is an $(h-2)$-over-$h$ set with $v$ is symmetric. The proof that the set $\mathcal I^{uv}_H:=V(H)\setminus\{w\}$ is an $(h-1)$-over-$h$ set with $u$ and $v$ is also analogous; in particular, now the degree of any vertex $z\notin \{u,v\}$ of $\mathcal I^{uv}_H$ in $G[\mathcal I^{uv}_H]$ might be as large as $(h-2)-(k-1)=h-k-1$, that is, one more than in $G[\mathcal I^u_H]$. Since $h\leq \frac{3k}{2}+3$, we have $h-k-1\leq \frac{k}{2}+2\leq k$, where the last inequality is equivalent to $k\geq 4$. Also, the degree of $u$ in $\mathcal I^{uv}_H$ is one more than in $G[\mathcal I^u_H]$, since $u$ is a neighbor of $v$, hence it is at most $(h-2)-k$, as required.

\item In the {\em second case}, vertex $w$ has at most $k$ neighbors in $V(H)\setminus \{u,v\}$, $h_1\leq k+2$, and $h_2\leq k+2$. We prove that the set $\mathcal I_H:=V(H)\setminus\{u,v\}$ is a good set for $H$. First, $|\mathcal I_H|=h-2$, as desired. Second, $u,v\notin \mathcal I_H$, by construction. Third, we prove that any vertex $z\in \mathcal I_H$ has degree at most~$k$ in~$G[\mathcal I_H]$. If $z=w$, this comes from the assumption that $w$ has at most~$k$ neighbors in $\mathcal I_H$. If $z\neq w$ and $z\in V(H_1)$ (the case in which $z\neq w$ and $z\in V(H_2)$ is symmetric), then the neighbors of $z$ are all in $V(H_1)$. Since $h_1\leq k+2$ and since $u$ and $z$ are not neighbors of $z$, it follows that the degree of $z$ in $G[\mathcal I_H]$ is at most $k$, as required.

\item In the {\em third case}, vertex $w$ has at most $k$ neighbors in $V(H)\setminus \{u,v\}$, and $h_1\geq k+3$ or $h_2\geq k+3$. By the assumptions of statement (S2), $h_1\neq k+3$ and $h_2\neq k+3$, hence $h_1\geq k+4$ or $h_2\geq k+4$. Since $2k+8>\frac{3}{2}k+3$, for every $k\geq 0$, it follows that $h_1\geq k+4$ and $h_2\geq k+4$ do not both hold. Assume, w.l.o.g., that $h_1\geq k+4$, as the case $h_2\geq k+4$ can be discussed symmetrically. Since $2k+4>\frac{3}{2}k+3$, for every $k\geq 0$, we have $h_2\leq k-1$. 

We prove that $H$ has an $(h-2)$-over-$h$ set with $u$. By the assumptions of statement (S2) and since $k+4\leq h_1\leq h-1$, we have that $H_1$ admits an $(h_1-1)$-over-$h_1$ set with $u$ and $w$, which we denote by $\mathcal I^{uw}_{H_1}$. We show that $\mathcal I^u_H:=\mathcal I^{uw}_{H_1}\cup (V(H_2)\setminus \{v\})$ is an $(h-2)$-over-$h$ set with $u$. First, $|\mathcal I^u_H|=h-2$, given that $\mathcal I^{uw}_{H_1}$ contains all the vertices of $H_1$, except for one, and given that $V(H_2)\setminus \{v\}$ contains all the vertices of $H_2$, except for $v$. Second, $u\in \mathcal I^u_H$, given that $u\in \mathcal I^{uw}_{H_1}$, and $v\notin \mathcal I^u_H$, by construction. Third, we prove that any vertex $z\neq u$ of $\mathcal I^u_H$ has degree at most $k$ in $G[\mathcal I^u_H]$. If $z\in \mathcal I^{uw}_{H_1}$ with $z\neq w$, then all the neighbors of~$z$ are in~$V(H_1)$, hence $z$ has degree at most $k$ in $G[\mathcal I^u_H]$ since it has degree at most~$k$ in~$G[\mathcal I^{uw}_{H_1}]$. Also, if $z\in V(H_2)$ with $z\neq w$, then all the neighbors of $z$ are in $V(H_2)$, hence $z$ has degree at most~$k-3$ in $G[\mathcal I^u_H]$ since $V(H_2)$ contains at most $k-1$ vertices and $v$ is not in $\mathcal I^u_H$. If $z=w$, then $z$ has at most $h_1-k-2$ neighbors in $\mathcal I^{uw}_{H_1}$ and at most $h_2-2$ neighbors in $V(H_2)$ (that is, all the vertices of $V(H_2)$ except for $v$ and $w$ itself). Thus, its degree is at most $h_1-k-2+h_2-2=h-k-3\leq (\frac{3k}{2}+3) -k-3=\frac{k}{2}<k$. Finally, we prove that $u$ has degree at most $h-k-3$ in $G[\mathcal I^u_H]$. We have that all the neighbors of $u$ are in $\mathcal I^{uw}_{H_1}$, given that $w$ and $v$ are the only vertices of $H_2$ that are neighbors of $u$, that $w$ also belongs to $\mathcal I^{uw}_{H_1}$, and that $v$ does not belong to~$\mathcal I^u_H$. It follows that $u$ has degree at most $h_1-k-2\leq h-k-3$ in $G[\mathcal I^u_H]$, as required.

We next prove that $H$ has an $(h-2)$-over-$h$ set with $v$. Let $\mathcal I^{w}_{H_1}$ be an $(h_1-2)$-over-$h_1$ set with $w$ for $H_1$. We show that $\mathcal I^v_H:=\mathcal I^{w}_{H_1}\cup V(H_2)$ is an $(h-2)$-over-$h$ set with $v$. First, $|\mathcal I^v_H|=h-2$, given that $\mathcal I^{w}_{H_1}$ contains all the vertices of $H_1$, except for two. Second, $u\notin \mathcal I^v_H$, since $\mathcal I^{w}_{H_1}$ is an $(h_1-2)$-over-$h_1$ set with $w$, and $v\in \mathcal I^v_H$, by construction. Third, we prove that any vertex $z\neq v$ of $\mathcal I^v_H$ has degree at most $k$ in $G[\mathcal I^v_H]$. If $z\in \mathcal I^w_{H_1}$ with $z\neq w$, then all the neighbors of~$z$ are in~$V(H_1)$, hence $z$ has degree at most $k$ in $G[\mathcal I^v_H]$ since it has degree at most $k$ in~$G[\mathcal I^w_{H_1}]$. Also, if $z\in V(H_2)$ with $z\neq w$, then all the neighbors of $z$ are in $V(H_2)$, hence $z$ has degree at most $k-2$ in $G[\mathcal I^v_H]$ since $V(H_2)$ contains at most $k-1$ vertices. If $z=w$, then $z$ has at most $h_1-k-3$ neighbors in $\mathcal I^w_{H_1}$ and at most $h_2-1$ neighbors in $V(H_2)$ (that is, all the vertices of $V(H_2)$ except for $w$ itself). Thus, its degree is at most $h_1-k-3+h_2-1=h-k-3\leq (\frac{3k}{2}+3) -k-3=\frac{k}{2}<k$. Finally, we prove that $v$ has degree at most $h-k-3$ in $G[\mathcal I^v_H]$. All the neighbors of $v$ are in $V(H_2)$, given that $u$ and $w$ are the only vertices of $H_1$ that are neighbors of~$v$, given that $w$ also belongs to $H_2$, and given that $u$ does not belong to~$\mathcal I^v_H$. It follows that $v$ has degree at most $h_2-1= h-h_1\leq h-k-4$ in $G[\mathcal I^v_H]$, as required.

We now prove that $H$ has an $(h-1)$-over-$h$ set with $u$ and $v$. Let $\mathcal I^{uw}_{H_1}$ be an $(h_1-1)$-over-$h_1$ set with $u$ and $w$ for $H_1$. We show that $\mathcal I^{uv}_H:=\mathcal I^{uw}_{H_1}\cup V(H_2)$ is an $(h-1)$-over-$h$ set with $u$ and $v$. First, $|\mathcal I^{uv}_H|=h-1$, given that $\mathcal I^{uw}_{H_1}$ contains all the vertices of $H_1$, except for one. Second, $u\in \mathcal I^{uv}_H$ and $v\in \mathcal I^{uv}_H$, by construction. Third, we prove that any vertex $z\neq u,v$ of $\mathcal I^{uv}_H$ has degree at most $k$ in $G[\mathcal I^{uv}_H]$. The arguments for the case in which $z\in \mathcal I^{uw}_{H_1}$ (in which $z\in V(H_2)$) with $z\neq w$ are the same as for the proof that $\mathcal I^u_H$ is an $(h-2)$-over-$h$ set with $u$ (that $\mathcal I^v_H$ is an $(h-2)$-over-$h$ set with $v$, respectively). If $z=w$, then $z$ has at most $h_1-k-2$ neighbors in $\mathcal I^{uw}_{H_1}$ and at most $h_2-1$ neighbors in $V(H_2)$ (that is, all the vertices of $V(H_2)$ except for $w$ itself). Thus, its degree is at most $h_1-k-2+h_2-1=h-k-2\leq (\frac{3k}{2}+3) -k-2=\frac{k}{2}+1<k$, where the last inequality holds true since $k\geq 4$. Finally, we prove that $u$ and $v$ have degree at most $h-k-2$ in $G[\mathcal I^{uv}_H]$. We have that all the neighbors of $u$ are in $\mathcal I^{uw}_{H_1}$, except for $v$. It follows that $u$ has degree at most $(h_1-k-2)+1\leq h-k-2$ in $G[\mathcal I^{uv}_H]$, as required. Also, all the neighbors of $v$ are in $V(H_2)$, except for $u$. It follows that $v$ has degree at most $(h_2-1)+1= (h-h_1)+1\leq h-k-3$ in $G[\mathcal I^{uv}_H]$, as required.

{\bf Statement (S3).} We distinguish four cases. 

\begin{itemize}
\item In the {\em first case}, we have $h_1\leq k+2$ and $h_2\leq k+2$. We prove that the set $\mathcal I_H:=V(H)\setminus\{u,w,v\}$ is a good set for $H$. First, $|\mathcal I_H|=h-3$, which is the desired number of vertices, given that $h\geq k+4$, by the assumptions of statement (S3). Second, $u,v\notin \mathcal I_H$, by construction. Third, we prove that any vertex $z\in \mathcal I_H$ has degree at most~$k$ in~$G[\mathcal I_H]$. If $z\in V(H_1)$ (the case in which $z\in V(H_2)$ is symmetric), then the neighbors of $z$ are all in $V(H_1)$. Since $h_1\leq k+2$ and since $u$, $w$, and $z$ are either not in $\mathcal I_H$ or are not neighbors of $z$, it follows that the degree of $z$ in $G[\mathcal I_H]$ is at most $k-1$, as required.
\item In the {\em second case}, we have that $h\leq 2k+4$, $\max\{h_1,h_2\}>k+2$, and $\min\{h_1,h_2\}\leq k+2$. Assume that $h_1>k+2$ and that $h_2\leq k+2$, as the other case is analogous. By the assumptions of statement (S3), we have $k+4\leq h_1 \leq \frac{3k}{2}+3$. Let $\mathcal I^{w}_{H_1}$ be an $(h_1-2)$-over-$h_1$ set with $w$ for $H_1$. We show that $\mathcal I_H:=\mathcal I^{w}_{H_1}\cup (V(H_2)\setminus \{v\})$ is a good set for $H$. First, $|\mathcal I_H|=h-3$, which is equal to the desired number of vertices, given that $h\geq \frac{3k}{2}+4$, by the assumptions of statement (S3). Second, $u\notin \mathcal I_H$, since $\mathcal I^{w}_{H_1}$ is an $(h_1-2)$-over-$h_1$ set with $w$, and $v\notin \mathcal I_H$, by construction. Third, we prove that any vertex $z\in \mathcal I_H$ has degree at most~$k$ in~$G[\mathcal I_H]$. If $z\in \mathcal I^w_{H_1}$ with $z\neq w$, then all the neighbors of~$z$ are in~$V(H_1)$, hence $z$ has degree at most $k$ in $G[\mathcal I_H]$ since it has degree at most $k$ in~$G[\mathcal I^w_{H_1}]$. Also, if $z\in V(H_2)$ with $z\neq w$, then all the neighbors of $z$ are in $V(H_2)$, hence $z$ has degree at most $k$ in $G[\mathcal I_H]$ since $V(H_2)$ contains at most $k+2$ vertices, $v$ is not in $\mathcal I_H$ and $z$ is not a neighbor of itself. Finally, $w$ has at most $h_1-k-3$ neighbors in $\mathcal I^{w}_{H_1}$ and at most $h_2-2$ neighbors in $V(H_2)\setminus \{v\}$, hence its degree in~$G[\mathcal I_H]$ is at most $h_1-k-3+h_2-2=h-k-4\leq k$, where the last inequality uses $h\leq 2k+4$.
\item In the {\em third case}, we have that $h\geq 2k+5$, $\max\{h_1,h_2\}>k+2$, and $\min\{h_1,h_2\}\leq k+2$. Assume that $h_1>k+2$ and that $h_2\leq k+2$, as the other case is analogous. By the assumptions of statement (S3), we have $k+4\leq h_1 \leq \frac{3k}{2}+3$. Let $\mathcal I^{w}_{H_1}$ be an $(h_1-2)$-over-$h_1$ set with $w$ for $H_1$. We show that $\mathcal I_H:=(\mathcal I^{w}_{H_1}\setminus \{w\}) \cup (V(H_2)\setminus \{w,v\})$ is a good set for $H$. First, $|\mathcal I_H|=h-4$, which is equal to the desired number of vertices, given that $h\geq 2k+5$, by the assumptions of statement (S3). Second, $u\notin \mathcal I_H$, since $\mathcal I^{w}_{H_1}$ is an $(h_1-2)$-over-$h_1$ set with $w$, and $v\notin \mathcal I_H$, by construction. Third, we prove that any vertex $z\in \mathcal I_H$ has degree at most~$k$ in~$G[\mathcal I_H]$. If $z\in \mathcal I^w_{H_1}$, then all the neighbors of~$z$ are in~$V(H_1)$, hence $z$ has degree at most $k$ in $G[\mathcal I_H]$ since it has degree at most $k$ in~$G[\mathcal I^w_{H_1}]$. Also, if $z\in V(H_2)$, then all the neighbors of $z$ are in $V(H_2)$, hence $z$ has degree at most $k-1$ in $G[\mathcal I_H]$ since $V(H_2)$ contains at most $k+2$ vertices, $w$ and $v$ are not in $\mathcal I_H$, and $z$ is not a neighbor of itself. 
\item In the {\em fourth case}, we have $\min\{h_1,h_2\}>k+2$. By the assumptions statement (S3), we have $k+4\leq h_1 \leq \frac{3k}{2}+3$ and $k+4\leq h_2 \leq \frac{3k}{2}+3$. Let $\mathcal I^{w}_{H_1}$ be an $(h_1-2)$-over-$h_1$ set with $w$ for $H_1$ and let $\mathcal I^{w}_{H_2}$ be an $(h_2-2)$-over-$h_2$ set with $w$ for $H_2$. We show that $\mathcal I_H:=\mathcal I^{w}_{H_1}\cup \mathcal I^{w}_{H_2}$ is a good set for $H$. First, $|\mathcal I_H|=h-4$, given that $\mathcal I^w_{H_i}$ contains all the vertices of $H_i$, except for two, for $i=1,2$. Second, $u\notin \mathcal I_H$, since $\mathcal I^{w}_{H_1}$ is an $(h_1-2)$-over-$h_1$ set with $w$, and $v\notin \mathcal I_H$, since $\mathcal I^{w}_{H_2}$ is an $(h_2-2)$-over-$h_2$ set with $w$. Third, we prove that any vertex $z\in \mathcal I_H$ has degree at most~$k$ in~$G[\mathcal I_H]$. If $z\in \mathcal I^w_{H_i}$ with $z\neq w$, for some $i\in \{1,2\}$, then all the neighbors of~$z$ are in~$V(H_i)$, hence $z$ has degree at most $k$ in $G[\mathcal I_H]$ since it has degree at most $k$ in~$G[\mathcal I^w_{H_i}]$. Also, $w$ has degree at most $h_1-k-3$ in $G[\mathcal I^{w}_{H_1}]$ and at most $h_2-k-3$  in $G[\mathcal I^{w}_{H_2}]$, hence at most $h_1+h_2-2k-6\leq h-2k-5\leq k$, where we used $h\leq 3k+5$.
\end{itemize}
\end{itemize}
This concludes the proof of the theorem for the case in which $k$ is even. 

If $k$ is odd, then the proof proceeds in the same way with the following differences (as a main numerical difference, the $\frac{3k}{2}$ terms for the even case become $\frac{3k-1}{2}$). First, in the base case we now have $n\leq \frac{3k-1}{2}+3$; this case is handled with the same arguments as when $k$ is even. Second, the definition of a good set $\mathcal I_H$ for $H$ now requires $|\mathcal I_H|\geq h-2$ if $k+3\leq h\leq \frac{3k-1}{2}+3$, requires $|\mathcal I_H|\geq h-3$ if $\frac{3k-1}{2}+4\leq h\leq 2k+4$ and requires $|\mathcal I_H|\geq h-4$ if $2k+5\leq h\leq 3k+4$.  Consequently, in statement (S2), the value of $h$ satisfies $k+4\leq h\leq \frac{3k-1}{2}+3$, while in statement (S3), the value of $h$ satisfies $\frac{3k-1}{2}+4\leq h\leq 3k+4$. Statements (S1)--(S3) then imply the statement of the theorem, by setting $\ell=\frac{3k-1}{2}+2$. In particular, note that $\ell+2=\frac{3k-1}{2}+4$ and $2\ell+1=3k+4$; these are the smallest and largest values of $h$ in statement (S3). The reason why we need to settle for a smaller ratio, namely $\frac{2k+2/3}{2k+14/3}$ rather than $\frac{2k+1}{2k+5}$, when $k$ is odd is that missing  three vertices in a graph with at least $\frac{3k-1}{2}+4$ vertices is not good enough for the ratio $\frac{2k+1}{2k+5}$, while it is for $\frac{2k+2/3}{2k+14/3}$. Indeed, $\frac{(3k-1)/2+1}{(3k-1)/2+4}=\frac{2k+2/3}{2k+14/3}<\frac{2k+1}{2k+5}$. The proof of the statements (S1)--(S3) proceeds almost verbatim with respect to the case in which $k$ is even. In the proof of statement (S2), we used $h\leq \frac{3k}{2}+3$ in order to prove the inequality $h-k-1\leq k$ (and the weaker inequalities $h-k-3\leq k$ and $h-k-2\leq k$). With $h\leq \frac{3k-1}{2}+3$ rather than $h\leq \frac{3k}{2}+3$ such inequality is obviously still satisfied, since the left term is smaller than before. Also, in the proof of statement (S3), we used $h\leq 3k+5$ in order to prove that the degree of $w$, which is upper bounded by $h-2k-5$, is smaller than or equal to $k$. Again, since now $h\leq 3k+5$, the degree of $w$ is still smaller than or equal to $k$.



This concludes the proof of the theorem.
\end{proof}


\section{Induced Subgraphs of Planar Graphs} \label{se:planar}

In this section, we consider the size of the largest induced subgraph of degree at most $k$ that one is guaranteed to find in an $n$-vertex planar graph. \cref{ta:planar} shows the coefficients of the linear terms in the size of the induced subgraphs we can find, for various values of $k$. 
\begin{table}[htb]
\centering
\begin{tabular}{|r||c|c|c|c|c|}
\hline
   & $k=3$ & $k=4$ & $k=5$ & $k=6$ & $k=7$ \\ \hline
LB & $>0.384$     & $>0.457$     & $0.5$     & $>0.571$  & $0.625$     \\ \hline
UB & $<0.572$     & $0.\overline{6}$     & $0.\overline{72}$     & $<0.769$ & $0.8$     \\ \hline
\end{tabular}
\caption{Results on planar graphs with different values of $k$.}
\label{ta:planar}
\vspace{-7mm}
\end{table}

We start by proving two upper bounds, the first one, $\frac{2k-2}{2k+1}n +O(1)$, is tighter for $k<10$, while the second one, $\frac{k+2}{k+4}n +O(1)$, is tighter for $k>10$.

\begin{theorem} \label{th:planar-ub}
For every $n\geq 1$ and for every integer $k\geq 3$, let $\mu=\min \{\frac{2k-2}{2k+1}n +5,\frac{k+2}{k+4}n +2\}$. There exists an $n$-vertex planar graph $G$ with the following property. Consider any set $\mathcal I \subseteq V(G)$ such that $G[\mathcal I]$ has degree at most $k$.  Then $|\mathcal I|\leq \mu$.
\end{theorem}

\begin{proof}
	We distinguish two cases, depending on which of $\frac{2k-2}{2k+1}n +5$ and $\frac{k+2}{k+4}n +2$ is smaller. 

\begin{figure}[htb]
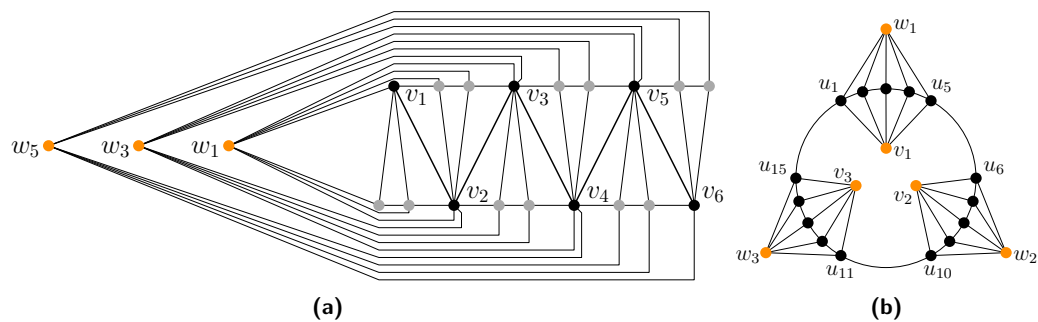

    \centering
	\begin{subfigure}[b]{0.6\textwidth}
		\centering
	\includegraphics[scale=0.7, page=3]{img/ub_outerplanar_planar.pdf}
		\subcaption{}
	\end{subfigure}
	\hfill
	\begin{subfigure}[b]{0.35\textwidth}
		\centering
	\includegraphics[scale=0.7, page=4]{img/ub_outerplanar_planar.pdf}
		\subcaption{}
	\end{subfigure}
	\caption{Illustration for \cref{th:planar-ub}. (a) The graph for the case $\mu=\frac{2k-2}{2k+1}n +5$, with $n=21$, $k=3$, and $h=3$. (b) The graph for the case $\mu=\frac{k+2}{k+4}n +2$, $n=21$, $k=3$, and $h=3$ (values of $k$ for which $\frac{k+2}{k+4}n +2<\frac{2k-2}{2k+1}n +5$ are too large to result in a readable figure).}
    \label{fig:planar-ub-case}
\end{figure}

	Suppose first that $\mu=\frac{2k-2}{2k+1}n +5$. Let $G$ be the $n$-vertex planar graph depicted in~\cref{fig:planar-ub-case}(a) and defined as follows (note that $G$ contains the graph from the proof of \cref{th:outerplanar-ub} as a subgraph). Let $h=\lfloor \frac{n}{2k+1}\rfloor$. Then $G$ contains a path $v_1,\dots,v_{2h}$, vertices $u^i_{1},\dots,u^i_{k-1}$, for $i=1,\dots,2h$, and edges $u^i_{j}u^i_{j+1}$, $v_iu^i_j$, $v_{i-1}u^i_{1}$, and $u^i_{k-1}v_{i+1}$, whenever the end-vertices of such edges are in $G$. The graph also contains vertices $w_1,w_3,\dots,w_{2h-1}$, where $w_{2i-1}$ is connected to $v_{2i-2}$ (if $i>1$), to $v_{2i-1}$, to $v_{2i}$, to $v_{2i+1}$ (if $i<h$), and to $u^{2i-1}_{1},\dots,u^{2i-1}_{k-1}, u^{2i}_{1},\dots,u^{2i}_{k-1}$. Additional $n-h\cdot (2k+1)$ isolated vertices, which do not play any role in the proof of the statement, are added to $G$, so that it has $n$ vertices. 
	
	Consider any set $\mathcal I \subseteq V(G)$ such that $G[\mathcal I]$ has degree at most $k$. We prove that there exists a set $\mathcal I' \subseteq V(G)$ such that $G[\mathcal I']$ has degree at most $k$, such that $|\mathcal I'|=|\mathcal I|$, and such that $\mathcal I'$ does not contain any of $w_1,w_3,\dots,w_{2h-1}$. Suppose that $\mathcal I$ contains a vertex $w_{2i-1}$, for some $i\in \{1,\dots,h\}$. Then there exists a vertex $z$ among $u^{2i-1}_{1},\dots,u^{2i-1}_{k-1},u^{2i}_{1},\dots,u^{2i}_{k-1}$, that is not in $\mathcal I$, as otherwise $w_{2i-1}$ would have degree $2k-2$, which is larger than $k$ since $k\geq 3$. Replace $w_{2i-1}$ in $\mathcal I$ with $z$. Clearly, the size of $\mathcal I$ remains the same. Also, $G[\mathcal I]$ still has degree at most $k$. Indeed, suppose, for a contradiction, that a vertex $z'$ has degree larger than $k$ in $G[\mathcal I]$. Then $z'$ is either $z$ or a neighbor of $z$, given that $z'$ either is not in $\mathcal I$ or is in $\mathcal I$ and has degree at most $k$ before the replacement. If $z'=z$, it suffices to observe that $z'$ has degree at most $4$ in $G$, and one of its neighbors, namely $w_{2i-1}$, is not in $\mathcal I$ after the replacement. If $z'$ is a neighbor of $z$, then $z'$ is a vertex among $v_{2i-2}$ (if $i>1$), $v_{2i-1}$, $v_{2i}$, $v_{2i+1}$ (if $i<h$), and $u^{2i-1}_{1},\dots,u^{2i-1}_{k-1}, u^{2i}_{1},\dots,u^{2i}_{k-1}$. However, all such vertices are neighbors of $w_{2i-1}$, hence the degree of $z'$ in $G[\mathcal I]$ does not increase after the replacement. The repetition of this replacement results in the desired set $\mathcal I'$.

	From the set $\mathcal I'$, a set $\mathcal I'' \subseteq V(G)$ such that $G[\mathcal I'']$ has degree at most $k$, such that $|\mathcal I''|=|\mathcal I'|$, and such that $\mathcal I''$ does not contain any of $w_1,w_3,\dots,w_{2h-1},v_2,\dots,v_{2h-1}$ can be constructed as in the proof of~\cref{th:outerplanar-ub}. Indeed, suppose that $\mathcal I'$ contains a vertex $v_i$, for some $i\in \{2,\dots,2h-1\}$. Then there exists a vertex $z$ among $u^{i-1}_{k-1},u^i_{1},u^i_{2},\dots,u^i_{k-1},u^{i+1}_{1}$ that does not belong to $\mathcal I'$, as otherwise $v_i$ would have degree $k+1$ in $G[\mathcal I']$. Replace $v_i$ in $\mathcal I'$ with $z$. Clearly, the size of $\mathcal I'$ remains the same. Also, $G[\mathcal I']$ still has degree at most $k$. Indeed, no vertex among $w_1,w_3,\dots,w_{2h-1}$ belongs to $\mathcal I'$. Also, every vertex $u^\ell_j$ has degree at most $4$ in $G$ and has one neighbor among $w_1,w_3,\dots,w_{2h-1}$, which is not in $\mathcal I'$, hence the degree of $u^\ell_j$ in $G[\mathcal I']$ is at most $3$. Finally, $z$ is a neighbor of $v_i$, which however is not in $\mathcal I'$ after the replacement, and might be a neighbor of $v_{i-1}$ or $v_{i+1}$. However, $v_{i-1}$ or $v_{i+1}$ are also neighbors of $v_i$, hence if they are in $\mathcal I'$, their degree in $G[\mathcal I']$ does not increase. The repetition of this replacement results in the desired set $\mathcal I''$. 
	
	Note that $|\mathcal I''|\leq n-(2h-2)-h=n-3 \lfloor \frac{n}{2k+1}\rfloor+2 \leq n-\frac{3n}{2k+1}+5=\frac{2k-2}{2k+1}n+5$. 
	
	Suppose next that $\mu=\frac{k+2}{k+4}n +2$. We define a planar graph $G$ as follows, see \cref{fig:planar-ub-case}(b). Let $h=\lfloor \frac{n}{k+4}\rfloor$. Initialize $G$ to a cycle with $n-2h$ vertices, labeled $u_1,\dots,u_{n-2h}$. For $i=1,\dots,h$, insert into $G$ two vertices $v_i$ and $w_i$, which are adjacent to the $k+2$ vertices $u_j$ such $(i-1)\cdot (k+2)+1\leq j \leq i \cdot (k+2)$. 
	
	Consider any set $\mathcal I \subseteq V(G)$ such that $G[\mathcal I]$ has degree at most $k$. For $i=1,\dots,h$, we have that, among the $k+4$ vertices $v_i,w_i,u_{(i-1)\cdot (k+2)+1},\dots,u_{i \cdot (k+2)}$, at least two do not belong to $\mathcal I$. Indeed, if at least $k+1$ among $u_{(i-1)\cdot (k+2)+1},\dots,u_{i \cdot (k+2)}$ belong to $\mathcal I$, then neither $v_i$ nor $w_i$ belongs to $\mathcal I$, as otherwise $G[\mathcal I]$ would contain a vertex of degree at least $k+1$. It follows that $|\mathcal I|\leq n-2h =n-2\lfloor \frac{n}{k+4}\rfloor\leq n-2\frac{n}{k+4}+2=\frac{k+2}{k+4}n+2$.
\end{proof}


\newcommand{\pgr}{\textsc{GreedyRemoval}\xspace}

We now focus our attention on the lower bound. We present an algorithm, called \pgr, that given a (not necessarily planar) graph $G$ and a target integer value $k\geq 3$, defines a large set of vertices $\mathcal I\subseteq V(G)$ such that $G[\mathcal I]$ has degree at most $k$. As the name suggests, the algorithm follows a natural greedy strategy: It repeatedly removes from the current graph $\mathcal G$ (initially, we have $\mathcal G=G$) any vertex with maximum degree, until $\mathcal G$ has the target degree $k$. When this happens, the set of vertices of $\mathcal G$ is the desired set~$\mathcal I$. 

In order to get better bounds, we tweak this strategy a bit if $k=3$ or $k=4$. In these cases, once~$\mathcal G$  has degree $k+1$, we either repeatedly remove vertices of degree $k+1$, or we act as follows. We 
augment $\mathcal G$ to a $(k+1)$-regular graph, in which we find a small dominating set $\mathcal D$. We remove from $\mathcal G$ the vertices in $\mathcal D$. Now $\mathcal G$ has degree $k$ and its vertex set is the desired set~$\mathcal I$. We choose the strategy that removes the smallest number of vertices. This algorithm is the basis for the following theorem.

\begin{theorem} \label{th:planar-lb}
	For every $n\geq 1$, for every graph $G$ with $n$ vertices, $m$ edges, and degree $d \geq 4$, and for every integer $3 \leq k \leq d-1$, there exists a set $\mathcal I \subseteq V(G)$ such that $G[\mathcal I]$ has degree at most $k$ and such that:
	\begin{enumerate}
		\item\label{th:planar-lb-d4-k3} 
		if $d=4$ and $k=3$, then $|\mathcal I|\geq \frac{7}{11}n-\frac{24}{11}$;
		\item\label{th:planar-lb-d5-k3}
		if $d=5$ and $k=3$, then $|\mathcal I|\geq \frac{35}{78}n-\frac{8}{13}$;
		\item\label{th:planar-lb-d6-k3}
		if $d\geq 6$ and $k=3$, then $|\mathcal I|\geq  \frac{10}{13}n - \frac{5}{39}m -\frac{8}{13}$;
		\item\label{th:planar-lb-d5-k4} 
		if $d=5$ and $k=4$, then $|\mathcal I|\geq \frac{9}{14}n-\frac{1}{2}$;
		\item\label{th:planar-lb-d6-k4} 
		if $d\geq 6$ and $k=4$, then $|\mathcal I|\geq \frac{54}{59}n - \frac{9}{59}m - \frac{35}{59}$; and
		\item\label{th:planar-lb-d6-k5} 
		if $d\geq 6$ and $k\geq 5$, then $|\mathcal I|\geq n-\frac{m}{k+1}$.		
	\end{enumerate}
	
\end{theorem}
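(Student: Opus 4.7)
The plan is to construct $\mathcal I$ algorithmically via the announced procedure \pgr that repeatedly removes vertices whose degrees exceed the target. Initialize $\mathcal G := G$ and run two phases. Phase~1 removes, one at a time, a vertex of maximum degree in $\mathcal G$ whenever this degree is at least $k+2$. Each such removal eliminates at least $k+2$ incident edges; writing $r_1$ for the number of Phase~1 removals and $m_1$ for the edge count of $\mathcal G$ afterwards, we have $m_1\leq |E(G)|-(k+2)r_1$, where Euler's inequality gives $|E(G)|\leq 3n-6$, sharpened to $|E(G)|\leq dn/2$ via handshaking when $d\in\{4,5\}$.

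Phase~2 is activated only when $k\in\{3,4\}$; at its start $\mathcal G$ has maximum degree $k+1$. We run two competing strategies and keep the one deleting fewer vertices. In Strategy~A we iteratively delete a vertex of degree $k+1$, removing at least $k+1$ edges each time, which bounds the number of Phase~2 removals by $\lceil m_1/(k+1)\rceil$. In Strategy~B we augment $\mathcal G$ to a $(k+1)$-regular planar graph $\mathcal G^{*}$ by adding auxiliary edges (and, where necessary, auxiliary vertices inside the faces of a planar embedding), find a small dominating set $\mathcal D$ of $\mathcal G^{*}$, and delete $\mathcal D\cap V(\mathcal G)$ from $\mathcal G$; because every surviving vertex of degree $k+1$ is dominated by $\mathcal D$, the resulting graph has max degree at most $k$. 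The deletion size is bounded by the domination number of $\mathcal G^{*}$, which we control via discharging on $(k+1)$-regular planar graphs or, for $k=3$, via the Four Color Theorem (taking the smallest color class).

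The final estimate $|\mathcal I|=n-r_1-r_2$ is then worked out case by case. For case~\ref{th:planar-lb-d6-k5}, Phase~1 alone suffices: $r(k+1)\leq |E(G)|\leq 3n-6$ directly yields $|\mathcal I|\geq \frac{(k-2)n+6}{k+1}$. Cases~\ref{th:planar-lb-d4-k3}--\ref{th:planar-lb-d5-k4} (with $d\in\{4,5\}$) use the sharper handshake bound to shrink the Phase~1 term, so the Phase~2 dominating-set cost becomes the dominant contribution. Cases~\ref{th:planar-lb-d6-k4} and \ref{th:planar-lb-d6-k3} genuinely balance the two phases: $r_1$ is bounded by $\frac{(3n-6)-m_1}{k+2}$, while $r_2$ is bounded by the minimum of the Strategy~A and Strategy~B estimates as functions of $m_1$ and $n-r_1$; minimizing $r_1+r_2$ over $m_1$ produces the asymmetric constants.

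The main obstacle will be Strategy~B. One must both construct a planar $(k+1)$-regular augmentation $\mathcal G^{*}$ of $\mathcal G$ without inflating the vertex count appreciably and prove a domination bound sharp enough to produce the specific fractions $\frac{7}{11}$, $\frac{35}{78}$, $\frac{5}{13}$, $\frac{9}{14}$, and $\frac{27}{59}$. A secondary bookkeeping difficulty lies in picking the correct strategy in each regime: Strategy~A wins when $m_1$ is already small, Strategy~B wins when $\mathcal G$ still contains many degree-$(k+1)$ vertices, and the crossover point depends on both $k$ and $d$. Tracking the resulting additive constants (the $\frac{-24}{11}$, $\frac{+47}{59}$, etc.) amounts to carrying Euler's $-6$ and the lower-order terms of the domination bound through the optimization.
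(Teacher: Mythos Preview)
Your overall two-phase framework matches the paper's approach, and Case~\ref{th:planar-lb-d6-k5} is correct as you state it. However, Strategy~B as you describe it has two genuine gaps that block the remaining cases.

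First, the domination bound. The constants $\tfrac{4}{11}$ and $\tfrac{5}{14}$ (yielding $\tfrac{7}{11}$ and $\tfrac{9}{14}$) are not obtainable from the Four Color Theorem or from an unspecified discharging argument. Taking the smallest color class in a proper $4$-coloring gives an \emph{independent} set of size at most $n/4$, not a \emph{dominating} set; deleting it leaves vertices that may still have all $k{+}1$ neighbors present, so the degree does not drop. The paper instead invokes known sharp bounds on the domination number of regular graphs: every $4$-regular graph on $h$ vertices has a dominating set of size at most $\tfrac{4}{11}h$ (Liu--Sun), and every $5$-regular graph has one of size at most $\tfrac{5}{14}h$ (Xing--Sun--Chen). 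These results require only regularity, not planarity, and they are exactly what produces the fractions $\tfrac{7}{11}$, $\tfrac{35}{78}$, $\tfrac{5}{13}$, $\tfrac{9}{14}$, $\tfrac{27}{59}$ after balancing against the edge-counting bounds.

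Second, and relatedly, you should \emph{not} insist on a planar augmentation $\mathcal G^{*}$. The paper explicitly augments to a $(k{+}1)$-regular but not-necessarily-planar graph by greedily adding edges between non-adjacent low-degree vertices until the low-degree vertices form a clique, and then finishing with at most~$6$ (for $k=3$) or~$7$ (for $k=4$) auxiliary vertices. This bounded auxiliary count is what generates the additive constants such as $-\tfrac{24}{11}$ and $-\tfrac12$; it is not Euler's $-6$ alone. Trying to keep the augmentation planar would force you to add vertices inside faces in a way you have not controlled, and is unnecessary because the domination bounds above do not use planarity.
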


\begin{proof}
Before delving into the analysis of \pgr, we introduce some notation. A simple, yet crucial, observation is that the degree of the current graph $\mathcal G$ does not increase throughout the algorithm execution. For any $i=d,d-1,\dots,k$, we let $\mathcal G_i$ be the graph $\mathcal G$ the first time it has degree at most~$i$; thus, $\mathcal G_d=G$ and $\mathcal G_k$ is the graph whose vertex set is $\I$. It is possible that $\mathcal G_i=\mathcal G_{i-1}$; this happens if removing vertices of degree~$j$ from $\mathcal G_j$, for some $j\geq i+1$, the degree of $\mathcal G$ drops to $i-1$, or lower. We denote by $r_i$ the number of vertices of degree $i$ that are removed by \pgr; these are the vertices that are in $\mathcal G_i$ and not in $\mathcal G_{i-1}$. Also, we denote by $r_6^+$ the sum $\sum_{j=6}^d r_j$, that is, the number of vertices of degree at least $6$ that are removed by \pgr. 


We are now ready to analyze \pgr. In order to do so, we distinguish the six cases in the theorem's statement.

	
\textbf{Case 1:} $d=4$ and $k=3$. We show that $G=\mathcal G_4$ can be augmented to a $4$-regular graph by adding at most $6$ vertices, and several edges, to it. While there exist two vertices that are not adjacent and that both have degree smaller than $4$, we add an edge between them. When no such an edge can be added, the vertices of degree smaller than $4$ induce a clique $K_x$ in $\mathcal G_4$, with $x\leq 4$. Then it suffices to add at most $6$ vertices (and some further edges) to $\mathcal G_4$ in order to obtain a $4$-regular graph $\mathcal H_4$, see \cref{fig:planar-lb-4-3}. 
\begin{figure}[htb]
	\centering
	\begin{subfigure}[b]{0.1\textwidth}
		\centering
		\includegraphics[page=1]{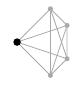}
		\subcaption{}
	\end{subfigure}
	\hfill
	\begin{subfigure}[b]{0.1\textwidth}
		\centering
		\includegraphics[page=2]{img/lb_planar.pdf}
		\subcaption{}
	\end{subfigure}
	\hfill
	\begin{subfigure}[b]{0.1\textwidth}
		\centering
		\includegraphics[page=3]{img/lb_planar.pdf}
		\subcaption{}
	\end{subfigure}
	\hfill
	\begin{subfigure}[b]{0.1\textwidth}
		\centering
		\includegraphics[page=4]{img/lb_planar.pdf}
		\subcaption{}
	\end{subfigure}
	\hfill
	\begin{subfigure}[b]{0.1\textwidth}
		\centering
		\includegraphics[page=5]{img/lb_planar.pdf}
		\subcaption{}
	\end{subfigure}
	\hfill
	\begin{subfigure}[b]{0.1\textwidth}
		\centering
		\includegraphics[page=6]{img/lb_planar.pdf}
		\subcaption{}
	\end{subfigure}
	\hfill
	\begin{subfigure}[b]{0.1\textwidth}
		\centering
		\includegraphics[page=7]{img/lb_planar.pdf}
		\subcaption{}
	\end{subfigure}
	\hfill
	\begin{subfigure}[b]{0.1\textwidth}
		\centering
		\includegraphics[page=8]{img/lb_planar.pdf}
		\subcaption{}
	\end{subfigure}
	\hfill
	\begin{subfigure}[b]{0.1\textwidth}
		\centering
		\includegraphics[page=9]{img/lb_planar.pdf}
		\subcaption{}
	\end{subfigure}
	\caption{If the vertices of degree smaller than $4$ induce a clique $K_x$ in $\mathcal G_4$, it suffices to add at most $6$ vertices (and some edges) to $\mathcal G_4$ in order to obtain a $4$-regular graph $\mathcal H_4$. Vertices and edges in $\mathcal G_4$ are black, while vertices and edges in $\mathcal H_4$ and not in $\mathcal G_4$ are gray. Each vertex of $K_x$ has degree at least $x-1$ and at most $3$ in $\mathcal G_4$. Also, the sum of the degrees of the vertices in $K_x$ is even. This results in the following cases. (a) $x=1$, one vertex of degree $0$. (b) $x=1$, one vertex of degree $2$. (c) $x=2$, two vertices of degree $1$. (d) $x=2$, two vertices of degree $2$. (e) $x=2$, one vertex of degree $1$ and one vertex of degree $3$. (f) $x=2$, two vertices of degree $3$. (g) $x=3$, three vertices of degree $2$. (h) $x=3$, two vertices of degree $3$ and one vertex of degree $2$. (i) $x=4$, four vertices of degree $3$.}
	\label{fig:planar-lb-4-3}
\end{figure}
Every $h$-vertex $4$-regular graph has a dominating set of size at most $\frac{4}{11}h$~\cite{liu2004domination}, hence $\mathcal H_4$ has a dominating set $\mathcal D$ of size at most $\frac{4}{11}(n+6)$. Let $r_4$ be the number of vertices of $\mathcal G_4$ that are in $\mathcal D$. Removing such vertices from $\mathcal G_4$ results in a graph $\mathcal G_3$ with degree at most $3$. We thus get the following:
\begin{equation}\label{eq:case1}
	r_4 \leq \frac{4}{11}(n + 6).
\end{equation}
The vertex set of $\mathcal G_3$ is our desired set $\I$, where $|\I| = n-r_4\geq \frac{7}{11}n-\frac{24}{11}$.

\textbf{Case 2:} $d=5$ and $k=3$. The algorithm first removes $r_5$ vertices from $G$ obtaining $\mathcal G_4$, and then removes $r_4$ vertices from $\mathcal G_4$ resulting in $\mathcal G_3$, whose vertex set is $\I$. We upper bound $r_4 + r_5$ in two different ways. First, by \cref{eq:case1} and since the number of vertices of $\mathcal G_4$ is $n-r_5$, we get: 

\begin{equation}\label{eq:case2-1}
r_4 + r_5 \leq \frac{4}{11}(n - r_5 + 6) + r_5 = \frac{4n + 7r_5+24}{11}.
\end{equation}
Second, the number of vertices of degree $4$ that are removed from $\mathcal G_4$ is at most $\frac{1}{4}$ times the number of edges of $\mathcal G_4$, since the removal of a degree-$4$ vertex removes $4$ edges from $\mathcal G_4$. Also, the number of edges of $\mathcal G_4$ is equal to the number of edges of $\mathcal G_5$, which is at most $\frac{5n}{2}$ given that $\mathcal G_5$ has degree $5$, minus $5$ times the number of vertices of degree $5$ that are removed from $\mathcal G_5$. Hence, we get:
\begin{equation}\label{eq:case2-2}
	r_4 + r_5 \leq \frac{\frac{5n}{2} - 5r_5}{4} + r_5 = \frac{5n - 2r_5}{8}.
\end{equation}
We thus get:
\begin{equation}\label{eq:case2}
	r_4 + r_5 \leq \frac{43}{78}n+\frac{8}{13}.
\end{equation} 
\cref{eq:case2} comes from \cref{eq:case2-1} when $r_5 \leq \frac{23n}{78}-\frac{32}{13}$ and from \cref{eq:case2-2} when $r_5 \geq \frac{23n}{78}-\frac{32}{13}$. Therefore, it follows that $|\mathcal I|=n-r_4-r_5\geq \frac{35}{78}n-\frac{8}{13}$.

	
\textbf{Case 3:} $d\geq 6$ and $k=3$. The algorithm first removes $r_{6^+}$ vertices from $G$ to obtain~$\mathcal G_5$, then removes $r_5$ vertices from $\mathcal G_5$ to obtain $\mathcal G_4$, and finally removes $r_4$ vertices from $\mathcal G_4$, resulting in $\mathcal G_3$, whose vertex set is $\I$. We upper bound $r_4 + r_5 + r_{6^+}$ in three different ways. First, by \cref{eq:case2} and since the number of vertices of~$\mathcal G_5$ is $n-r_{6^+}$ we get: 
\begin{equation}\label{eq:case3-1}
	r_4 + r_5 + r_{6^+} \leq \frac{43}{78} (n - r_{6^+}) + \frac{8}{13} + r_{6^+} = \frac{43 n + 35 r_{6^+}+48}{78}.
\end{equation} 
Second, by \cref{eq:case1} and since the number of vertices of~$\mathcal G_4$ is $n-r_5-r_{6^+}$, we get: 
\begin{equation}\label{eq:case3-2}
	r_4 + r_5 + r_{6^+} \leq \frac{4}{11} (n - r_5 - r_{6^+}+6) + r_5 + r_{6^+} = \frac{4 n + 7 r_5 + 7 r_{6^+}+24}{11}.
\end{equation}
Third, as in Case 2, the number of vertices of degree $4$ that are removed from $\mathcal G_4$ is at most $\frac{1}{4}$ times the number of edges of $\mathcal G_4$. Also, the number of edges of $\mathcal G_4$ is at most the number $m$ of edges of $G$, minus $5$ times the number of vertices of degree $5$ that are removed from $\mathcal G_5$, minus $6$ times the number of vertices of degree at least $6$ that are removed from $G$. Hence, we get:
\begin{equation}\label{eq:case3-3}
	r_4 + r_5 + r_{6^+} \leq  \frac{m - 5r_5 - 6r_{6^+}}{4} + r_5 + r_{6^+} = \frac{m - r_5 - 2 r_{6^+}}{4}.
\end{equation}
We thus get:
\begin{equation}\label{eq:case3}
	r_4 + r_5 + r_{6^+} \leq \frac{3}{13}n + \frac{5}{39}m +\frac{8}{13}.
\end{equation} 
\cref{eq:case3} comes from \cref{eq:case3-1} when $r_{6^+} \leq -\frac{5}{7}n + \frac{2}{7}m$,\ from \cref{eq:case3-2} when $r_5+r_{6^+} \leq -\frac{19}{91}n+\frac{55}{273}m-\frac{32}{13}$, and it comes from \cref{eq:case3-3} when $r_{6^+} \geq -\frac{5}{7}n + \frac{2}{7}m$ and $r_5+r_{6^+} \geq -\frac{19}{91}n+\frac{55}{273}m-\frac{32}{13}$. The last implication is justified by observing that, for any fixed value of $r_5+r_{6^+}$, the quantity $\frac{m - r_5 - 2 r_{6^+}}{4}$ is maximized by choosing $r_{6^+}$ as small as possible, i.e., equal to $-\frac{5}{7}n + \frac{2}{7}m$. Then the value of $r_5$ which maximizes $\frac{m - r_5 - 2 r_{6^+}}{4}$ is $-\frac{19}{91}n+\frac{55}{273}m-\frac{32}{13} - (-\frac{5}{7}n + \frac{2}{7}m) =\frac{46}{91}n-\frac{23}{273}m - \frac{32}{13}$. It follows that $|\mathcal I|=n-r_4-r_5-r_{6^+}\geq \frac{10}{13}n - \frac{5}{39}m -\frac{8}{13}$. 	


	
\begin{figure}[htb]
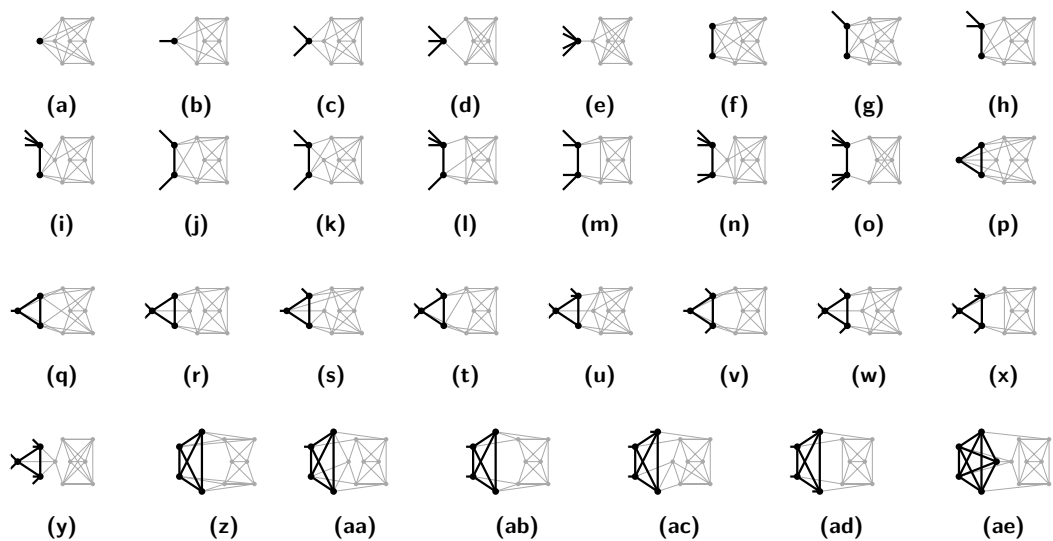

	\centering
	\begin{subfigure}[b]{0.115\textwidth}
		\centering
		\includegraphics[scale=.7, page=11]{img/lb_planar.pdf}
		\subcaption{}
	\end{subfigure}
	\hfill
	\begin{subfigure}[b]{0.115\textwidth}
		\centering
		\includegraphics[scale=.7, page=12]{img/lb_planar.pdf}
		\subcaption{}
	\end{subfigure}
	\hfill
	\begin{subfigure}[b]{0.115\textwidth}
		\centering
		\includegraphics[scale=.7, page=13]{img/lb_planar.pdf}
		\subcaption{}
	\end{subfigure}
	\hfill
	\begin{subfigure}[b]{0.115\textwidth}
		\centering
		\includegraphics[scale=.7, page=14]{img/lb_planar.pdf}
		\subcaption{}
	\end{subfigure}
	\hfill
	\begin{subfigure}[b]{0.115\textwidth}
		\centering
		\includegraphics[scale=.7, page=15]{img/lb_planar.pdf}
		\subcaption{}
	\end{subfigure}
	\hfill
	\begin{subfigure}[b]{0.115\textwidth}
		\centering
		\includegraphics[scale=.7, page=16]{img/lb_planar.pdf}
		\subcaption{}
	\end{subfigure}
	\hfill
	\begin{subfigure}[b]{0.115\textwidth}
		\centering
		\includegraphics[scale=.7, page=17]{img/lb_planar.pdf}
		\subcaption{}
	\end{subfigure}
    \hfill
	\begin{subfigure}[b]{0.115\textwidth}
		\centering
		\includegraphics[scale=.7, page=18]{img/lb_planar.pdf}
		\subcaption{}
	\end{subfigure}
\bigskip
	\begin{subfigure}[b]{0.115\textwidth}
		\centering
		\includegraphics[scale=.7, page=19]{img/lb_planar.pdf}
		\subcaption{}
	\end{subfigure}
	\hfill
	\begin{subfigure}[b]{0.115\textwidth}
		\centering
		\includegraphics[scale=.7, page=20]{img/lb_planar.pdf}
		\subcaption{}
	\end{subfigure}
	\hfill
	\begin{subfigure}[b]{0.115\textwidth}
		\centering
		\includegraphics[scale=.7, page=21]{img/lb_planar.pdf}
		\subcaption{}
	\end{subfigure}
	\hfill
	\begin{subfigure}[b]{0.115\textwidth}
		\centering
		\includegraphics[scale=.7, page=22]{img/lb_planar.pdf}
		\subcaption{}
	\end{subfigure}
	\hfill
	\begin{subfigure}[b]{0.115\textwidth}
		\centering
		\includegraphics[scale=.7, page=23]{img/lb_planar.pdf}
		\subcaption{}
	\end{subfigure}
	\hfill
	\begin{subfigure}[b]{0.115\textwidth}
		\centering
		\includegraphics[scale=.7, page=24]{img/lb_planar.pdf}
		\subcaption{}
	\end{subfigure}
    \hfill
    \begin{subfigure}[b]{0.115\textwidth}
		\centering
		\includegraphics[scale=.7, page=25]{img/lb_planar.pdf}
		\subcaption{}
	\end{subfigure}
	\hfill
	\begin{subfigure}[b]{0.115\textwidth}
		\centering
		\includegraphics[scale=.7, page=26]{img/lb_planar.pdf}
		\subcaption{}
	\end{subfigure}
	\bigskip
    \begin{subfigure}[b]{0.115\textwidth}
		\centering
		\includegraphics[scale=.7, page=27]{img/lb_planar.pdf}
		\subcaption{}
	\end{subfigure}
	\hfill
	\begin{subfigure}[b]{0.115\textwidth}
		\centering
		\includegraphics[scale=.7, page=32]{img/lb_planar.pdf}
		\subcaption{}
	\end{subfigure}
	\hfill
	\begin{subfigure}[b]{0.115\textwidth}
		\centering
		\includegraphics[scale=.7, page=28]{img/lb_planar.pdf}
		\subcaption{}
	\end{subfigure}
	\hfill
	\begin{subfigure}[b]{0.115\textwidth}
		\centering
		\includegraphics[scale=.7, page=33]{img/lb_planar.pdf}
		\subcaption{}
	\end{subfigure}
	\hfill
	\begin{subfigure}[b]{0.115\textwidth}
		\centering
		\includegraphics[scale=.7, page=35]{img/lb_planar.pdf}
		\subcaption{}
	\end{subfigure}
    \hfill
    \begin{subfigure}[b]{0.115\textwidth}
		\centering
		\includegraphics[scale=.7, page=29]{img/lb_planar.pdf}
		\subcaption{}
	\end{subfigure}
	\hfill
	\begin{subfigure}[b]{0.115\textwidth}
		\centering
		\includegraphics[scale=.7, page=34]{img/lb_planar.pdf}
		\subcaption{}
	\end{subfigure}
	\hfill
	\begin{subfigure}[b]{0.115\textwidth}
		\centering
		\includegraphics[scale=.7, page=36]{img/lb_planar.pdf}
		\subcaption{}
	\end{subfigure}
\bigskip
    \begin{subfigure}[b]{0.115\textwidth}
		\centering
		\includegraphics[scale=.7, page=37]{img/lb_planar.pdf}
		\subcaption{}
	\end{subfigure}
	\hfill
	\begin{subfigure}[b]{0.115\textwidth}
		\centering
		\includegraphics[scale=.7, page=38]{img/lb_planar.pdf}
		\subcaption{}
	\end{subfigure}
	\begin{subfigure}[b]{0.115\textwidth}
		\centering
		\includegraphics[scale=.7, page=39]{img/lb_planar.pdf}
		\subcaption{}
	\end{subfigure}
	\hfill
	\begin{subfigure}[b]{0.115\textwidth}
		\centering
		\includegraphics[scale=.7, page=40]{img/lb_planar.pdf}
		\subcaption{}
	\end{subfigure}
	\hfill
	\begin{subfigure}[b]{0.115\textwidth}
		\centering
		\includegraphics[scale=.7, page=41]{img/lb_planar.pdf}
		\subcaption{}
	\end{subfigure}
	\hfill
	\begin{subfigure}[b]{0.115\textwidth}
		\centering
		\includegraphics[scale=.7, page=42]{img/lb_planar.pdf}
		\subcaption{}
	\end{subfigure}
    \hfill
    \begin{subfigure}[b]{0.115\textwidth}
		\centering
		\includegraphics[scale=.7, page=43]{img/lb_planar.pdf}
		\subcaption{}
	\end{subfigure}
	\caption{If the vertices of degree smaller than $5$ induce a clique $K_x$ in $\mathcal G_5$, it suffices to add at most $7$ vertices (and some edges) to $\mathcal G_5$ in order to obtain a $5$-regular graph $\mathcal H_5$. Vertices and edges in $\mathcal G_5$ are black, while vertices and edges in $\mathcal H_5$ and not in $\mathcal G_5$ are gray. Each vertex of $K_x$ has degree at least $x-1$ and at most $4$ in $\mathcal G_5$. This results in the following cases. 
 (a) $x=1$, one deg-$0$ vertex. 
 (b) $x=1$, one deg-$1$ vertex.
 (c) $x=1$, one deg-$2$ vertex. 
 (d) $x=1$, one deg-$3$ vertex. 
 (e) $x=1$, one deg-$4$ vertex. 
 (f) $x=2$, two deg-$1$ vertices. 
 (g) $x=2$, one deg-$2$ vertex and one deg-$1$ vertex. 
 (h) $x=2$, one deg-$3$ vertex and one deg-$1$ vertex.
 (i) $x=2$, one deg-$4$ vertex and one deg-$1$ vertex.
 (j) $x=2$, two deg-$2$ vertices.
 (k) $x=2$, one deg-$3$ vertex and one deg-$2$ vertex.
 (l) $x=2$, one deg-$4$ vertex and one deg-$2$ vertex.
 (m) $x=2$, two deg-$3$ vertices.
 (n) $x=2$, one deg-$4$ vertex and one deg-$3$ vertex.
 (o) $x=2$, two deg-$4$ vertices.
 (p) $x=3$, three deg-$2$ vertices.
 (q) $x=3$, one deg-$3$ vertex and two deg-$2$ vertices.
 (r) $x=3$, one deg-$4$ vertex and two deg-$2$ vertices.
 (s) $x=3$, two deg-$3$ vertices and one deg-$2$ vertex.
 (t) $x=3$, one deg-$4$ vertex, one deg-$3$ vertex, and one vertex of degree $2$.
 (u) $x=3$, two vertices of degree $4$ and one deg-$2$ vertex.
 (v) $x=3$, three deg-$3$ vertices.
 (w) $x=3$, one deg-$4$ vertex and two deg-$3$ vertices.
 (x) $x=3$, two deg-$4$ vertices and one deg-$3$ vertex.
 (y) $x=3$, three deg-$4$ vertices.
 (z) $x=4$, four deg-$3$ vertices.
 (aa) $x=4$, one deg-$4$ vertex and three deg-$3$ vertices.
 (ab) $x=4$, two deg-$4$ vertices and one deg-$3$ vertex.
 (ac) $x=4$, three deg-$4$ vertices and three deg-$3$ vertices.
 (ad) $x=4$, four deg-$4$ vertices.
 (ae) $x=5$, five deg-$4$ vertices.
 }
		\label{fig:planar-lb-5-4}
\end{figure}

	\textbf{Case 4:} $d=5$ and $k=4$. This case is similar to \textbf{Case 1}. Namely, we augment $G=\mathcal G_5$ to a $5$-regular graph by adding at most $7$ vertices, and several edges, to it. This is done by first adding edges between pairs of non-adjacent vertices whose degree is smaller than $5$. When this is not possible, the vertices of degree smaller than $5$ induce a clique $K_x$ in $\mathcal G_5$. Then it suffices to add at most $7$ vertices (and some further edges) to $\mathcal G_5$ in order to obtain a $5$-regular graph $\mathcal H_5$, see \cref{fig:planar-lb-5-4}. Every $h$-vertex $5$-regular graph has a dominating set of size at most $\frac{5}{14}h$~\cite{xing2006upper}, hence $\mathcal H_5$ has a dominating set $\mathcal D$ of size at most $\frac{5}{14}(n+7)$. Let $r_5$ be the number of vertices of $\mathcal G_5$ that are in $\mathcal D$. Removing such vertices from $\mathcal G_5$ results in a graph $\mathcal G_4$ with degree at most $4$. We thus get the following:
	\begin{equation}\label{eq:case4}
		r_5 \leq \frac{5}{14}(n + 7).
	\end{equation}
	The vertex set of $\mathcal G_4$ is our desired set $\I$, where $|\I| = n-r_5\geq \frac{9}{14}n-\frac{1}{2}$.


\textbf{Case 5:} $d\geq 6$ and $k=4$. The algorithm first removes $r_{6^+}$ vertices from $G$ obtaining $\mathcal G_5$, and then removes $r_5$ vertices from $\mathcal G_5$ resulting in $\mathcal G_4$, whose vertex set is $\I$. We upper bound $r_5 + r_{6^+}$ in two different ways. First, by \cref{eq:case4} and since the number of vertices of $\mathcal G_5$ is $n-r_{6^+}$, we get: 
\begin{equation}\label{eq:case5-1}
	r_5 + r_{6^+} \leq \frac{5}{14}(n - r_{6^+} + 7) + r_{6^+} = \frac{5n + 9r_{6^+}+35}{14}.
\end{equation}
Second, the number of vertices of degree $5$ that are removed from $\mathcal G_5$ is at most $\frac{1}{5}$ times the number of edges of $\mathcal G_5$, since the removal of a degree-$5$ vertex removes $5$ edges from $\mathcal G_5$. Also, the number of edges of $\mathcal G_5$ is at most the number $m$ of edges of $G$, minus $6$ times the number of vertices of degree at least $6$ that are removed from $G$. Hence, we get:
\begin{equation}\label{eq:case5-2}
	r_5 + r_{6^+} \leq \frac{m - 6r_{6^+}}{5} + r_{6^+} = \frac{m - r_{6^+}}{5}.
\end{equation}
We thus get:
\begin{equation}\label{eq:case5}
	r_5 + r_{6^+} \leq \frac{5}{59}n +\frac{9}{59}m + \frac{35}{59}.
\end{equation} 
\cref{eq:case5} comes from \cref{eq:case5-1} when $r_{6^+} \leq -\frac{25}{59}n + \frac{14}{59}m -\frac{175}{59}$ and from \cref{eq:case5-2} when $r_{6^+} \geq -\frac{25}{59}n + \frac{14}{59}m -\frac{175}{59}$. Therefore, it follows that $|\mathcal I|=n-r_5 - r_{6^+}\geq \frac{54}{59}n - \frac{9}{59}m - \frac{35}{59}$.

	
\textbf{Case 6:} $d\geq 6$ and $k\geq 5$. The number of vertices of degree at least $k+1$ that are removed from $G$ is at most $\frac{1}{k+1}$ times the number of edges of $G$, since the removal of a vertex with degree at least $k+1$ removes at least $k+1$ edges from $G$. Hence, we get:
\begin{equation}\label{eq:case6}
	\sum_{j=k+1}^d r_j  \leq \frac{m}{k+1}.
\end{equation}
Therefore, it follows that $|\mathcal I|=n-\sum_{j=k+1}^d r_j\geq n-\frac{m}{k+1}$.
\end{proof}

Since a planar graph has at most $3n-6$ edges, we get the following corollary.

\begin{corollary} \label{cor:planar-greedy}
For every~$n\geq 1$, for every planar graph~$G$ with~$n$ vertices, and for every integer~$k\geq 3$, there exists a set~$\mathcal I \subseteq V(G)$ such that~$G[\mathcal I]$ has degree at most~$k$ and such that (i) if~$k=3$, then~$|\mathcal I|\geq  \frac{5}{13}n+\frac{2}{13}$; (ii) if~$k=4$, then~$|\mathcal I|\geq\frac{27}{59}n+\frac{19}{59}$; and (iii) if~$k\geq 5$, then~$|\mathcal I|\geq \frac{k-2}{k+1}n+\frac{6}{k+1}$.	
\end{corollary}

Similarly, we can get better lower bounds for sparse families of planar graphs. For example, $n$-vertex planar graphs that admit a planar drawing in which every face has degree at least $g$ (these include planar graphs of girth at least $g$) have at most $\frac{g(n-2)}{g-2}$ edges by Euler's formula, and hence have a set $\mathcal I \subseteq V(G)$ such that $G[\mathcal I]$ has degree at most $k$ and such that $|\mathcal I|\geq \frac{(25g-60)n-14g+48}{39(g-2)}$, $|\mathcal I|\geq \frac{(45g-108)n -17g +70}{59(g-2)}$, and $|\mathcal I|\geq \frac{(gk -2k -2)n +2g}{(g-2)(k+1)}$ if $k=3$, $k=4$, or $k\geq 5$, respectively, by \cref{th:planar-lb}. As a notable implication, $n$-vertex bipartite planar graphs have a set $\mathcal I \subseteq V(G)$ such that $G[\mathcal I]$ has degree at most $3$ and such that $|\mathcal I|\geq \frac{20}{39}n-\frac{4}{39}$, which is strictly larger than the maximum size of an induced linear forest~\cite{DBLP:journals/dm/DrossMP19}.


\section{Conclusions}\label{se:conclusions}
In this paper, we initiated the study of large induced graphs of bounded degree in classes of planar graphs. For any integer $k\geq 3$, we presented upper and lower bounds for the value of the function $f_k(n)$ such that every $n$-vertex outerplanar or planar graph has an induced subgraph with degree at most $k$ and with $f_k(n)$ vertices. Our upper bounds are existential, while our lower bounds are universal and are based on polynomial-time algorithms that compute the desired graphs. An obvious direction for further research is to close the gaps between the upper and lower bounds we presented. See again \cref{ta:degree-3} in \cref{se:introduction} for a numerical comparison of our upper and lower bounds for the case $k=3$.

We believe that our techniques have the potential to yield further improvements on the considered problems, as explained below.

Let $c_k$ be the maximum value such that every $n$-vertex outerplanar graph $G$ has an induced subgraph with degree at most $k$ and with $c_k\cdot n +o(n)$ vertices. We believe that the use of \cref{le:decompose-outerplanar,le:tool-degree-k} that was done in order to prove \cref{th:outerplanar-lb-3,th:outerplanar-lb-k} could be stressed further, in order to prove a lower bound of the form $d_k\cdot n + O(1)$ for the size of a set $\mathcal I$ such that $G[\mathcal I]$ has degree at most $k$, where $d_k$ is arbitrarily close to $c_k$. Indeed, by employing ``very large'' values of $\ell$ in \cref{le:decompose-outerplanar}, the numerical effect of the constraint imposed by \cref{le:tool-degree-k} of excluding two vertices, namely $u$ and $v$, from the set of vertices $\mathcal I_H$ that induce a large subgraph $G[\mathcal I_H]$  with degree at most $k$ in the $uv$-split subgraph $H$ of $G$ becomes negligible. We plan to use this intuition to experimentally determine the value $c_k$, for small values of $k$, up to some decimal digits. In particular, it would be interesting to understand whether $c_k$ is equal to $\frac{k-1}{k}$, which would match our upper bound from \cref{th:outerplanar-ub}. \cref{th:outerplanar-lb-3} proves that $c_k=\frac{k-1}{k}$ is indeed the right bound for $k=3$. Also, we suspect that these techniques could be extended to find large induced subgraphs of bounded degree in graphs of bounded treewidth.

In order to find large induced subgraphs of degree at most $k$ in a planar graph $G$, we employed a  greedy algorithm which repeatedly removes a vertex with maximum degree, until the graph has degree at most $k$. For $k=3$ and $k=4$, we modified the algorithm so that the vertex removal stops when $G$ has degree at most $k+1$. Then $G$ is augmented so that it becomes $(k+1)$-regular. Finally, $G$ achieves degree at most $k$ by the removal of a small dominating set, for which we have good upper bounds on the minimum size \cite{liu2004domination,xing2006upper}. 

It is worth remarking that the greedy algorithm without the dominating set strategy still produces a lower bound better than $\lceil \frac{n}{3}\rceil$, which is the size of a linear forest that one is guaranteed to find in $G$ \cite{DBLP:journals/jgt/Poh90}. However, for the case in which $d=4$ and $k=3$, it only gives us an $\frac{n}{2}$ lower bound, see \cref{fig:upper-bound-greedy}, which is worse than the $\frac{7n}{11}$ lower bound we get by using the dominating set strategy, see \cref{th:planar-lb}; consequently, not employing the dominating set strategy leads to worse bounds also for $d=5$ and $d\geq 6$. 
\begin{figure}
	\centering
	\includegraphics[scale=.7, page=1]{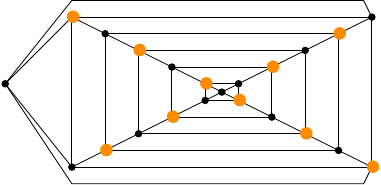}
	\caption{An $n$-vertex $4$-regular planar graph in which repeatedly removing degree-$4$ vertices might result in the removal of $\frac{n}{2}-O(1)$ vertices. The removed vertices are large disks.}
	\label{fig:upper-bound-greedy}
\end{figure}

We also ask whether one could obtain lower bounds better than ours for $k\geq 5$ by using known results on the size of dominating sets in $(k+1)$-regular graphs \cite{DBLP:books/daglib/0021015}.

Observe that the degree of a planar graph can be reduced to~$k$ by repeatedly applying the dominating set strategy, each time decreasing the degree of the current graph by $1$. This approach does not seem to lead to good bounds for our problem.

We remark that our greedy algorithm does not exploit the planarity of the graph, other than for bounding its number of edges. Hence, we find interesting to understand whether planarity can be exploited more deeply to get better bounds. In particular, lower bounds better than the ones presented in this paper could be obtained if one could prove that every planar graph with maximum degree $4$ or with maximum degree $5$ has a subset of vertices whose size is smaller than~$\frac{4n}{11}$ or~$\frac{5n}{14}$, respectively, such that every degree-$4$ vertex is in the set or adjacent to a vertex in the set.   

\vspace{3mm}
{\bf Acknowledgments.} Thanks to Ross J. Kang and Michael J. Pelsmajer for pointing out to us relevant literature for the problem we studied.


\bibliography{bibliography}

\end{document}